\newtheorem{theorem}{Theorem}
\newtheorem{lemma}{Lemma}
\newtheorem{definition}{Definition}
\newtheorem{corollary}{Corollary}
\newcommand{\co}{{\hspace{0.25mm}:\hspace{0.25mm}}}
\begin{document}
{\selectlanguage{english}
\binoppenalty = 10000 %
\relpenalty   = 10000 %

\pagestyle{headings} \makeatletter
\renewcommand{\@evenhead}{\raisebox{0pt}[\headheight][0pt]{\vbox{\hbox to\textwidth{\thepage\hfill \strut {\small Grigory. K. Olkhovikov}}\hrule}}}
\renewcommand{\@oddhead}{\raisebox{0pt}[\headheight][0pt]{\vbox{\hbox to\textwidth{{Completeness for implicit jstit logic}\hfill \strut\thepage}\hrule}}}
\makeatother

\title{A completeness result for implicit justification stit logic}
\author{Grigory K. Olkhovikov}
\date{}
\maketitle
\begin{quote}
\textbf{Abstract}. We present a completeness result for the
implicit fragment of justification stit logic introduced in
\cite{OLWA}. Although this fragment allows for no strongly
complete axiomatization, we show that a restricted form of strong
completeness (subsuming weak completeness) is available, as well
as deduce a version of restricted compactness property.
\end{quote}

\begin{quote}
stit logic, justification logic, completeness, compactness
\end{quote}

\section{Introduction}

Basic justification stit (or jstit, for short) logic was
introduced in \cite{OLWA} as an environment for analysis of
doxastic actions related to proving activity within a somewhat
idealized community of agents, combining expressive means of stit
logic by N. Belnap et al. \cite{belnap2001facing} with those of
justification logic by S. Artemov et al. \cite{ArtemovN05}. This
logic, therefore, retains the full set of expressive means of the
two above-mentioned logics and introduces some new expressive
means on top of them. These new expressive means were called in
\cite{OLWA} proving modalities and they capture different modes in
which one can speak about proving activity of an agent. The
general idea behind jstit logic is that one gets a right
classification of such modes if one intersects the distinction
between agentive and factual (aka moment-determinate) events
developed in stit logic with the distinction between explicit and
implicit modes of knowledge which is central to justification
logic. The first distinction, when applied to proofs, corresponds
to a well-known philosophical discussion of proofs-as-objects vs
proofs-as-acts. One refers to a proof-as-act when one says that
agent $j$ proves some proposition $A$, but one refers to a
proof-as-object when saying that $A$ was proved. While doing that,
one can either simply say that $A$ was proved, or add that $A$ was
proved by some proof $t$; and the difference between these two
modes of speaking is exactly the difference between implicit and
explicit reference to proofs. All in all this gives us the
following classification of proving modalities:

\begin{center}
 \begin{tabular}{|c||c|c|}
 \hline
  & Agentive & Moment-determinate \\
 \hline\hline
 Explicit & $j$ proves $A$ by $t$ & $A$ has been proven by $t$ \\
 & $Prove(j, t, A)$& $Proven(t,A)$\\
 \hline
 Implicit & $j$ proves $A$ & $A$ has been proven \\
 &$Prove(j, A)$& $Proven(A)$\\
 \hline
\end{tabular}
\end{center}

In \cite{OLWA} the semantics of these modalities was presented and
informally motivated in some detail. However, in the present
paper, we are going to look into one fragment of basic jstit logic
rather than the full system. The reason for this is the relatively
high level of complexity of the full basic jstit logic. The
fragment in question is, in fact, the basic jstit logic without
the two explicit proving modalities given in the first row of
table above. The resulting restricted system, therefore, features
the full set of expressive means inherited from justification
logic and stit logic plus the two implicit modalities,
$Prove(j,A)$ and $Proven(A)$. For the same reason (i.e. keeping
the complexity down), we also use a slightly simplified version of
the semantics introduced in \cite{OLWA} to interpret this logic.

The resulting system, which we will call the implicit jstit logic,
still allows for an analysis of the interplay between
proofs-as-acts and proofs-as-objects, although it limits the
format of such an analysis to some extent and also zeros out the
interplay between implicit and explicit modes of speech. But even
this restricted logic, has, as will be shown below, a challenging
degree of complexity, which makes the problem of axiomatizing it
both interesting and non-trivial.

The present paper is devoted to solving this exact problem. Its
layout is as follows. In Section \ref{basic} we define the
language and the semantics of the logic at hand. We also show some
features of implicit jstit logic, which limit the power and the
scope of possible completeness results, namely, the failure of
compactness and finite model properties. The latter fails in a
rather strong form; as a result, one cannot impose any finite
bound not only on the overall size of a model satisfying a given
formula, but also on the length of histories in such a model. The
failure of compactness also means that one cannot have a strongly
complete axiomatization for this logic while retaining a finitary
notion of proof.

Despite all these challenges, however, it turns out that with
implicit jstit logic one can do much better than just weak
completeness; in fact, our main result is much closer to the
strong completeness and only differs from the latter in that some
restrictions are imposed on proof variables occurring in a given
set of formulas. The exact formulation of this result is given in
Section \ref{axioms}, where we also define the axiom system which
displays this exact degree of completeness w.r.t. implicit jstit
logic. We immediately show this system to be sound w.r.t. the
semantics introduced in Section \ref{basic}, and we end the
section by proving a number of theorems in the system.

Section \ref{canonicalmodel} then contains the bulk of technical
work necessary for the completeness theorem. It gives a stepwise
construction and adequacy check for all the numerous components of
the canonical model and ends with a proof of a truth lemma.
Section \ref{main} then reaps the fruits of the hard work done in
Section \ref{canonicalmodel}, giving a concise proof of the
completeness result and drawing some quick corollaries including
the weak completeness theorem and a restricted form of compactness
property. Then follows Section \ref{conclusion}, giving some
conclusions and drafting directions for future work.

In what follows we will be assuming, due to space limitations, a
basic acquaintance with both stit logic and justification logic.
We recommend to peruse \cite{sep-logic-justification} for a quick
introduction into the basics of stit logic, and \cite[Ch.
2]{horty2001agency} for the same w.r.t. justification logic.

\section{Basic definitions and notation}\label{basic}

We fix some preliminaries. First we choose a finite set $Ag$
disjoint from all the other sets to be defined below. Individual
agents from this set will be denoted by letters $i$ and $j$. Then
we fix countably infinite sets $PVar$ of proof variables (denoted
by $x,y,z,w,u$) and $PConst$ of proof constants (denoted by
$a,b,c,d$). When needed, subscripts and superscripts will be used
with the above notations or any other notations to be introduced
in this paper. Set $Pol$ of proof polynomials is then defined by
the following BNF:
$$
t := x \mid c \mid s + t \mid s \times t \mid !t,
$$
with $x \in PVar$, $c \in PConst$, and $s,t$ ranging over elements
of $Pol$. In the above definition $+$ stands for the \emph{sum} of
proofs, $\times$ denotes \emph{application} of its left argument
to the right one, and $!$ denotes the so-called
\emph{proof-checker}, so that $!t$ checks the correctness of proof
$t$.

In order to define the set $Form$ of formulas we fix a countably
infinite set $Var$ of propositional variables to be denoted by
letters $p,q,r,s$. Formulas themselves will be denoted by letters
$A,B,C,D$, and the definition of $Form$ is supplied by the
following BNF:
\begin{align*}
A := p \mid A \wedge B \mid \neg A \mid [j]A \mid \Box A \mid t\co
A \mid KA \mid Prove(j, A) \mid Proven(A),
\end{align*}
with $p \in Var$, $j \in Ag$ and $t \in Pol$.

It is clear from the above definition of $Form$ that we are
considering a version of modal propositional language. As for the
informal interpretations of modalities, $[j]A$ is the so-called
cstit action modality and $\Box$ is historical necessity modality,
both modailities are borrowed from stit logic. The next two
modailities, $KA$ and $t\co A$, come from justification logic and
the latter is interpreted as ``$t$ proves $A$'', whereas the
former is the strong epistemic modality ``$A$ is
known''.\footnote{Perhaps, ``$A$ is provable'' will be an even
better reading.} The two remaining modalities, $Prove(j, A)$ and
$Proven(A)$ are implicit modalities related to the proving
activity of agents and their informal interpretation was
considered in Section 1.

We assume $\Diamond$, $\langle K \rangle$, and $\langle j \rangle$ for a $j \in Ag$
as notations for the dual modalities of $\Box$, K and $[j]$, respectively.

For the language at hand, we assume the following semantics. A
jstit model is a structure
$$
\mathcal{M} = \langle Tree, \leq, Choice, Act, R, \mathcal{E},
V\rangle
$$
such that:
\begin{itemize}
\item $Tree$ is a non-empty set. Elements of $Tree$ are called
\emph{moments}.

\item $\leq$ is a partial order on $Tree$ for which a temporal
interpretation is assumed.

\item $Hist$ is the set of maximal chains in $Tree$ w.r.t. $\leq$.
Since $Hist$ is completely determined by $Tree$ and $\leq$, it is
not included into the structure of a model as a separate
component. Elements of $Hist$ are called \emph{histories}. The set
of histories containing a given moment $m$ will be denoted $H_m$.
The following set:
$$
MH(\mathcal{M}) = \{ (m,h)\mid m \in Tree,\, h \in H_m \},
$$
called the set of \emph{moment-history pairs}, will be used to
evaluate formulas of the above language.

\item $Choice$ is a function mapping $Tree \times Agent$ into
$2^{2^{Hist}}$ in such a way that for any given $j \in Agent$ and
$m \in Tree$ we have as $Choice(m,j)$ (to be denoted as
$Choice^m_j$ below) a partition of $H_m$. For a given $h \in H_m$
we will denote by $Choice^m_j(h)$ the element of partition
$Choice^m_j$ containing $h$.

\item $Act$ is a function mapping $MH(\mathcal{M})$ into
$2^{Pol}$.

\item $R$ is a pre-order on $Tree$ called epistemic
accessibility.

\item $\mathcal{E}$ is a function mapping $Tree \times Pol$ into
$2^{Form}$.

\item $V$ is an evaluation function, mapping the set $Var$ into
$2^{MH(\mathcal{M})}$.
\end{itemize}

However, not all structures of the above described type are
admitted as jstit models. A number of additional restrictions
needs to be satisfied. More precisely, we assume satisfaction of
the following constraints:

\begin{enumerate}
\item Historical connection:
$$
(\forall m,m_1 \in Tree)(\exists m_2
\in Tree)(m_2 \leq m \wedge m_2 \leq m_1).
$$

\item No backward branching:
$$
(\forall m,m_1,m_2 \in Tree)((m_1 \leq m \wedge m_2 \leq m) \to
(m_1 \leq m_2 \vee m_2 \leq m_1)).
$$

\item No choice between undivided histories:
$$
(\forall m,m' \in Tree)(\forall h,h' \in H_m)(m < m' \wedge m' \in
h \cap h' \to Choice^m_j(h) = Choice^m_j(h'))
$$
for every $j \in Agent$.

\item Independence of agents:
$$
(\forall m\in Tree)(\forall f:Agent \to 2^{H_m})((\forall j \in
Agent)(f(j) \in Choice^m_j) \Rightarrow \bigcap_{j \in Agent}f(j)
\neq \emptyset).
$$

\item Monotonicity of evidence:
$$
(\forall t \in Pol)(\forall m,m' \in Tree)(R(m,m') \Rightarrow
\mathcal{E}(m,t) \subseteq \mathcal{E}(m',t)).
$$

\item Evidence closure properties. For arbitrary $m \in Tree$,
$s,t \in Pol$ and $A, B \in Form$ it is assumed that:
\begin{enumerate}
\item $A \to B \in \mathcal{E}(m,s) \wedge A \in \mathcal{E}(m,t)
\Rightarrow B \in \mathcal{E}(m,s\times t)$;

\item $\mathcal{E}(m,s) \cup \mathcal{E}(m,t) \subseteq
\mathcal{E}(m,s + t)$.
\item $A \in \mathcal{E}(m,t) \Rightarrow
t:A \in \mathcal{E}(m,!t)$;
\end{enumerate}

\item Expansion of presented proofs:
$$
(\forall m,m' \in Tree)(m' < m \Rightarrow \forall h \in H_m
(Act(m',h) \subseteq Act(m,h))).
$$

\item No new proofs guaranteed:
$$
(\forall m \in Tree)(\bigcap_{h \in H_m}(Act(m,h)) \subseteq
\bigcup_{m' < m, h \in H_m}(Act(m',h))).
$$

\item Presenting a new proof makes histories divide:
$$
(\forall m \in Tree)(\forall h,h' \in H_m)(\exists m' > m(m' \in h
\cap h') \Rightarrow (Act(m,h) = Act(m,h'))).
$$

\item Future always matters:
$$
\leq \subseteq R.
$$

\item Presented proofs are epistemically transparent:
$$
(\forall m,m' \in Tree)(R(m,m') \Rightarrow (\bigcap_{h \in
H_m}(Act(m,h)) \subseteq \bigcap_{h' \in H_{m'}}(Act(m',h')))).
$$
\end{enumerate}
We offer some intuitive explanation for the above defined notion
of jstit model. Due to space limitations, we only explain the
intuitions behind jstit models very briefly, and we urge the
reader to consult \cite[Section 3]{OLWA} for a more comprehensive
explanations, whenever needed.

The components like $Tree$, $\leq$, $Choice$ and $V$ are inherited
from stit logic, whereas $R$ and $\mathcal{E}$ come from
justification logic. The only new component is $Act$. The
intuition behind the semantics is that $Ag$, our community of
agents, is engaged in proving activity and this proving activity
consists in making proof polynomials public within the community.
One can think of a group of researchers, assembled before a
whiteboard in a conference room and putting the proofs they
discover on this whiteboard. Function $Act$ gives out the current
state of this whiteboard at any given moment under any given
history. The whole situation is somewhat idealized in that we
assume that nothing ever gets erased from the whiteboard, that
there is always enough free space on it, and that the agents do
not send one another any private messages.

The numbered list of semantical constraints above then just builds
on these intuitions. Constraints $1$--$4$ are borrowed from stit
logic, constraints $5$ and $6$ are inherited from justification
logic. Constraint $7$ just says that nothing gets erased from the
whiteboard, constraint $8$ says a new proof cannot spring into
existence as a static (i.e. moment-determinate) feature of the
environment out of nothing, but rather has to come as a result (or
a by-product) of a previous activity. Constraint $9$ is just a
corollary to constraint $3$ in the richer environment of jstit
models, constraint $10$ says that the possible future of the given
moment is always epistemically relevant in this moment, and
constraint $11$ says that the community knows everything that has
firmly made its way onto the whiteboard.

For the members of $Form$, we will assume the following
inductively defined satisfaction relation. For every jstit model
$\mathcal{M} = \langle Tree, \leq, Choice, Act, R, \mathcal{E},
V\rangle$ and for every $(m,h) \in Pair_\mathcal{M}$ we stipulate
that:

\begin{align*}
&\mathcal{M}, m, h \models p \Leftrightarrow (m,h) \in
V(p);\\
&\mathcal{M}, m, h \models [j]A \Leftrightarrow (\forall h'
\in Choice^m_j(h))(\mathcal{M}, m, h' \models A);\\
&\mathcal{M}, m, h \models \Box A \Leftrightarrow (\forall h'
\in H_m)(\mathcal{M}, m, h' \models A);\\
&\mathcal{M}, m, h \models KA \Leftrightarrow \forall m'\forall
h'(R(m,m') \& h' \in H_{m'} \Rightarrow \mathcal{M}, m', h'
\models A);\\
&\mathcal{M}, m, h \models t\co A \Leftrightarrow A \in
\mathcal{E}(m,t) \& \mathcal{M}, m, h \models KA;\\
&\mathcal{M}, m, h \models Prove(j, A) \Leftrightarrow (\forall h'
\in Choice^m_j(h))(\exists t \in Act(m,h'))(\mathcal{M},m,h'
\models t\co A)
\&\\
&\qquad\qquad\qquad\qquad\qquad \&(\forall s \in Pol)(\exists
h'' \in H_m) (\mathcal{M},m,h \models s\co A \Rightarrow s \notin
Act(m,h''));\\
&\mathcal{M}, m, h \models Proven(A) \Leftrightarrow (\exists t
\in Pol)(\forall h' \in H_m) (t \in Act(m,h') \& \mathcal{M},
m, h \models t\co A)
\end{align*}

In the above clauses we assume that $p \in Var$; we also assume
standard clauses for Boolean connectives. Note that the
satisfaction clause for $Prove(j,A)$ consists of two conjuncts,
one stating that some proof of $A$ must be presented at every
history in a given choice cell, and the other saying that no proof
of $A$ is presented in all histories through the given moment.
These conjuncts show some similarity to the conjuncts in the
satisfaction clause for the \emph{dstit}
 operator, which are known in the existing literature under the names of \emph{positive} and \emph{negative}
 condition, respectively. Following this usage, we will
 name the first conjunct in the satisfaction clause for $Prove(j,A)$
 the positive condition for $Prove(j,A)$, and the second one
 the negative condition for $Prove(j,A)$. The intuitive motivation
 for the satisfaction clauses of $Prove(j,A)$ and $Proven(A)$ was
 worked out in detail in \cite{OLWA} and we do not dwell on it
 here.

 We further assume standard definitions for satisfiability and
 validity of formulas and sets of formulas in the presented
 semantics.

 Before we proceed to proving things about the defined system, we
 want to briefly comment on how the above semantics relates to the
 semantics introduced in \cite{OLWA}. The main difference is that
 the latter semantics uses two epistemic accessibility relations $R$ and
 $R_e$ with the constraint that $R
\subseteq R_e$, whereas in the jstit models as defined above one
only finds one such relation $R$, and this relation serves the
functions of both $R$ and $R_e$. Thus the semantics defined above
arises from the more general semantics presented in \cite{OLWA} as
a particular case with $R$ and $R_e$ being identified with one
another.

The exact import of this additional restriction on the semantics
presented in \cite{OLWA} is not yet clear. It is known that on the
level of pure justification logic identifying $R$ and $R_e$ does
not change the set of validities (see, e.g. \cite[Comment
6.5]{ArtemovN05}). Our tentative hypothesis would be, then, that
imposing $R = R_e$ in the richer context of jstit logic might be
just as irrelevant as it is in justification logic. However, we
have no proof of this hypothesis at the moment, so it stands as an
open problem.

The semantics just defined admits of no finitary strongly complete
system since it is not compact. Indeed, the set
$$
\{ Proven(p) \} \cup \{ \neg t\co p\mid t \in Pol \}
$$
is unsatisfiable, even though every finite subset of it can be
satisfied. Still, the main result of this paper shows that we can
do better than just weak completeness; in fact we can show that
also infinite consistent sets of formulas can be satisfied
provided that there is an infinite set of proof variables that do
not occur in those formulas. Thus we get something considerably
stronger than just weak completeness including also a restricted
form of the compactness theorem.

It is also worth noting that under the presented semantics some
satisfiable formulas cannot be satisfied over finite models. As an
example of this phenomenon, consider $K(\Diamond p \wedge
\Diamond\neg p)$. If $\mathcal{M}, m_1, h \models K(\Diamond p
\wedge \Diamond\neg p)$, then, by reflexivity of $R$, also
$\mathcal{M}, m_1, h \models \Diamond p \wedge \Diamond\neg p$,
which means that at least two different histories are running
through $m_1$ in $\mathcal{M}$. Therefore, $m_1$ cannot be a
$\leq$-maximal moment in $\mathcal{M}$, so that there is at least
one moment $m_2 \in h$ such that $m_1 < m_2$. By the future always
matters constraint we get then that $R(m_1, m_2)$, which, by
transitivity of $R$, means that we also have $\mathcal{M}, m_2, h
\models K(\Diamond p \wedge \Diamond\neg p)$. Iterating this
construction $\omega$ times, we get a countably infinite sequence
of moments along $h$:
$$
m_1 < m_2 <\ldots < m_n <\ldots,
$$
showing that the moments in these sequence are pairwise different
(by antisymmetry of $\leq$) and that $\mathcal{M}$ is consequently
an infinite model. Since $\mathcal{M}$ was chosen arbitrarily,
this shows that $K(\Diamond p \wedge \Diamond\neg p)$ cannot be
satisfied over finite jstit models. On the other hand, $K(\Diamond
p \wedge \Diamond\neg p)$ is clearly satisfiable when one allows
for infinite models. One can consider, for example, a jstit model
$\mathcal{M} = \langle Tree, \leq, Choice, Act, R, \mathcal{E},
V\rangle$ for a community $\{ j \}$ consisting of a single agent,
setting:
$$
Tree: = \{ (a_1,\ldots,a_n)\mid a_i \in \{ 0, 1 \}\textup{ for } i
\leq n \} \cup \{\Lambda \},
$$
where $\Lambda$ is the empty sequence;
$$
(a_1,\ldots,a_n) \leq (b_1,\ldots,b_k) \Leftrightarrow (n \leq k
\& (\forall i \leq n)(a_i = b_i)),\,\,Choice^m_j := H_m,\,\,
Act(m, h) = \emptyset
$$
for every $m \in Tree$ and $h \in H_m$;
$$
R := \leq, \mathcal{E}(m, t) := Form,
$$
for every $m \in Tree$ and $t \in Pol$;
$$
V(p) := \{ (m,h) \mid (m, 1) \in h \}, V(q) = \emptyset
$$
provided $q \in Var \setminus \{ p \}$. It is straightforward to
check then that with these settings we get that $\mathcal{M},
\Lambda, h \models K(\Diamond p \wedge \Diamond\neg p)$ for an
arbitrary history $h$ over $\mathcal{M}$.

Note also, that the same example shows that one cannot put a
finite bound on the length of histories in the models satisfying a
given formula, so that what one might have called a ``finite
history property'' which is satisfied, e.g., by the canonical
model of the logic of dstit operator (see \cite[Section
17C]{belnap2001facing} for the definition) also fails for the
implicit jstit logic.

\section{Axiomatic system and soundness}\label{axioms}

We consider the following set of axiomatic schemes:

\begin{align}
&\textup{A full set of axioms for classical propositional
logic}\label{A0}\tag{\text{A0}}\\
&\textup{$S5$ axioms for $\Box$ and $[j]$ for every $j \in
Agent$}\label{A1}\tag{\text{A1}}\\
&\Box A \to [j]A \textup{ for every }j \in Agent\label{A2}\tag{\text{A2}}\\
&(\Diamond[j_1]A_1 \wedge\ldots \wedge \Diamond[j_n]A_n) \to
\Diamond([j_1]A_1 \wedge\ldots \wedge[j_n]A_n)\label{A3}\tag{\text{A3}}\\
&(s\co(A \to B) \to (t\co A \to (s\times t)\co
B)\label{A4}\tag{\text{A4}}\\
&t\co A \to (!t\co(t\co A) \wedge KA)\label{A5}\tag{\text{A5}}\\
&(s\co A \vee t\co A) \to (s+t)\co A\label{A6}\tag{\text{A6}}\\
&\textup{$S4$ axioms for $K$}\label{A7}\tag{\text{A7}}\\
&KA \to \Box K\Box A\label{A8}\tag{\text{A8}}\\
&Prove(j, A) \to (\neg Proven(A) \wedge [j]Prove(j, A) \wedge KA)\label{A9}\tag{\text{A9}}\\
&\Box Prove(j, A) \to \Box Prove(i, A)\label{A10}\tag{\text{A10}}\\
&Proven(A) \to (KProven(A) \wedge KA)\label{A11}\tag{\text{A11}}\\
&\neg K(\bigvee^{n}_{l = 1}\langle K \rangle\Diamond Prove(j_{l},
A_{l}))\label{A12}\tag{\text{A12}}\\
&\neg Prove(j,A) \to \langle j \rangle (\bigwedge_{i \in Ag}\neg Prove(i,A))\label{A13}\tag{\text{A13}}
\end{align}

The assumption is that in \eqref{A3} $j_1,\ldots, j_n$ are
pairwise different.

To this set of axiom schemes we add the following rules of
inference:
\begin{align}
&A, A \to B \Rightarrow B;\label{R1}\tag{\text{R1}}\\
&A\Rightarrow KA;\label{R2}\tag{\text{R2}}\\
&\textup{If $A$ is an instance of (A0)--(A13) and $c \in Const$,
then infer $c\co A$;}\label{R3}\tag{\text{R3}}\\
&KA \to (\neg Proven(B_1) \vee\ldots \vee\neg
Proven(B_n)) \Rightarrow\notag\\
&\qquad\qquad\Rightarrow KA \to (\bigwedge_{j \in Ag}\neg
Prove(j,B_1) \vee\ldots \vee \bigwedge_{j \in Ag}\neg
Prove(j,B_n)).\label{R4}\tag{\text{R4}}
\end{align}

We call a jstit model $\mathcal{M} = \langle Tree, \leq, Choice,
Act, R, \mathcal{E}, V\rangle$ \emph{normal} iff the following
condition is satisfied:
\begin{align*}
(\forall c \in Const)(\forall m \in Tree)(\{ A \mid &A\text{ is a
substitution instance}\\
&\text{of one of the schemes among \eqref{A1}--\eqref{A13}}\}
\subseteq \mathcal{E}(m,c)).
\end{align*}

Our goal is now a restricted completeness theorem w.r.t. the class
of normal models. We start by establishing soundness, and we
precede the soundness theorem with the following rather
straightforward technical claim:
\begin{lemma}\label{determinate}
For every $A \in Form$ and every $t \in Pol$, all of the formulas
$\Box A$, $KA$, $t\co A$ and $Proven(A)$ are moment-determinate,
that is to say, if $\alpha \in \{ \Box A, KA, t\co A,Proven(A)
\}$, then for an arbitrary normal jstit model $\mathcal{M} =
\langle Tree, \leq, Choice, Act, R, \mathcal{E}, V\rangle$ and $m
\in Tree$, if $h, h' \in H_m$, then:
$$
\mathcal{M},m,h \models \alpha \Leftrightarrow \mathcal{M},m,h'
\models \alpha.
$$
Also, Boolean combinations of these formulas are
moment-determinate.
\end{lemma}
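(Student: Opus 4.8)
The plan is to verify moment-determinacy directly from the satisfaction clauses, treating the four atomic shapes separately and then closing off the Boolean case by a trivial induction. The governing observation is that in each of the relevant clauses the history parameter standing to the left of $\models$ either fails to occur on the right-hand side at all, or occurs only inside a subcontext that has already been shown to be moment-determinate. Since nothing in the argument uses the normality condition, I would in fact record that the statement holds for every jstit model, the restriction to normal models being immaterial here.

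First I would dispatch $\Box A$ and $KA$. Reading off the clause for $\Box A$, the condition $(\forall h' \in H_m)(\mathcal{M}, m, h' \models A)$ mentions only $m$ and a bound history variable $h'$; the free history $h$ does not appear, so the right-hand side takes the same value for every $h \in H_m$. The clause for $KA$ is identical in this respect: its body $\forall m' \forall h'(R(m,m') \,\&\, h' \in H_{m'} \Rightarrow \mathcal{M}, m', h' \models A)$ depends on $m$ only through $R(m,m')$ and on the bound variables $m', h'$, never on $h$. Hence both are moment-determinate. The case of $t\co A$ is then immediate: its clause is the conjunction of $A \in \mathcal{E}(m,t)$, which refers to $m$ and $t$ alone, with $\mathcal{M}, m, h \models KA$, whose moment-determinacy has just been established.

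For $Proven(A)$ I would make explicit the one mild point of care. Its clause reads $(\exists t)(\forall h' \in H_m)(t \in Act(m,h') \,\&\, \mathcal{M}, m, h \models t\co A)$, where the conjunct $\mathcal{M}, m, h \models t\co A$ sits syntactically under the $\forall h'$ quantifier yet refers to the free history $h$ rather than to $h'$. Since that conjunct does not depend on $h'$, it may be factored out of the quantifier, leaving the equivalent condition that there is a $t$ with $\mathcal{M}, m, h \models t\co A$ and $t \in Act(m,h')$ for all $h' \in H_m$. The second part involves only $m$ and $h'$, while the first part, being of the form $t\co A$, is moment-determinate by the preceding case; thus the whole is independent of the choice of $h \in H_m$.

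It remains to handle Boolean combinations, which is routine: the class of formulas whose truth at $(m,\cdot)$ is constant across $H_m$ is plainly closed under $\neg$ and $\wedge$, so any Boolean combination of the four moment-determinate shapes is again moment-determinate by induction on its construction. The only step requiring any attention at all is the reshuffling of quantifiers in the $Proven(A)$ clause; everything else is direct inspection, which is why the authors rightly flag the claim as straightforward.
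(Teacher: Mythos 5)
Your proposal is correct and follows essentially the same route as the paper's own proof: direct inspection of the satisfaction clauses, noting that $h$ either does not occur freely or occurs only inside an already-established moment-determinate context ($KA$ inside $t\co A$, and $t\co A$ inside $Proven(A)$), followed by the routine Boolean closure. Your two extra touches --- explicitly factoring the $h$-dependent conjunct out of the $\forall h'$ quantifier in the $Proven(A)$ clause, and observing that normality is never used --- are harmless refinements of the same argument rather than a different approach.
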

\begin{proof}
For $\alpha = \Box A$ and $\alpha = KA$ it suffices to note that
the semantical conditions for satisfaction of $KA$ and $\Box A$ at
a given $(m,h) \in MH(\mathcal{M})$ in a given $\mathcal{M}$ have
no free occurrences of $h$. When we turn, further, to the
corresponding condition for $t\co A$, the only free occurrence of
$h$ will be within the context $\mathcal{M}, m, h \models KA$
which was shown to be moment-determinate. Similarly, in the
satisfaction condition for $Proven(A)$ the only free occurrence of
$h$ is within a moment determinate context $\mathcal{M}, m, h
\models t\co A$.

Of course, Boolean combinations of moment-determinate formulas
must be moment-determinate, too.
\end{proof}

It follows from Lemma \ref{determinate}, that the truth of
moment-determinate formulas at a given moment-history pair only
depends on the moment, so that we might as well omit the histories
when discussing satisfaction of such formulas and write
$\mathcal{M}, m \models KA$ instead of $\mathcal{M}, m, h \models
KA$, etc.

Establishing soundness mostly reduces to a routine check that
every axiom is valid and that rules preserve validity. We treat
the less obvious cases in some detail:

\begin{theorem}\label{soundness}
Every instance of \eqref{A1}--\eqref{A13} is valid over the
class of normal jstit models. Every application of rules
\eqref{R1}--\eqref{R4} to formulas which are valid over the class of normal jstit models
yileds a formula which is valid over the class of normal jstit models.
\end{theorem}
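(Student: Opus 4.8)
The plan is to verify validity of each axiom scheme and validity-preservation of each inference rule separately, treating the stit-inherited and justification-inherited parts briefly (as they follow standard patterns) and concentrating on the genuinely new material, namely the axioms and rules governing the implicit proving modalities \eqref{A9}--\eqref{A13} and rule \eqref{R4}. Throughout I would freely use Lemma \ref{determinate} to drop the history argument whenever a subformula is moment-determinate, and I would repeatedly appeal to the semantic clauses together with the eleven frame constraints.

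First I would dispatch the easy blocks. Axioms \eqref{A0}, \eqref{A1}, \eqref{A2} are the familiar $S5$/inclusion principles and follow from the fact that $Choice^m_j(h) \subseteq H_m$ and that each $Choice^m_j$ partitions $H_m$; \eqref{A3} is the independence-of-agents principle and I would derive it directly from constraint $4$ by choosing, for each $j_l$, a witnessing choice cell and invoking the nonempty-intersection condition. The justification-logic axioms \eqref{A4}--\eqref{A7} follow from the evidence-closure properties (constraint $6$, clauses (a)--(c)), monotonicity of evidence (constraint $5$), and the $S4$ behaviour of $R$ as a pre-order; for the $KA$ conjunct in \eqref{A5} I would use the semantic clause for $t\co A$, which builds $KA$ in by definition. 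Axiom \eqref{A8} uses constraint $10$ (future always matters, $\leq\subseteq R$) together with transitivity of $R$. For \eqref{A9} and \eqref{A11} I would read off the three (resp. two) conjuncts directly: the negative condition inside the $Prove(j,A)$ clause yields $\neg Proven(A)$, the positive condition restricted within a choice cell yields $[j]Prove(j,A)$ after checking the negative condition is itself moment-determinate, and $KA$ comes from the fact that a presented proof $t$ of $A$ satisfies $t\co A$ hence $KA$; for \eqref{A11} the epistemic transparency constraint $11$ propagates the presented proof along $R$, giving $KProven(A)$. Axiom \eqref{A10} I would obtain by noting that $\Box Prove(j,A)$ forces, at every history, a presented proof of $A$ in every choice cell, whence at every history of $H_m$ such a proof is present; combined with constraint $9$ this makes the relevant condition agent-independent, yielding $\Box Prove(i,A)$.

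The genuinely delicate items are \eqref{A12}, \eqref{A13}, and \eqref{R4}, and I expect \eqref{R4} to be the main obstacle. For \eqref{A13} I would argue contrapositively using constraint $9$ (presenting a new proof makes histories divide): if $j$ does not prove $A$ at $(m,h)$, then either the positive or the negative condition fails, and I would show a choice cell for $j$ can be selected along which no agent proves $A$, using the divide-histories constraint to control where proofs are presented. For \eqref{A12}, whose validity says a certain $K$-boxed disjunction of diamond-prove formulas is never knowable, I would suppose for contradiction that it held at some $(m,h)$ and use the negative condition built into each $Prove(j_l,A_l)$ together with the epistemic clause for $K$ to force, at every $R$-accessible moment, a presented proof that the same negative condition forbids, deriving a contradiction. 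The rule \eqref{R4} is the hard part: its validity-preservation is not a pointwise matter but a global one, connecting $Proven$ with the conjunctions $\bigwedge_{j}\neg Prove(j,B_k)$. To handle it I would assume the premise is valid, fix a model and a pair $(m,h)$ satisfying $KA$, and argue that if all the negative-condition failures forced $Proven(B_k)$ to hold at the relevant moments, the premise would be contradicted; the core of the argument is to show that at a moment where $KA$ holds but every $\bigwedge_j\neg Prove(j,B_k)$ fails, one can locate, using constraints $9$ and $11$ together with the positive and negative conditions, an $R$-accessible moment witnessing $Proven(B_k)$ for each $k$, thereby contradicting validity of the premise. Making this witness-extraction precise, and in particular reconciling the choice-cell quantifier in the positive condition of $Prove$ with the all-histories quantifier in $Proven$, is where the real work lies.
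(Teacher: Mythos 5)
Your overall decomposition matches the paper's: dispatch \eqref{A0}--\eqref{A3} via the stit reduct, \eqref{A4}--\eqref{A7} and \eqref{R1}--\eqref{R3} via the justification reduct of a normal jstit model, and concentrate on \eqref{A8}--\eqref{A13} and \eqref{R4}. Most of your individual sketches are sound, with two small spurious appeals: neither \eqref{A10} nor \eqref{A13} needs constraint 9. For \eqref{A10} it suffices that $\Box Prove(j,A)$ puts a presented proof of $A$ on \emph{every} history through $m$ (because each history $h''$ lies in $Choice^m_j(h'')$), and for \eqref{A13} the history $g$ witnessing the failure of $j$'s positive condition lies in $Choice^m_i(g)$ for every agent $i$, so every agent's positive condition fails at $(m,g)$; no appeal to how proofs divide histories is required.

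The genuine gap is \eqref{A12}. Your proposed mechanism --- to ``force, at every $R$-accessible moment, a presented proof that the same negative condition forbids'' --- cannot produce a contradiction, because the positive and negative conditions of $Prove(j_l,A_l)$ are jointly satisfiable at a single moment: the positive condition puts a proof on the histories of one choice cell, while the negative condition only denies that it sits on \emph{all} histories through that moment. That joint satisfiability is exactly what it means for $Prove(j_l,A_l)$ to be true, so no pointwise clash exists. The paper's argument instead moves \emph{forward in time}: the witnessing moment $m'$ is first shown to have a $<$-successor $m''$ along the witnessing history (this preliminary step itself uses the positive/negative interplay), and then constraint 9 (histories through $m''$ are undivided at $m'$, hence share $Act$ at $m'$), constraint 7 (expansion of presented proofs) and constraint 10 (future always matters) turn the cell-presented proof into an all-histories-presented proof at $m''$, so that $Proven(A_l)$ holds at $m''$; then \eqref{A9}, \eqref{A11} and \eqref{A8} make $\neg Prove(j_l,A_l)$ settled and known at $m''$, contradicting the original $K$-claim via transitivity of $R$. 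Moreover this refutes only a \emph{single} disjunct: for $n>1$ the disjunction could survive at $m''$ through another disjunct, which is why the paper proceeds by induction on $n$, using the knocked-out disjunct to pass to $n-1$. Your sketch contains neither the forward move nor the induction. For \eqref{R4} you do state the correct strategy (extract an $R$-accessible moment where all the $Proven(B_k)$ hold together with $KA$), but you explicitly leave the witness extraction open; in the paper it is carried out by exactly the same forward-in-time construction, applied to $t_1,\ldots,t_n$ simultaneously along the single history $h$. So your plan is incomplete precisely at the one hard step that \eqref{A12} and \eqref{R4} share.
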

\begin{proof}
First, note that if $\mathcal{M} = \langle Tree, \leq, Choice,
Act, R, \mathcal{E}, V\rangle$ is a normal jstit model, then
$\langle Tree, \leq, Choice, V\rangle$ is a model of stit logic.
Therefore, axioms \eqref{A0}--\eqref{A3}, which were copy-pasted
from the standard axiomatization of \emph{dstit} logic (see, e.g.
\cite[Ch. 17]{belnap2001facing}) must be valid. Second, note that
if $\mathcal{M} = \langle Tree, \leq, Choice, Act, R, \mathcal{E},
V\rangle$ is a normal jstit model, then $\langle Tree, \leq, Act,
R, \mathcal{E}\rangle$ is what is called in \cite[p.
1067]{ArtemovN05} a frame for a Fitting justification model with
the form of constant specification defined by
\eqref{R3}\footnote{But note, that in \cite{ArtemovN05} they do
not include $\mathcal{E}$ in justification frames; however, this
is of no consequence for the present setting.}. This means that
also all of the \eqref{A4}--\eqref{A7} must be valid, whereas
\eqref{R1}--\eqref{R3} must preserve validity. The validity of
other elements of the above-presented axiomatic system will be
motivated below in some detail. In what follows, $\mathcal{M} =
\langle Tree, \leq, Choice, Act, R, \mathcal{E}, V\rangle$ will
always stand for an arbitrary normal jstit model, and $(m,h)$ for
an arbitrary element of $MH(\mathcal{M})$.

As for \eqref{A8}, assume for \emph{reductio} that $\mathcal{M}, m
\models KA \wedge \neg\Box K\Box A$. Then $\mathcal{M}, m, h
\models KA$ and also $\mathcal{M}, m \not\models \Box K\Box A$.
The latter means that for some $h' \in H_m$ we have $\mathcal{M},
m \not\models K\Box A$. Therefore, there must be some $m' \in
Tree$ such that $R(m,m')$ and some $g \in H_{m'}$ such that
$\mathcal{M}, m', g \not\models \Box A$, whence for some $g' \in
H_{m'}$ we will have $\mathcal{M}, m', g' \not\models A$. Since
$R(m,m')$, this means that $KA$ must fail at $(m,h)$ in
$\mathcal{M}$, a contradiction.

We consider next \eqref{A9}. Assume that $Prove(j, A)$ is true at
$(m,h)$ in $\mathcal{M}$. Note that the negative condition for
$Prove(j, A)$ at $(m,h)$ is logically equivalent to the negation
of the satisfaction condition for $Proven(A)$, which means that
$\neg Proven(A)$ must be  true at $(m,h)$ in $\mathcal{M}$.
Further, since clearly $h \in Choice^m_j(h)$ and thus
$Choice^m_j(h)$ cannot be empty, it follows from the positive
condition for $Prove(j, A)$ that for some $t \in Pol$ we will have
$\mathcal{M},m,h \models t\co A$, and therefore, by validity of
\eqref{A5}, $\mathcal{M},m,h \models KA$. Finally, note that since
$Choice^m_j$ is a partition of $H_m$, then for any $h' \in
Choice^m_j(h)$, if $h'' \in Choice^m_j(h')$, then $h'' \in
Choice^m_j(h)$. Therefore, since the positive condition for
$Prove(j,A)$ is satisfied at $(m,h)$, there must be some $t \in
Pol$ such that both $t \in Act(m,h'')$ and $\mathcal{M},m\models
t\co A$. Therefore, the positive condition for $Prove(j,A)$ will
be satisfied at $(m,h')$ for every $h' \in Choice^m_j(h)$. As for
the negative condition, recall that it is equivalent to the
negation of the satisfaction condition for $Proven(j,A)$ and the
latter is, by Lemma \ref{determinate}, moment-determinate.
Therefore, the negative condition for $Prove(j,A)$ must be
moment-determinate as well, and, once satisfied at a given
$(m,h)$, it will be satisfied at every history through $m$.
Therefore, once we have  $Prove(j, A)$ true at $(m,h)$ in
$\mathcal{M}$, we must also have $\mathcal{M},m,h \models
[j]Prove(j,A)$.

The next axiom is \eqref{A10}. If $\Box Prove(j, A)$ is true at
$(m,h)$ in $\mathcal{M}$, this means that $Prove(j, A)$ is true at
$(m,h')$ in $\mathcal{M}$ for every $h' \in H_m$. Now, take an
arbitrary such $h'$. We know that the negative condition for
$Prove(i,A)$ is the same as for $Prove(j, A)$, and is therefore
satisfied at $(m,h')$. As for the positive condition, assume that
$h'' \in Choice^m_i(h')$. We know that $Prove(j, A)$ is true at
$(m,h'')$, therefore, since $h''$ is obviously in
$Choice^m_j(h'')$, for some $t \in Pol$ we must have both $t \in
Act(m,h'')$ and $\mathcal{M},m \models t\co A$. Thus the positive
condition for $Prove(i,A)$ at $(m,h')$ is satisfied as well. Since
$h'$ was chosen as an arbitrary history through $m$, this means
that $\Box Prove(i, A)$ must be satisfied at $(m,h)$ in
$\mathcal{M}$.

We now take up \eqref{A11}. If $Proven(A)$ is true at $m$ in
$\mathcal{M}$, then there is a $t \in Pol$ such that $t \in
\bigcap_{h \in H_m}Act(m,h)$ and $t\co A$ is true at $m$. By
validity of \eqref{A5}, we immediately get that $\mathcal{M},m
\models KA$. Further, the fact that $t\co A$ is true at $m$ means
that $A \in \mathcal{E}(m,t)$. Now, assume that $m' \in Tree$ is
such that $R(m,m')$. By the epistemic transparency of presented
proofs constraint we know that $t \in \bigcap_{h' \in
H_{m'}}Act(m',h')$. By monotonicity of evidence, we know that $A
\in \mathcal{E}(m',t)$. By the S4 reasoning for $K$ we know that
$\mathcal{M},m' \models KA$. Summing up, we must have $Proven(A)$
true at $m'$, and since $m'$ was chosen as an arbitrary
$R$-successor of $m$, this means that we also have $\mathcal{M},m
\models KProven(A)$.

To prove the validity of \eqref{A12} over the class of normal
jstit models, we proceed by induction on $n \geq 1$.

\emph{Basis}. $n = 1$. Assume, for \emph{reductio}, that
$\mathcal{M},m \models K\langle K\rangle\Diamond Prove(j_1,A_1)$.
Then, by validity of \eqref{A7}, $\mathcal{M},m \models \langle
K\rangle\Diamond Prove(j_1,A_1)$. Therefore, for some $m' \in
Tree$ such that $R(m,m')$, we must have $\mathcal{M},m' \models
\Diamond Prove(j_1,A_1)$. The latter, in turn, means that for some
$h' \in H_{m'}$ we will have $\mathcal{M},m', h' \models
Prove(j_1,A_1)$. We know then that $m'$ must have some
$<$-successors, where $<$ is the irreflexive companion of $\leq$
in $\mathcal{M}$. Indeed, if $m'$ were a $\leq$-maximal moment,
then we would have $H_{m'} = \{ h' \}$, that is to say, $h'$ would
be the only history passing through $m'$. But then, of course $h'
\in Choice^{m'}_{j_1}(h')$, therefore, for some $t \in Pol$ we
would have then both $t \in Act(m', h')$ and $\mathcal{M},m'
\models t\co A$ by the positive condition for $Prove(j_1,A_1)$ at
$(m',h')$. But then, given that $H_{m'} = \{ h' \}$, this would
mean that $t \in \bigcap_{g \in H_{m'}}Act(m',g)$ so that the
negative condition for $Prove(j_1,A_1)$ at $(m',h')$ would be
violated, contradicting our assumption that $\mathcal{M},m', h'
\models Prove(j_1,A_1)$.

Therefore, we can choose a moment $m''$ such that both $m'' > m'$
and $h'$ passes through $m''$; consider then $H_{m''}$. All the
histories passing through $m''$ are pairwise undivided at $m'$,
therefore, by the presenting a new proof makes histories divide
constraint we must have $Act(m', g) = Act(m', g')$ for any $g,g'
\in H_{m''}$. We also know that, since $\mathcal{M},m', h' \models
Prove(j_1,A_1)$, there must be a $t \in Pol$ such that $t \in
Act(m',h')$ and $\mathcal{M},m' \models t\co A$. Since $h' \in
H_{m''}$, this further means that $t \in \bigcap_{g \in
H_{m''}}Act(m', g)$. By the expansion of presented proofs
constraint, we may infer from the latter that $t \in \bigcap_{g
\in H_{m''}}Act(m'', g)$. By the future always matters constraint,
we know that, since $m' < m''$, then we must have $R(m',m'')$,
whence, given that $\mathcal{M},m' \models t\co A$, we must also
have $\mathcal{M},m'' \models t\co A$. Summing this up with $t \in
\bigcap_{g \in H_{m''}}Act(m'', g)$, we get that $\mathcal{M},m''
\models Proven(A)$, which, by \eqref{A11}, means that
$\mathcal{M},m'' \models KProven(A)$, whence further, by
\eqref{A8}, $\mathcal{M},m'' \models \Box K\Box Proven(A)$.
Validity of \eqref{A1} yields then $\mathcal{M},m'' \models K\Box
Proven(A)$. Note, further, that $Proven(A) \to \neg Prove(j,A)$
must be valid as a consequence of \eqref{A9}, and by S5 reasoning
for $\Box$ and S4 reasoning for $K$ we get from this the validity
of:
$$
K\Box Proven(A) \to K\Box\neg Prove(j,A).
$$
The latter means that $\mathcal{M},m'' \models K\Box\neg
Prove(j,A)$, and, pushing the negation outside, $\mathcal{M},m''
\models \neg\langle K\rangle\Diamond Prove(j,A)$. It remains then
to note we already established both $R(m,m')$ and $R(m,m'')$ so
that by transitivity of $R$ we get that $R(m,m'')$. Therefore, the
consequence that $\mathcal{M},m'' \models \neg\langle
K\rangle\Diamond Prove(j,A)$ turns out to be in contradiction with
our initial hypothesis that $\mathcal{M},m \models K\langle
K\rangle\Diamond Prove(j_1,A_1)$. The obtained contradiction shows
that we must have $\neg K\langle K\rangle\Diamond Prove(j_1,A_1)$
true throughout any given normal jstit model for any $A_1 \in
Form$ and $j_1 \in Ag$.

\emph{Induction step}. Assume that for a $k \geq 1$ the validity
of all instances of the scheme $\neg K(\bigvee^{k}_{l = 1}\langle
K \rangle\Diamond Prove(j_{l}, A_{l}))$ has been successfully
shown and assume that $n = k + 1$. Assume, further, that:
$$
\mathcal{M}, m \models K(\bigvee^{k + 1}_{l = 1}\langle K
\rangle\Diamond Prove(j_{l}, A_{l})).
$$
Then, by S4 reasoning for $K$, we know that
$$
\mathcal{M}, m \models \bigvee^{k + 1}_{l = 1}\langle K
\rangle\Diamond Prove(j_{l}, A_{l}),
$$
so that at least one of $\langle K \rangle\Diamond Prove(j_{l},
A_{l})$ must be true at $m$; suppose, wlog, that $l = 1$. Then,
arguing as in the base case, we find a moment $m''$ such that
$R(m,m'')$ and $\mathcal{M},m'' \models K\Box\neg Prove(j_1,A_1)$.
Applying to this S4 reasoning for $K$, we get further that
$\mathcal{M},m'' \models KK\Box\neg Prove(j_1,A_1)$, and, pushing
out the negation, that $\mathcal{M},m'' \models K\neg\langle
K\rangle\Diamond Prove(j_1,A_1)$. Since we have $R(m,m'')$, it
follows that we also have:
$$
\mathcal{M}, m'' \models K(\bigvee^{k + 1}_{l = 1}\langle K
\rangle\Diamond Prove(j_{l}, A_{l})).
$$
From the latter two facts, S4 reasoning for $K$ yields that:
$$
\mathcal{M}, m'' \models K(\bigvee^{k + 1}_{l = 2}\langle K
\rangle\Diamond Prove(j_{l}, A_{l})),
$$
contradicting the induction hypothesis. The obtained contradiction
shows the validity of \eqref{A12} for $n = k + 1$.

The last axiom is \eqref{A13}. So, assume that $\mathcal{M}, m, h
\models \neg Prove(j, A)$. We have to consider then two cases.

\emph{Case 1}. The negative condition for $Prove(j,A)$ fails at
$(m,h)$. Then we must have $\mathcal{M}, m, h \models Proven(A)$,
and by \eqref{A9} we know that $\mathcal{M}, m, h \models
\bigwedge_{i \in Ag}\neg Prove(i,A)$, thus also $\mathcal{M}, m, h
\models \langle j \rangle\bigwedge_{i \in Ag}\neg Prove(i,A)$ by
S5 reasoning for $[j]$.

\emph{Case 2}. The negative condition for $Prove(j,A)$ holds at
$(m,h)$. Then, since we have $\mathcal{M}, m, h \models \neg
Prove(j, A)$, the positive condition for $Prove(j,A)$ at $(m,h)$
must fail. Therefore, we can choose a $g \in Choice^m_j(h)$ such
that for no $t \in Pol$ do we have both $t \in Act(m,g)$ and
$\mathcal{M}, m \models t\co A$. Note, further, that $g \in
Choice^m_i(g)$ for every $i \in Ag$, and therefore the positive
condition for every formula of the form $Prove(i, A)$ fails at
$(m,g)$. Therefore, we must have $\mathcal{M}, m, g \models
\bigwedge_{i \in Ag}\neg Prove(i,A)$, and, since $g \in
Choice^m_j(h)$, also  $\mathcal{M}, m, h \models \langle j
\rangle\bigwedge_{i \in Ag}\neg Prove(i,A)$ as desired.

It only remains to show that \eqref{R4} preserves validity over
normal jstit models. Assume that $KA \to (\neg Proven(B_1)
\vee\ldots \vee\neg Proven(B_n))$ is valid over normal jstit
models, and assume also that we have:
$$
\mathcal{M}, m, h \models KA \wedge (\bigvee_{j \in Ag}
Prove(j,B_1) \wedge\ldots \wedge \bigvee_{j \in Ag} Prove(j,B_n)).
$$
This means that we can choose $j_{B_1},\ldots, j_{B_n} \in Ag$ in
such a way that we end up having:
$$
\mathcal{M}, m, h \models KA \wedge Prove(j_{B_1},B_1)
\wedge\ldots \wedge Prove(j_{B_n},B_n).
$$
We can now re-use the manner of reasoning employed above for the
base case of \eqref{A12}. More precisely, since $\mathcal{M}, m, h
\models Prove(j_{B_1},B_1)$ then $m$ must have some
$<$-successors, otherwise $h$ would be the unique history through
$m$. Then, if there existed $t \in Pol$ such that both $t \in
Act(m,h)$ and $\mathcal{M}, m \models t\co B_1$, the negative
condition for $Prove(j_{B_1},B_1)$ at $(m,h)$ would be violated.
On the other hand, if there were no such $t$, then the positive
condition for $Prove(j_{B_1},B_1)$ at $(m,h)$ would be violated.

Since $m$ is not a $\leq$-maximal moment in $Tree$, then we can
choose an $m' \in Tree$ such that both $m' > m$ and $h \in
H_{m'}$. All the histories passing through $m'$ are pairwise
undivided at $m$, therefore, by the presenting a new proof makes
histories divide constraint we must have $Act(m, g) = Act(m, g')$
for any $g,g' \in H_{m'}$. We also know that, since
$$
\mathcal{M},m, h \models  Prove(j_{B_1},B_1) \wedge\ldots\wedge
Prove(j_{B_n},B_n),
$$
there must be $t_1,\ldots, t_n \in Pol$ such that $t_1,\ldots, t_n
\in Act(m,h)$ and $\mathcal{M},m \models t_i\co B_i$ for all $i$
such that $1\leq i \leq n$. Since $h \in H_{m'}$, this further
means that

\noindent$t_1,\ldots, t_n \in \bigcap_{g \in H_{m'}}Act(m, g)$. By
the expansion of presented proofs constraint, we may infer from
the latter that $t_1,\ldots, t_n \in \bigcap_{g \in H_{m'}}Act(m',
g)$. By the future always matters constraint, we know that, since
$m < m'$, then we must have $R(m,m')$, whence, given that
$\mathcal{M},m \models t_i\co B_i$ for all $i$ such that $1\leq i
\leq n$, we must also have $\mathcal{M},m' \models t_i\co B_i$ for
all such $i$. Summing this up with $t_1,\ldots, t_n \in \bigcap_{g
\in H_{m'}}Act(m', g)$, we get that
$$
\mathcal{M},m' \models  Proven(B_1) \wedge\ldots \wedge
Proven(B_n).
$$
Further, we know that $\mathcal{M}, m \models KA$, so that by
$R(m,m')$ and S4 properties of $K$ we must also have $\mathcal{M},
m' \models KA$. Thus we get that $KA \wedge Proven(B_1)
\wedge\ldots\wedge Prove(B_n)$ is satisfied at $m'$ which is in
contradiction with the assumed validity of

\noindent$KA \to (\neg Proven(B_1) \vee\ldots\vee\neg
Proven(B_n))$.
\end{proof}

We then define a \emph{proof} in the above-presented axiomatic
system as a finite sequence of formulas such that every formula in
it is either an axiom or is obtained from earlier elements of the
sequence by one of the inference rules. A proof is a proof of its
last formula. If an $A \in Form$ is provable in our system, we
will write $\vdash A$.

The presence in our system of the rules like \eqref{R2} and
especially \eqref{R4} complicates the issue of finding the right
notion of an inference from premises and the right format for
Deduction Theorem. Given that these problems lie beyond the scope
of the present paper, we will take a little detour and will base
our definition of consistency of a set of formulas upon the notion
of provable formula, rather than just saying that a set $\Gamma
\subseteq Form$ is inconsistent iff $\bot$ is derivable from
$\Gamma$. Moreover, due to the form of our main result we need to
relativize our notions to sets of proof variables occurring in a
given set of formulas.

More precisely, assume that $Z \subseteq PVar$. Then we can define
$Pol_Z$ and $Form_Z$ as the sets of proof polynomials (resp.
formulas) containing proof variables from $Z$ only. Note that this
imposes no restrictions on proof constants, so that the set of
closed proof polynomials is contained in $Pol_Z$ for every $Z
\subseteq PVar$. Now, for a given $Z \subseteq PVar$ we say that
$\Gamma \subseteq Form_Z$ is a \emph{set of formulas in} $Z$. We
say that $\Gamma$ is \emph{inconsistent} iff for some
$A_1,\ldots,A_n \in \Gamma$ we have $\vdash (A_1 \wedge\ldots
\wedge A_n) \to \bot$, and we say that $\Gamma$ is consistent iff
it is not inconsistent. $\Gamma$ is \emph{maxiconsistent in} $Z$
iff $\Gamma \subseteq Form_Z$ and no consistent subset of $Form_Z$
properly extends $\Gamma$.

Even with this slightly non-standard definition of inconsistency,
we can still do many familiar things, e.g. extend consistent sets
with new formulas and eventually make them maxiconsistent. More
precisely, the following lemma holds:

\begin{lemma}\label{elementaryconsistency}
Let $Z \subseteq PVar$, let $\Gamma \subseteq Form_Z$ be
consistent, and let $A, B \in Form_Z$. Then:
\begin{enumerate}
\item There exists a $\Delta \subseteq Form_Z$ such that $\Delta$
is maxiconsistent in $Z$ and $\Gamma \subseteq \Delta$.

\item If $\Gamma$ is maxiconsistent in $Z$, then exactly one
element of $\{A, \neg A \}$ is in $\Gamma$.

\item If $\Gamma$ is maxiconsistent in $Z$, then $A \vee B \in
\Gamma$ iff $(A \in \Gamma$ or $B \in \Gamma)$.

\item If $\Gamma$ is maxiconsistent in $Z$ and $A, (A \to B) \in
\Gamma$, then $B \in \Gamma$.

\item If $\Gamma$ is maxiconsistent in $Z$, then $A \wedge B \in
\Gamma$ iff $(A \in \Gamma$ and $B \in \Gamma)$.
\end{enumerate}
\end{lemma}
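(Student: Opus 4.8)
The plan is to prove the five items in the order stated. Item 1 is a self-contained Lindenbaum construction; items 2--5 assume $\Gamma$ is already maxiconsistent, and items 3--5 will in turn invoke item 2, so that order is forced. Throughout, the only proof-theoretic resource I need is the propositional base \eqref{A0} together with \eqref{R1}: every implication I appeal to, such as $(A \wedge \neg A) \to \bot$, is a propositional tautology and hence provable, so I never have to touch the awkward rules \eqref{R2} and \eqref{R4}.

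For item 1 I would first observe that $Form_Z$ is countable, since all the generating alphabets ($Ag$, $PVar$, $PConst$, $Var$) are countable and formulas are finite strings; fix an enumeration $Form_Z = \{ C_0, C_1, \ldots \}$. Build an increasing chain $\Gamma = \Gamma_0 \subseteq \Gamma_1 \subseteq \cdots$ by setting $\Gamma_{n+1} = \Gamma_n \cup \{ C_n \}$ whenever the right-hand side is consistent and $\Gamma_{n+1} = \Gamma_n$ otherwise, and put $\Delta = \bigcup_n \Gamma_n$. A routine induction shows each $\Gamma_n$ is consistent. The key point --- and here the non-standard, purely finitary definition of inconsistency works in our favour --- is that $\Delta$ is consistent: any witness to its inconsistency is a finite conjunction $A_1 \wedge \cdots \wedge A_k$ with $\vdash (A_1 \wedge \cdots \wedge A_k) \to \bot$, whose conjuncts already lie in a single $\Gamma_n$, contradicting its consistency. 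Finally $\Delta$ is maximal in $Z$: if some consistent $\Delta' \subseteq Form_Z$ properly extended $\Delta$, it would contain a $C_n \notin \Delta$; but $C_n \notin \Delta$ means $\Gamma_n \cup \{ C_n \}$ was inconsistent, so $\vdash (B_1 \wedge \cdots \wedge B_m \wedge C_n) \to \bot$ for some $B_i \in \Gamma_n \subseteq \Delta'$, making $\Delta'$ inconsistent.

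Item 2 splits into two halves. That $A$ and $\neg A$ cannot both lie in a maxiconsistent $\Gamma$ is immediate from $\vdash (A \wedge \neg A) \to \bot$. For the converse, note that $\neg A \in Form_Z$ exactly when $A \in Form_Z$; if neither $A$ nor $\neg A$ were in $\Gamma$, then by maximality both $\Gamma \cup \{ A \}$ and $\Gamma \cup \{ \neg A \}$ would be inconsistent, yielding finite subsets of $\Gamma$ proving $\bigwedge B_i \to \neg A$ and $\bigwedge C_j \to A$ respectively; combining these propositionally gives $\vdash (\bigwedge B_i \wedge \bigwedge C_j) \to \bot$, contradicting the consistency of $\Gamma$. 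Items 3, 4 and 5 then all follow one template: assume the desired membership fails, use item 2 to place the relevant negation in $\Gamma$, and exhibit a propositional tautology of the form $(\text{conjuncts all in }\Gamma) \to \bot$ --- for instance $((A \vee B) \wedge \neg A \wedge \neg B) \to \bot$ for item 3, $(A \wedge (A \to B) \wedge \neg B) \to \bot$ for item 4, and $((A \wedge B) \wedge \neg A) \to \bot$ together with $((A \wedge B) \wedge \neg (A \wedge B)) \to \bot$ for item 5 --- each directly contradicting consistency.

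I do not anticipate a serious obstacle; the work is entirely routine. The one place demanding a moment's care is the consistency of $\Delta$ in item 1, where one must check that the finitary definition of inconsistency licenses the usual ``a witness lives in a finite initial segment'' argument. That, however, is precisely the shape the definition takes, so the standard Lindenbaum construction transfers without modification.
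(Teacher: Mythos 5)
Your proposal is correct and follows essentially the same route as the paper: the same Lindenbaum-style chain construction for Part 1 (including the same finite-witness argument for consistency of the union and the same maximality check), and the same tautology-plus-Part-2 pattern for Parts 2--5, which the paper itself spells out for Parts 2 and 3 and leaves as "similar" for Parts 4 and 5. The only additions on your side --- noting that $Form_Z$ is countable, that $Form_Z$ is closed under negation, and that only \eqref{A0} and \eqref{R1} are ever invoked --- are harmless refinements of what the paper leaves implicit.
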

\begin{proof} (Part 1) Just as in the standard case, we enumerate the
elements of $Form_Z$ as $A_1,\ldots, A_n,\ldots$ and form the
sequence of sets $\Gamma_1,\ldots, \Gamma_n,\ldots,$ such that
$\Gamma_1 := \Gamma$ and for every natural $i \geq 1$:
\begin{align*}
    \Gamma_{i + 1} :=
    \left\{%
\begin{array}{ll}
    \Gamma_i, & \hbox{ if $\Gamma_i \cup \{ A_ i \}$ is inconsistent;} \\
    \Gamma_i \cup \{ A_ i \}, & \hbox{ otherwise.} \\
\end{array}%
\right.
\end{align*}
We now define $\Delta := \bigcup_{i \geq 1}\Gamma_i$. Of course,
we have $\Gamma \subseteq \Delta$, and, moreover, $\Delta$ is
maxiconsistent in $Z$. To see this, note that for every $i \geq 1$
the set $\Gamma_i$ is consistent by construction. Now, if $\Delta$
is inconsistent, then there must be a valid implication from a
finite conjunction of formulas in $\Delta$ to $\bot$. These
formulas must be mentioned in our numeration of $Form_Z$ so that
the valid implication in question can presented as $\vdash
(A_{i_1} \wedge\ldots \wedge A_{i_n}) \to \bot$ for appropriate
natural $i_1,\ldots, i_n$. Since all of $A_{i_1}, \ldots, A_{i_n}$
are in $\Delta$, we must have, by the construction of
$\Gamma_1,\ldots, \Gamma_n,\ldots,$ that $A_{i_1}, \ldots, A_{i_n}
\in \Gamma_{max(i_1,\ldots, i_n)}$. But then this latter set must
be inconsistent which contradicts our construction.

Further, if some consistent $\Xi \subseteq Form_Z$ is such that
$\Delta \subset \Xi$, then let $A_n \in \Xi \setminus \Delta$. We
must have then $\Gamma_n \cup \{ A_n \}$ inconsistent, but we also
have $\Gamma_n \cup \{ A_n \} \subseteq \Xi$, which implies
inconsistency of $\Xi$, in contradiction to our assumptions.
Therefore, $\Delta$ is not only consistent, but also
maxiconsistent in $Z$.

(Part 2) We cannot have both $A$ and $\neg A$ in $\Gamma$, since
we have, of course, $\vdash (A \wedge \neg A) \to \bot$. If, on
the other hand, neither $A$, nor $\neg A$ is in $\Gamma$, then
both $\Gamma \cup \{ A \}$ and $\Gamma \cup \{ \neg A \}$ must be
inconsistent, so that for some $B_1, \ldots, B_n \in \Gamma$ we
will have:
$$
\vdash (B_1\wedge \ldots\wedge B_n \wedge A) \to \bot,
$$
whereas for some $C_1, \ldots, C_k \in \Gamma$ we will have:
$$
\vdash (C_1\wedge \ldots\wedge C_k \wedge \neg A) \to \bot,
$$
whence we get, using \eqref{A0} and \eqref{R1}:
$$
\vdash (C_1\wedge \ldots\wedge C_k) \to A,
$$
and further:
$$
\vdash (B_1\wedge \ldots\wedge B_n \wedge C_1\wedge \ldots\wedge
C_k) \to \bot,
$$
so that $\Gamma$ turns out to be inconsistent, contrary to our
assumptions.

(Part 3) Assume $(A \vee B) \in \Gamma$. If neither $A$ nor $B$
are in $\Gamma$, then, by Part 2, both $\neg A$ and $\neg B$ are
in $\Gamma$. Using \eqref{A0} and \eqref{R1} we get that:
$$
\vdash ((A \vee B) \wedge \neg A \wedge \neg B) \to \bot,
$$
showing that $\Gamma$ is inconsistent, contrary to our
assumptions. In the other direction, if, say $A \in \Gamma$ and
$(A \vee B) \notin \Gamma$, then, by Part 2, we must have $\neg(A
\vee B) \in \Gamma$. Using \eqref{A0} and \eqref{R1} we get that:
$$
\vdash (\neg(A \vee B) \wedge A) \to \bot,
$$
showing, again, that $\Gamma$ is inconsistent, contrary to our
assumptions. The case when $B \in \Gamma$ is similar.

Parts 4 and 5 are similar to Part 3.
\end{proof}

\textbf{Remark}. Note that one can recover the notion of
non-relativized maxiconsistent set and its properties just by
setting $Z : = PVar$. But this will not be needed in the present
paper.

We are now prepared to formulate our main result:

\begin{theorem}\label{completeness}
Let $X \subseteq PVar$ be such that $PVar \setminus X$ is
countably infinite. Then an arbitrary $\Gamma \subseteq Form_X$ is
consistent iff it is satisfiable in a normal jstit model.
\end{theorem}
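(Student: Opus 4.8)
The plan is to prove the two directions separately, with essentially all the weight falling on the completeness (left-to-right) direction. For the easy direction (satisfiable $\Rightarrow$ consistent) I argue contrapositively: if $\Gamma$ is inconsistent then $\vdash (A_1 \wedge \ldots \wedge A_n) \to \bot$ for some $A_1, \ldots, A_n \in \Gamma$, and a routine induction on the length of proofs---using Theorem \ref{soundness} for the axioms as base cases and for the rules \eqref{R1}--\eqref{R4} as inductive steps---shows that every provable formula is valid over normal jstit models. Hence this implication is valid, so no normal model can make all of $A_1, \ldots, A_n$ true at one moment-history pair, and $\Gamma$ is unsatisfiable.

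For completeness I would construct a canonical model out of maximal consistent sets. First extend the consistent $\Gamma$ to a set $\Gamma^{*}$ maxiconsistent in $X$ by Lemma \ref{elementaryconsistency}(1). The moments are (suitably decorated) maximal consistent sets; the epistemic accessibility is the canonical one, $R(m,m')$ iff $\{A \mid KA \in m\} \subseteq m'$, and the evidence function is $\mathcal{E}(m,t) = \{A \mid t\co A \in m\}$. Monotonicity of evidence and the closure constraints then fall out of the justification axioms \eqref{A4}--\eqref{A6} and the $K$-axioms \eqref{A7}--\eqref{A8} (the inclusion $t\co A \to K(t\co A)$ needed for monotonicity comes from \eqref{A5} applied twice), while normality is secured by rule \eqref{R3}. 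The branching-time skeleton $(Tree, \leq, Choice)$ must be synthesised so as to validate the stit fragment---this is the familiar but delicate core of stit completeness, with histories taken as maximal $\leq$-chains and choice cells read off from the $[j]$-modality under the control of \eqref{A1}--\eqref{A3}.

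The genuinely new ingredient is the whiteboard function $Act$ together with the reservoir of fresh proof variables $PVar \setminus X$. Since the logic is non-compact, a formula $Proven(A) \in m$ need not be witnessed by any proof polynomial occurring in $\Gamma$; the infinitely many fresh variables are exactly what let me give each such $Proven(A)$ a private witness $w \in PVar \setminus X$, place $w$ in $Act(m,h)$ for every $h \in H_m$, and set $A \in \mathcal{E}(m,w)$ with $KA$ secured by \eqref{A11}. Because $w$ occurs in no member of $\Gamma^{*}$, this disturbs none of the maximal consistent sets, and the truth lemma need only be established for formulas in $Form_X$. Dually, whenever $Prove(j,A) \in m$, axiom \eqref{A9} forces $\neg Proven(A) \in m$, so no single witness for $A$ may be presented along every history through $m$; the positive condition is met by presenting fresh witnesses inside the relevant choice cells only, and axioms \eqref{A9}--\eqref{A13} together with rule \eqref{R4} are precisely what render these scattered assignments globally coherent.

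The main obstacle I foresee is discharging all of the $Act$-constraints (7)--(11) at once while forcing the positive and negative conditions of every $Prove(j,A)$ and every $Proven(A)$ to come out correctly, with no two witness assignments colliding and no stray witness presented everywhere that would spuriously verify some $Proven(A)$. Constraint (9) ("presenting a new proof makes histories divide") in particular couples the whiteboard contents to the branching combinatorics, so the fresh witnesses must be threaded along histories in a way that respects where those histories split. Managing this bookkeeping, and then verifying every clause of the truth lemma---above all the clauses for $Prove(j,A)$ and $Proven(A)$---is where essentially all the labour lies, and is presumably the content of Section \ref{canonicalmodel}. Once the truth lemma is in hand, evaluating it at the moment whose set contains $\Gamma^{*}$ yields a normal jstit model satisfying $\Gamma$, which completes the argument.
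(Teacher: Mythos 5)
Your proposal identifies the right overall strategy, and its easy direction is exactly the paper's Corollary \ref{c-soundness}. But for the hard direction you have written a plan, not a proof: you correctly name the simultaneous discharge of the $Act$-constraints and of the $Prove$/$Proven$ clauses of the truth lemma as ``the main obstacle'' and then defer it to ``presumably the content of Section \ref{canonicalmodel}''. The ideas that overcome that obstacle are precisely what is missing. In the paper, moments are not decorated maxiconsistent sets but $\equiv$-classes of finite \emph{sequences} $(\Gamma_1,\ldots,\Gamma_n)\alpha$ of sets maxiconsistent in $X$ (``elements''), subject to three coherence conditions (Definition \ref{element}): $K$-formulas persist along the sequence, $Prove(j,A)\in\Gamma_1$ forces $Proven(A)\in\Gamma_i$ for $i>1$, and $K\Box\neg Prove(j,A)\in\Gamma_i$ for all $i>1$. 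The extendability of such sequences (Lemma \ref{elementcontinuation}) is where \eqref{A12} and \eqref{R4} actually do their work; you gesture at these axioms as what ``renders the scattered assignments globally coherent'' but give no argument. Likewise the $\uparrow/\downarrow$ duplication of elements, which is what lets one control $\bigcap_{h\in H_m}Act(m,h)$ and hence the negative condition for $Prove(j,A)$, has no counterpart in your sketch.

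Moreover, two details you do commit to would fail. First, your witness scheme for $Proven(A)$ --- a private fresh variable per formula, placed in $Act(m,h)$ for every $h\in H_m$ --- violates the ``no new proofs guaranteed'' constraint: anything lying in $\bigcap_{h\in H_m}Act(m,h)$ must already occur in some $Act(m',h)$ with $m'<m$, so such witnesses would have to be threaded backwards through the whole tree, re-creating exactly the bookkeeping problem you set out to avoid. The paper instead uses a \emph{single} variable $z$, presented everywhere from the root onwards, whose evidence set $\mathcal{E}(m,z)=\{A \mid (\forall\xi\in m)(Proven(A)\in end(\xi))\}$ varies with the moment; this makes the $Act$-constraints trivial for $z$ while still witnessing exactly the right $Proven$ formulas, and it is complemented by the $y_{(j,A)}/u_A/w_A$ scheme keyed to $Prove(j,A)\wedge\neg\Box Prove(j,A)$ versus $\Box Prove(j,A)$. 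Second, the canonical evidence function $\mathcal{E}(m,t)=\{A\mid t\co A\in m\}$ cannot be right: if $s\in Pol_X$ with $\mathcal{E}(m,s)\neq\emptyset$ and $t$ is a fresh variable, then $(s+t)\co A\notin Form_X$ for every $A$, so $\mathcal{E}(m,s+t)$ would be empty and the closure condition $\mathcal{E}(m,s)\cup\mathcal{E}(m,t)\subseteq\mathcal{E}(m,s+t)$ fails. The paper avoids this by setting $\mathcal{E}(m,t)=Form$ for compound polynomials outside $Pol_X$ and by defining $\mathcal{E}$ on $Pol_X$ through provable implications from $t_i\co B_i$ formulas shared by all elements of $m$, not through membership of $t\co A$ in a single maxiconsistent set.
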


The rest of the paper is mainly concerned with proving Theorem
\ref{completeness}. One part of it we have, of course, right away,
as a consequence of Theorem \ref{soundness}:

\begin{corollary}\label{c-soundness}
Let $X \subseteq PVar$ be such that $PVar \setminus X$ is
countably infinite. If $\Gamma \subseteq Form_X$ is satisfiable in
a normal jstit model, then $\Gamma$ is consistent.
\end{corollary}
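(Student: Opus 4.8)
The plan is to derive this corollary directly from the soundness theorem (Theorem \ref{soundness}) together with the definition of inconsistency, via a short contrapositive argument. First I would upgrade Theorem \ref{soundness}, which is phrased axiom-by-axiom and rule-by-rule, into the global statement that \emph{every provable formula is valid over the class of normal jstit models}. This follows by a routine induction on the length of a proof: the base case is that every instance of \eqref{A0}--\eqref{A13} is valid (established in Theorem \ref{soundness}, with \eqref{A0} covered by the observation that the stit reduct $\langle Tree, \leq, Choice, V\rangle$ of a normal model validates classical propositional logic), and the induction step is that each application of \eqref{R1}--\eqref{R4} to valid premises yields a valid conclusion, which is exactly what Theorem \ref{soundness} asserts. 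Hence $\vdash A$ implies that $A$ is true at every $(m,h)$ in every normal jstit model.

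Now suppose $\Gamma \subseteq Form_X$ is satisfiable in a normal jstit model, so that there are a normal jstit model $\mathcal{M}$ and a pair $(m,h) \in MH(\mathcal{M})$ with $\mathcal{M}, m, h \models A$ for every $A \in \Gamma$. I would establish consistency by contradiction. If $\Gamma$ were inconsistent, then by the definition of inconsistency there would be $A_1,\ldots,A_n \in \Gamma$ with $\vdash (A_1 \wedge \ldots \wedge A_n) \to \bot$. By the global soundness established in the previous paragraph, the formula $(A_1 \wedge \ldots \wedge A_n) \to \bot$ would be valid over normal jstit models, and in particular $\mathcal{M}, m, h \models (A_1 \wedge \ldots \wedge A_n) \to \bot$. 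But since each $A_i$ is satisfied at $(m,h)$, the standard clause for conjunction yields $\mathcal{M}, m, h \models A_1 \wedge \ldots \wedge A_n$, and then the standard clause for implication forces $\mathcal{M}, m, h \models \bot$, which is impossible. Therefore $\Gamma$ must be consistent.

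There is no genuine obstacle here; this corollary is the easy half of Theorem \ref{completeness}, with all the real work residing in the converse direction to be carried out in Section \ref{canonicalmodel}. The only points worth flagging are the bookkeeping in the induction that lifts the schematic soundness theorem to arbitrary theorems, and the observation that the hypothesis on $X$ --- namely that $PVar \setminus X$ is countably infinite --- plays no role whatsoever in this direction. That hypothesis is merely inherited from the statement of Theorem \ref{completeness} and will only be needed later, to supply the reservoir of fresh proof variables driving the canonical model construction for the completeness (satisfiability-from-consistency) half.
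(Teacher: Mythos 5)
Your proposal is correct and follows essentially the same route as the paper: assume satisfiability at some $(m,h)$, suppose toward a contradiction that $\vdash (A_1 \wedge\ldots\wedge A_n) \to \bot$ for some $A_1,\ldots,A_n \in \Gamma$, invoke soundness (Theorem \ref{soundness}) to conclude this implication holds at $(m,h)$, and derive the impossible $\mathcal{M},m,h \models \bot$. The only difference is that you spell out the routine induction on proof length that lifts the axiom-by-axiom, rule-by-rule statement of Theorem \ref{soundness} to ``every theorem is valid,'' a step the paper leaves implicit; your remark that the hypothesis on $PVar \setminus X$ is idle in this direction also matches the paper's usage.
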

\begin{proof}
Let $\Gamma \subseteq Form_X$ be satisfiable in a normal jstit
model so that we have, say $\mathcal{M}, m, h \models \Gamma$ for
some $(m,h) \in MH(\mathcal{M})$. If $\Gamma$ were inconsistent
this would mean that for some $A_1,\ldots,A_n \in \Gamma$ we would
have $\vdash (A_1 \wedge\ldots \wedge A_n) \to \bot$. By Theorem
\ref{soundness}, this would mean that:
$$
\mathcal{M}, m, h \models (A_1 \wedge\ldots \wedge A_n) \to \bot,
$$
whence clearly $\mathcal{M}, m, h \models \bot$, which is
impossible. Therefore, $\Gamma$ must be consistent.
\end{proof}

Before we move further, we mention some theorems in the above
axiom system to be used later in the proof of the main result:
\begin{lemma}\label{theorems}
The following holds for every $A \in Form$, $t \in Pol$, $x \in
PVar$, and $j \in Ag$:
\begin{enumerate}
\item $\vdash t\co A \to \Box t\co A$;

\item $\vdash Proven(A) \to \Box Proven(A)$;

\item $\not\vdash x\co A$;

\item $\vdash (Prove(j, A) \wedge \neg\Box Prove(j,A)) \to
[j](Prove(j, A) \wedge \neg\Box Prove(j,A))$;

\item $\vdash KA \to \Box KA$.
\end{enumerate}
\end{lemma}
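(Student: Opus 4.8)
The plan is to dispatch the three settledness-type items 1, 2 and 5 from one auxiliary fact, to give item 4 a short focused derivation, and to settle the non-derivability in item 3 semantically through soundness.

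First I would record that $\vdash KB \to \Box B$ for every $B \in Form$. By \eqref{A8} we have $\vdash KB \to \Box K\Box B$; the $T$-scheme for $\Box$ contained in \eqref{A1} gives $\vdash \Box K\Box B \to K\Box B$ (instantiating $\Box C \to C$ at $C := K\Box B$), and the $T$-scheme for $K$ contained in \eqref{A7} gives $\vdash K\Box B \to \Box B$ (instantiating $KC \to C$ at $C := \Box B$); composing these with \eqref{R1} yields the claim, using only reflexivity axioms and no necessitation for $\Box$. With this fact in hand the three items are immediate: for item 5 the $4$-scheme for $K$ gives $\vdash KA \to KKA$ and the auxiliary fact (with $B := KA$) gives $\vdash KKA \to \Box KA$; for item 2, \eqref{A11} gives $\vdash Proven(A) \to KProven(A)$, to which the auxiliary fact (with $B := Proven(A)$) applies; for item 1, I would first get $\vdash t\co A \to K(t\co A)$ by using \eqref{A5} twice, once directly for $\vdash t\co A \to {!t}\co(t\co A)$ and once as the instance with polynomial $!t$ and formula $t\co A$ for $\vdash {!t}\co(t\co A) \to K(t\co A)$, and then apply the auxiliary fact with $B := t\co A$.

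For item 4, write $P := Prove(j,A)$. From \eqref{A9} we read off $\vdash P \to [j]P$. For the second conjunct the trick is to lift $\neg\Box P$ to its $\Box$-version: the $5$-scheme for $\Box$ in \eqref{A1} gives $\vdash \neg\Box P \to \Box\neg\Box P$, after which \eqref{A2} instantiated at $\neg\Box P$ gives $\vdash \Box\neg\Box P \to [j]\neg\Box P$, so that $\vdash \neg\Box P \to [j]\neg\Box P$. Hence $\vdash (P \wedge \neg\Box P) \to ([j]P \wedge [j]\neg\Box P)$, and the conjunction-distribution theorem of the normal logic of $[j]$, namely $\vdash ([j]P \wedge [j]\neg\Box P) \to [j](P \wedge \neg\Box P)$, delivers the conclusion.

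Item 3 is a non-derivability statement, so I would argue through soundness (Theorem \ref{soundness}), for which it suffices to exhibit a single normal jstit model on which $x\co A$ fails. I would take the one-moment model with $Tree = \{m_0\}$, its unique history $h_0$, reflexive $R$, the trivial choice function, $Act \equiv \emptyset$, and the evidence function $\mathcal{E}(m_0,s) := \emptyset$ for $s \in PVar$ and $\mathcal{E}(m_0,s) := Form$ otherwise. The only point needing care is that this is a genuine normal jstit model: normality constrains $\mathcal{E}(m_0,c)$ only at proof constants, where $Form$ has been placed, and none of the closure clauses in constraint 6 can push a formula into $\mathcal{E}(m_0,x)$, since a proof variable is never of the form $s\times t$, $s+t$, or $!s$; the remaining model constraints hold trivially for a one-moment structure. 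Since $A \notin \mathcal{E}(m_0,x) = \emptyset$, the satisfaction clause for $x\co A$ fails at $(m_0,h_0)$, whence $x\co A$ is not valid and, by soundness, $\not\vdash x\co A$.

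The main obstacle is the derivation of item 4: the useful observation is that the negative conjunct $\neg\Box P$ must first be raised to $\Box\neg\Box P$ by the $\Box$-$5$ axiom before \eqref{A2} can turn it into $[j]\neg\Box P$, after which the two conjuncts are glued by normal-modal conjunction distribution. Item 3 is conceptually easy but hinges on the small verification that leaving proof-variable evidence empty is compatible both with normality and with the evidence-closure constraints.
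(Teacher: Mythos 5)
Your proposal is correct and takes essentially the same approach as the paper: items 1, 2, 4 and 5 are built from the very same axiom instances (your auxiliary fact $\vdash KB \to \Box B$ is exactly the common tail $K(\cdot) \to \Box K\Box(\cdot) \to K\Box(\cdot) \to \Box(\cdot)$ that the paper's chains for parts 1, 2 and 5 each pass through, and your derivation of item 4 coincides step for step with the paper's). For item 3 the paper likewise argues semantically via soundness by forcing $\mathcal{E}(m,x) = \emptyset$, the only difference being that it modifies an arbitrary normal jstit model at the variable $x$ while you construct a concrete one-moment model; the key verification --- that emptying the evidence set of a proof variable is compatible with normality and the closure constraints, since those constraints only ever populate evidence sets of compound polynomials --- is the same in both.
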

\begin{proof}
(Part 1) We have:
\begin{align*}
    t\co A &\to !t\co t\co A&&\textup{(by \eqref{A5})}\\
    &\to Kt\co A&&\textup{(by \eqref{A5})}\\
    &\to \Box K\Box t\co A&&\textup{(by \eqref{A8})}\\
    &\to K\Box t\co A&&\textup{(by \eqref{A1})}\\
    &\to \Box t\co A&&\textup{(by \eqref{A7})}
\end{align*}
 Our theorem follows then by transitivity of implication.

(Part 2) Again, we proceed by building a chain of implications:
\begin{align*}
    Proven(A) &\to KProven(A)&&\textup{(by \eqref{A11})}\\
    &\to \Box K\Box Proven(A)&&\textup{(by \eqref{A8})}\\
    &\to K\Box Proven(A)&&\textup{(by \eqref{A1})}\\
    &\to \Box Proven(A)&&\textup{(by \eqref{A7})}
\end{align*}

(Part 3). Take an arbitrary normal jstit model $\mathcal{M} =
\langle Tree, \leq, Choice, Act, R, \mathcal{E}, V\rangle$ and
consider another model $\mathcal{M}' = \langle Tree, \leq, Choice,
Act, R, \mathcal{E}', V\rangle$ such that:
\begin{align*}
    \mathcal{E}'(m,t) = \left\{%
\begin{array}{ll}
    \mathcal{E}(m,t), & \hbox{if $t \neq x$;} \\
    \emptyset, & \hbox{if $t = x$.} \\
\end{array}%
\right.
\end{align*}
It is straightforward to verify that  $\mathcal{M}'$ is again a
normal jstit model, and we obviously have $\mathcal{M}', m
\not\models x\co A$ for every $m \in Tree$. Therefore, $x\co A$ is
not valid, and, by Theorem \ref{soundness}, cannot be provable in
our system.

(Part 4). We chain the implications as follows:
\begin{align*}
    (Prove(j, A) \wedge \neg\Box Prove(j,A)) &\to ([j]Prove(j, A) \wedge \Box\neg\Box Prove(j,A))&&\textup{(by \eqref{A1} and \eqref{A9})}\\
    &\to ([j]Prove(j, A) \wedge [j]\neg\Box Prove(j,A))&&\textup{(by \eqref{A2})}\\
    &\to [j](Prove(j, A) \wedge \neg\Box Prove(j,A))&&\textup{(by \eqref{A1})}
\end{align*}
\end{proof}

(Part 5). By S5 properties of $\Box$ and S4 properties of $K$, we
clearly have $\vdash \Box K\Box A \to \Box KA$. Part 5 follows
then by \eqref{A8} and transitivity of implication.

\section{The canonical model}\label{canonicalmodel}

The main aim of the present section is to prove the inverse of
Corollary \ref{c-soundness}. The method used is a variant of the
canonical model technique, but, due to the complexity of the case,
we do not define our model in one full sweep. Rather, we proceed
piecewise, defining elements of the model one by one, and checking
the relevant constraints as soon, as we have got enough parts of
the model in place. The last subsection proves the truth lemma for
the defined model.

Throughout this section we fix an $X \subseteq PVar$ such that
$PVar \setminus X$ is countably infinite. We then
present\footnote{More precisely, we divide $PVar \setminus X$ into
three countably infinite subsets plus a single proof variable
which we will denote $z$. For the first of these three subsets
(denoted $Y$) we fix a bijection onto the Cartesian product of
$Ag$ and $Form$, for the other two (denoted $W$ and $U$) we fix
bijections onto $Form$.} the set of proof variables in the
following form:
$$
PVar = X \cup Y \cup W \cup U \cup \{ z \},
$$
where:
$$
Y := \{ y_{(i,A)} \mid i \in Ag, A \in Form_X \},
$$
$$
W := \{ w_A \mid A \in Form_X \},
$$
$$
U := \{ u_A \mid A \in Form_X \}.
$$
Since $Form$ is countably infinite and $Ag$ is finite, this
presentation of $PVar$ is well-defined. Also throughout this
section we will use $\mathcal{M} = \langle Tree, \leq, Choice,
Act, R, \mathcal{E}, V\rangle$ as a fixed notation for our
canonical model.

The ultimate building blocks of $\mathcal{M}$ we will call
\emph{elements}. Before going on with the definition of
$\mathcal{M}$, we define what these elements are and explore some
of their properties.

\begin{definition}\label{element}
An element is a sequence of the form $(\Gamma_1,\ldots,\Gamma_n)
\alpha$ for some natural $n \geq 1$ such that:
\begin{itemize}
\item $\alpha \in \{ \uparrow, \downarrow \}$;

\item For every $i \leq n$, $\Gamma_i$ is maxiconsistent in $X$;

\item For every $i < n$, for all $A \in Form_X$, if $KA \in
\Gamma_i$, then $KA \in \Gamma_{i + 1}$;

\item For every $i$ such that $1 < i \leq n$, for all $j \in Ag$
and $A \in Form_X$, if $Prove(j,A) \in \Gamma_1$, then $Proven(A)
\in \Gamma_i$;

\item For every $i$ such that $1 < i \leq n$, for all $A \in
Form_X$, it is true that

\noindent$K\Box\neg Prove(j, A) \in \Gamma_i$.
\end{itemize}
\end{definition}
In other words, elements are sequences of subsets of $Form_X$ of a
rather special kind, which are signed by either $\downarrow$ or
$\uparrow$. The (purely technical) reason for including these
arrows in the structure of elements is that one normally needs at
least two copies of one element in order to get the truth
conditions for formulas like $\Box Prove(j,A)$ right. Both
$\downarrow$ or $\uparrow$ mainly become relevant after we define
$Act$ and for most other purposes they can be more or less
overlooked.

We prove the following lemma:
\begin{lemma}\label{elementcontinuation}
Whenever $(\Gamma_1,\ldots,\Gamma_n)\alpha$ is an element, then,
for some $\Gamma_{n + 1} \subseteq Form_X$, the sequence
$(\Gamma_1,\ldots,\Gamma_{n + 1})\alpha$ is also an element.
\end{lemma}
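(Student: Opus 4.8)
The plan is to build $\Gamma_{n+1}$ as a maxiconsistent extension of precisely those formulas that Definition \ref{element} forces it to contain. Put
$$
\Sigma = \{KA \mid KA \in \Gamma_n\} \cup \{Proven(A) \mid \exists j \in Ag,\ Prove(j,A) \in \Gamma_1\} \cup \{K\Box\neg Prove(j,A) \mid j \in Ag,\ A \in Form_X\}.
$$
Every listed formula lies in $Form_X$, so $\Sigma \subseteq Form_X$. If I can show $\Sigma$ is consistent, then Part 1 of Lemma \ref{elementaryconsistency} supplies a $\Gamma_{n+1}$ maxiconsistent in $X$ with $\Sigma \subseteq \Gamma_{n+1}$, and $(\Gamma_1,\ldots,\Gamma_{n+1})\alpha$ is then an element: the clauses governing $\Gamma_1,\ldots,\Gamma_n$ are left untouched, while the three families placed into $\Sigma$ are exactly what the $K$-persistence clause (at $i=n$), the $Prove/Proven$ clause (at $i=n+1$) and the $K\Box\neg Prove$ clause (at $i=n+1$) demand of $\Gamma_{n+1}$. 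So everything reduces to the consistency of $\Sigma$, which is the heart of the matter.

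When $n>1$ this is immediate: applying the last two clauses of Definition \ref{element} at $i=n$ shows that both the $Proven$-family and the $K\Box\neg Prove$-family are already subsets of $\Gamma_n$, while the $K$-family is trivially contained in $\Gamma_n$; hence $\Sigma \subseteq \Gamma_n$ and is consistent. The real work is the case $n=1$, which I would handle by contradiction, assuming that a finite conjunction of members of $\Sigma$, say $\Phi \wedge \bigwedge_s Proven(B_s) \wedge \bigwedge_t K\Box\neg Prove(j_t,C_t)$ with $\Phi=\bigwedge_k KA_k$ and all $KA_k, Prove(j_s,B_s)\in\Gamma_1$, proves $\bot$.

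The first move is to discharge the $K\Box\neg Prove$ conjuncts. Rewriting each as $\neg\langle K\rangle\Diamond Prove(j_t,C_t)$, the assumed inconsistency yields $\vdash (\Phi \wedge \bigwedge_s Proven(B_s)) \to \bigvee_t \langle K\rangle\Diamond Prove(j_t,C_t)$. Prefixing $K$ (via \eqref{R2} and normality of $K$ from \eqref{A7}) and using that $\Phi \wedge \bigwedge_s Proven(B_s)$ entails its own $K$-necessitation (by the $4$-axiom for $K$ on each $KA_k$, by \eqref{A11} on each $Proven(B_s)$, and $K$-aggregation) gives $\vdash (\Phi \wedge \bigwedge_s Proven(B_s)) \to K\bigvee_t \langle K\rangle\Diamond Prove(j_t,C_t)$. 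The consequent is refuted outright by \eqref{A12}, so $\Phi \wedge \bigwedge_s Proven(B_s)$ is inconsistent on its own (when no $K\Box\neg Prove$ conjuncts are present the reduction is trivial).

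It then remains to contradict the consistency of $\Gamma_1$. From $\vdash (\Phi \wedge \bigwedge_s Proven(B_s)) \to \bot$ I obtain $\vdash \Phi \to \bigvee_s \neg Proven(B_s)$, and since $\vdash \Phi \leftrightarrow KA^\ast$ for $A^\ast = \bigwedge_k A_k$ (standard $K$-aggregation), I may recast this as $\vdash KA^\ast \to (\neg Proven(B_1) \vee\cdots\vee \neg Proven(B_q))$, which is exactly the premise shape of \eqref{R4}. Applying \eqref{R4} and passing back through the same equivalence gives $\vdash \Phi \to \bigvee_s \bigwedge_{j\in Ag}\neg Prove(j,B_s)$. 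As $\Phi \in \Gamma_1$, maxiconsistency (Parts 3 and 5 of Lemma \ref{elementaryconsistency}) forces $\neg Prove(j_s,B_s)\in\Gamma_1$ for some $s$, contradicting $Prove(j_s,B_s)\in\Gamma_1$. Hence $\Sigma$ is consistent in all cases. The main obstacle is precisely this $n=1$ argument, where the combined use of \eqref{A12} (to eliminate the negative-proof conjuncts) and \eqref{R4} (to turn a disjunction of $\neg Proven$ into a disjunction of $\neg Prove$) is essential; the rest is routine checking against Definition \ref{element}.
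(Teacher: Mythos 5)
Your proof is correct and takes essentially the same route as the paper's: both construct the set of formulas forced on $\Gamma_{n+1}$ by Definition~\ref{element}, establish its consistency in the critical $n=1$ case by exactly the same combination of \eqref{R2}/\eqref{A7}-reasoning, \eqref{A11}, \eqref{A12} (to dispose of the $K\Box\neg Prove$ formulas) and \eqref{R4} (to turn $\neg Proven$ disjuncts into $\neg Prove$ disjuncts contradicting maxiconsistency of $\Gamma_1$), and then conclude via Lemma~\ref{elementaryconsistency}.1. The only differences are organizational: you peel off the $K\Box\neg Prove$ conjuncts before invoking \eqref{R4}, whereas the paper proves consistency of the $K$/$Proven$ part first and then adds those conjuncts, and for $n>1$ the paper simply duplicates $\Gamma_n$ while you note the forced set is contained in $\Gamma_n$ --- both trivial.
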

\begin{proof}
Assume $(\Gamma_1,\ldots,\Gamma_n)\alpha$ is an element. We have
two cases to consider:

\emph{Case 1.} $n = 1$. Then consider the set:

\begin{align*}
\Delta := \{ KA \mid KA \in \Gamma_1 \} \cup \{ Proven(A) \mid
(\exists j \in Ag)&(Prove(j, A) \in \Gamma_1) \} \cup\\
&\cup \{ K\Box\neg Prove(j, A) \mid A \in Form \}.
\end{align*}

We show that $\Delta$ is consistent. Of course, the set $\{ KA
\mid KA \in \Gamma_1 \}$ is consistent since it is a subset of
$\Gamma_1$ and the latter is assumed to be consistent.

Further, if the set
$$
\Delta' := \{ KA \mid KA \in \Gamma_1 \} \cup \{ Proven(A) \mid
(\exists j \in Ag)(Prove(j, A) \in \Gamma_1) \}
$$
is inconsistent, this would mean, wlog, that for some $B_1,\ldots,
B_k, C_ 1,\ldots, C_l \in Form$ and $j_1, \ldots, j_l \in Ag$ such
that $KB_1,\ldots, KB_k$ and $Prove(j_1, C_1),\ldots,
Prove(j_l,C_l)$ are in $\Gamma_1$, we have that:
$$
\vdash(KB_1\wedge\ldots \wedge KB_k) \to (\neg Proven(C_1)
\vee\ldots \vee \neg Proven(C_l)),
$$
whence, by \eqref{A7}:
$$
\vdash K(B_1\wedge\ldots \wedge B_k) \to (\neg Proven(C_1)
\vee\ldots \vee \neg Proven(C_l)),
$$
and further, by \eqref{R4}:
$$
\vdash K(B_1\wedge\ldots \wedge B_k) \to (\bigwedge_{j \in Ag}\neg
Prove(j, C_1) \vee\ldots \vee \bigwedge_{j \in Ag}\neg
Prove(j,C_l)).
$$
Since the latter formula is in $X$ it is, of course, in $\Gamma_1$
by its maxiconsistency in $X$. Also, given Lemma
\ref{elementaryconsistency}, $K(B_1\wedge\ldots \wedge B_k)$ is in
$\Gamma_1$ by the fact that $KB_1,\ldots, KB_k \in \Gamma_1$,
\eqref{A7}, and the fact that $\Gamma_1$ is maxiconsistent in $X$.
Therefore, we get:
$$
\bigwedge_{j \in Ag}\neg Prove(j, C_1) \vee\ldots \vee
\bigwedge_{j \in Ag}\neg Prove(j,C_l) \in \Gamma_1
$$
by Lemma \ref{elementaryconsistency}.4. By Lemma
\ref{elementaryconsistency}.3, we further get that for some $r$
such that $1 \leq r \leq l$ all of the formulas $\neg Prove(j,
C_r)$, where $j \in Ag$ are in $\Gamma_1$. By the choice of
$C_1,\ldots, C_l$ this makes $\Gamma_1$ inconsistent and we get a
contradiction, which shows that $\Delta'$ is consistent.

Assume, further, that $\Delta$ is inconsistent. In view of
consistency of $\Delta'$ this will mean that for some
$KB_1,\ldots, KB_k$ in $\Gamma_1$, and some $Proven(C_1),\ldots,
Proven(C_l)$ from $\Delta' \setminus \Gamma_1$ and some
$Prove(j_1,D_1),\ldots, Prove(j_r,D_r) \in Form$, we will have:
\begin{align*}
\vdash(KB_1\wedge\ldots \wedge KB_k \wedge &Proven(C_1)
\wedge\ldots
\wedge Proven(C_l)) \to\\
&\to (\langle K \rangle\Diamond Prove(j_1,D_1)\vee\ldots
\vee\langle K \rangle\Diamond Prove(j_r,D_r)).
\end{align*}
From the latter validity, by \eqref{R2} and \eqref{A7} we get
that:
\begin{align*}
\vdash K(KB_1\wedge\ldots \wedge KB_k \wedge &Proven(C_1)
\wedge\ldots
\wedge Proven(C_l)) \to\\
&\to K(\langle K \rangle\Diamond Prove(j_1,D_1)\vee\ldots
\vee\langle K \rangle\Diamond Prove(j_r,D_r)),
\end{align*}
whence, by \eqref{A12}, we obtain:
\begin{align*}
\vdash K(KB_1\wedge\ldots \wedge KB_k \wedge Proven(C_1)
\wedge\ldots \wedge Proven(C_l)) \to \bot,
\end{align*}
and, by \eqref{A7}, and \eqref{A11} we further obtain:
\begin{align*}
\vdash (KB_1\wedge\ldots \wedge KB_k \wedge Proven(C_1)
\wedge\ldots \wedge Proven(C_l)) \to \bot,
\end{align*}
showing that $\Delta'$ must be inconsistent. This makes up a
contradiction showing that $\Delta$ must be consistent.

Since $\Delta$ is shown to be consistent, then, by Lemma
\ref{elementaryconsistency}.1, it is also extendable to a set
$\Gamma_2$ which is maxiconsistent in $X$. By the choice of
$\Delta$, this means that $(\Gamma_1, \Gamma_2)\alpha$ must be an
element.

\emph{Case 2}. $n > 1$. Then it is easy to see that
$(\Gamma_1,\ldots, \Gamma_n, \Gamma_n)\alpha$ is an element.
\end{proof}
The structure of elements will be important in what follows. If
$\xi = (\Gamma_1,\ldots, \Gamma_n)\alpha$ is an element, then its
\emph{initial segment} is any element $\tau$ of the form
$(\Gamma_1,\ldots, \Gamma_k)\alpha$ with $k \leq n$. If, moreover,
$k < n$, then $\tau$ is a \emph{proper} initial segment of $\xi$,
and if $k = n -1$, then $\tau$ is the \emph{greatest} proper
initial segment of $\xi$. Moreover, we define $n$ to be the
\emph{length} of $\xi$. Thus, any element of length $1$ has no
proper initial segments. Furthermore, we define that $\Gamma_n$ is
the end element of $\xi$ and write $\Gamma_n = end(\xi)$.

We now define the canonical model using elements as our building
blocks. We start by defining the following relation $\equiv$
between elements of equal length. For the elements of length $1$
we set:
$$
(\Gamma)\alpha \equiv (\Delta)\beta \Leftrightarrow (\forall A \in
Form_X)(\Box A \in \Gamma \Rightarrow A \in \Delta);
$$
and for the elements of greater length we set:

\begin{align*}
(\Gamma_1,\ldots, \Gamma_n, \Gamma_{n + 1})\alpha \equiv
&(\Delta_1,\ldots, \Delta_n, \Delta_{n + 1})\beta \Leftrightarrow\\
&\Leftrightarrow (\Gamma_1 = \Delta_1 \wedge \ldots \wedge
\Gamma_n = \Delta_n \wedge \alpha = \beta \wedge (\Gamma_{n +
1})\alpha \equiv (\Delta_{n + 1})\beta).
\end{align*}

It is routine to check that $\equiv$ is an equivalence relation
given that $\Box$ is an S5 modality. We will denote the
equivalence class of element $(\Gamma_1,\ldots, \Gamma_n)\alpha$
generated by $\equiv$ by $[(\Gamma_1,\ldots,
\Gamma_n)\alpha]_\equiv$. Since all the elements inside a given
$\equiv$-equvalence class are of the same length, we may extend
the notion of length to these classes setting that the length of
$[(\Gamma_1,\ldots, \Gamma_n)\alpha]_\equiv$ also equals $n$.

We now proceed to definitions of components for the canonical
model.

\subsection{$Tree$, $\leq$, and $Hist$}

The first two components of the canonical model $\mathcal{M}$ are
as follows:

\begin{itemize}
    \item $Tree$ is the set of $\equiv$-equvalence classes of
    elements plus $\dag$ and $\ddag$ as additional moments;
    \item We set that both $\dag < m$ and $m \not< \dag$ for every
     $m \in Tree \setminus \{ \dag \}$. We further set that $\ddag$ is only
    $<$-comparable to $\dag$ (in which case we already have $\dag < \ddag$), and for any two $\equiv$-equvalence classes of
    elements $m$ and $m'$, we have that $m < m'$ iff
    there is an element $\xi \in
    m$ such that $\xi$ is a proper initial segment of every element $\tau \in
    m'$. The relation $\leq$ is then defined as the reflexive
    companion to $<$.
\end{itemize}

Before we move on to the choice- and justifications-related
components, let us pause to check that the restraints imposed by
our semantics on $Tree$ and $\leq$ are satisfied:
\begin{lemma}\label{leq}
The relation $\leq$, as defined above, is a partial order on
$Tree$, which satisfies both historical connection and no backward
branching constraints. Moreover, every element in $Tree$, except
for $\ddag$, has at least one immediate $<$-successor.
\end{lemma}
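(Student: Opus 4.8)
The plan is to establish the five requirements in turn, leaning throughout on two structural facts about the construction. \emph{(i)} All elements of a single $\equiv$-class share one length, so length is well defined on classes, and whenever $m < m'$ holds between classes the witnessing $\xi \in m$ is a proper initial segment of each member of $m'$; hence $\mathrm{length}(m) < \mathrm{length}(m')$, i.e. $<$ strictly raises length on classes. \emph{(ii)} For a class $m'$ of length $l \geq 2$ all its members agree on their first $l-1$ coordinates and on the arrow, only the last coordinate ranging over a single $\Box$-class; consequently a sequence of length $k < l$ is a proper initial segment of \emph{one} member of $m'$ exactly when it is a proper initial segment of \emph{every} member. I would record both facts first, since they drive everything else.

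Reflexivity of $\leq$ holds by construction. For antisymmetry it suffices to exclude $m < m'$ together with $m' < m$: between classes this would force $\mathrm{length}(m) < \mathrm{length}(m') < \mathrm{length}(m)$ by \emph{(i)}, while the cases involving $\dag$ (below everything, above nothing) or $\ddag$ (comparable only to $\dag$, and above it) are immediate. For transitivity of $<$, hence of $\leq$, I would reduce to transitivity of the proper-initial-segment relation on individual sequences: given $m < m'$ witnessed by $\xi \in m$ and $m' < m''$ witnessed by $\eta \in m'$, note $\eta \in m'$ makes $\xi$ a proper initial segment of $\eta$, while $\eta$ is a proper initial segment of every member of $m''$; chaining gives $\xi$ a proper initial segment of every member of $m''$, so $m < m''$. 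Transitivity through $\dag$ or $\ddag$ is trivial.

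Historical connection is free, since $\dag$ is a global $\leq$-minimum and $m_2 := \dag$ works for all $m, m_1$. No backward branching is the step I expect to be the main obstacle, and the place where both structural facts are needed. Fixing $m$ with $m_1, m_2 \leq m$, the cases $m \in \{\dag,\ddag\}$, or $m_i = m$, or some $m_i = \dag$ are dispatched directly by the comparability facts, leaving $m_1, m_2$ classes with $m_1, m_2 < m$. Choosing representatives $\xi_1 \in m_1$ and $\xi_2 \in m_2$, each a proper initial segment of a common $\tau \in m$, the two are nested (prefixes of a fixed sequence are ordered by length, and both inherit $\tau$'s arrow). If they have equal length they coincide, whence $m_1 = m_2$; otherwise the shorter, say $\xi_1$, is a proper initial segment of $\xi_2 \in m_2$, and by \emph{(ii)} it is then a proper initial segment of every member of $m_2$, giving $m_1 < m_2$. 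Either way $m_1, m_2$ are $\leq$-comparable.

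Finally, for immediate successors I would extend representatives by one coordinate. Given a class $m = [(\Gamma_1,\ldots,\Gamma_n)\alpha]_\equiv$, Lemma \ref{elementcontinuation} supplies $\Gamma_{n+1}$ with $(\Gamma_1,\ldots,\Gamma_{n+1})\alpha$ an element; the class $m'$ it generates satisfies $m < m'$, the length-$n$ prefix witnessing it, and $m'$ is immediate because any $m''$ with $m < m'' < m'$ would have to be a class with $n < \mathrm{length}(m'') < n+1$ by \emph{(i)}, which is impossible, whereas $m'' \in \{\dag,\ddag\}$ is ruled out by comparability ($m \not< \dag$ and $\ddag$ is $<$-below nothing). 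For $\dag$ the immediate successor is $\ddag$, since $\dag < \ddag$ and nothing lies strictly below $\ddag$. As $\ddag$ has no $<$-successors whatsoever, it is correctly the sole exception.
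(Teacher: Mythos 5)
Your proof is correct and follows essentially the same route as the paper's: the same case analysis over $\dag$, $\ddag$, and equivalence classes, the same use of the shared greatest proper initial segment within a class (your fact \emph{(ii)}) for no backward branching, and the same appeal to Lemma \ref{elementcontinuation} for immediate successors. The only deviations are cosmetic and harmless --- you argue antisymmetry and immediacy via length bookkeeping (your fact \emph{(i)}, which the paper itself records right after the lemma), and you take $\ddag$ rather than a class $[(\Gamma)\alpha]_\equiv$ as $\dag$'s immediate successor.
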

\begin{proof}
Transitivity and reflexivity of $\leq$ are obvious. As for
antisymmetry, assume that we have both $m < m'$ and $m' < m$. Then
$m$ and $m'$ must be equivalence classes of elements. Let $\xi \in
m$ be a proper initial segment of every element in $m'$ and let
$\tau \in m'$ be a proper initial segment of every element in $m$.
It follows that $\xi$ is a proper initial segment of $\tau$ and
also $\tau$ is a proper initial segment of $\xi$, a contradiction.

Historical connection is satisfied since $\dag$ is the
$\leq$-least element of $Tree$. Let us prove the absence of
backward branching. Assume that we have both $m \leq m''$ and $m'
\leq m''$ but neither $m \leq m'$ nor $m' \leq m$ holds. This
means that all the three moments are pairwise different and none
of them is either $\dag$ or $\ddag$, otherwise our assumptions
about them would be immediately falsified. Therefore, all the
three moments are some equivalence classes of elements and we also
have $m \neq m'$, $m < m''$ and $m' < m''$. So let $\xi \in m$ and
$\tau \in m'$ be such that both $\xi$ and $\tau$ are proper
initial segments of every element in $m''$. Then, since $m \neq
m'$, $\xi$ and $\tau$ must be different, hence either $\xi$ must
be a proper initial segment of $\tau$ or $\tau$ must be a proper
initial segment of $\xi$. Assume, wlog, that $\xi$ is a proper
initial segment of $\tau$. Then $\xi$ is included into the
greatest proper initial segment of $\tau$. Let $\tau'$ be any
element in $m'$. It follows from the definition of $\equiv$ that
all the elements within $m'$ share the same greatest proper
initial segment, therefore $\xi$ must be a proper initial segment
of $\tau'$ as well. It follows that $m < m'$, contrary to our
assumptions.

Consider the $<$-successors of a given $m \in Tree$. If $m \neq
\ddag$, then either $m = \dag$, or $m$ is an equivalence class of
elements. If $m = \dag$, then take any $\Gamma \subseteq Form_X$
which is maxiconsistent in $X$ and any $\alpha \in \{ \uparrow,
\downarrow \}$. Then $(\Gamma)\alpha$ is an element and we have
$\dag < [(\Gamma)\alpha]_\equiv$. Moreover, no other moment can be
in between them: this cannot be either $\dag$, or $\ddag$, or an
equivalence class of elements (since the greatest proper initial
segment of $(\Gamma)\alpha$ is empty). If, on the other hand, $m$
is an equivalence class of elements, then assume that $m =
[(\Gamma_1,\ldots, \Gamma_n)\alpha]_\equiv$. By Lemma
\ref{elementcontinuation}, we know that for some $\Delta \subseteq
Form$, the tuple $(\Gamma_1,\ldots, \Gamma_n, \Delta)\alpha$ must
be an element. But then we must have
$$
[(\Gamma_1,\ldots, \Gamma_n)\alpha]_\equiv < [(\Gamma_1,\ldots,
\Gamma_n, \Delta)\alpha]_\equiv,
$$
and again, no moments are strictly in between them since
$(\Gamma_1,\ldots, \Gamma_n)\alpha$ is the greatest proper initial
segment of $(\Gamma_1,\ldots, \Gamma_n, \Delta)\alpha$.
\end{proof}

Moreover, it is easy to see that if $m, m' \in Tree$ are two
equivalence classes of elements, and $m < m'$, then the length of
$m$ is less than the length of $m'$, and if $m'$ is an immediate
$<$-successor of $m$, then length of $m$ is the length of $m'$
minus one.

Before we move on, let us have a quick look into the structure of
histories induced by $Tree$ and $\leq$. Lemma \ref{leq} shows that
we must have the history $(\dag,\ddag)$ plus a bunch of infinite
histories of the form $(\dag, m_1,\ldots, m_n,\ldots)$, ordered in
the type of $\omega$, where, for $n \geq 1$, $m_n$ is an
equivalence class of elements of length $n$ and every next element
is the immediate $<$-successor of the previous one. Every such
infinite history we can also represent in the form $(\dag,
\xi_1,\ldots, \xi_n,\ldots)$ such that for every $n \geq 1$:
\begin{itemize}
    \item $\xi_n \in m_n$;
    \item $\xi_n$ is the greatest proper initial segment of
    $\xi_{n + 1}$.
\end{itemize}
Moreover, one can show that such a representation, for a given
history of the form $(\dag, m_1,\ldots, m_n,\ldots)$, is unique.
Indeed, suppose that $(\dag, \xi_1,\ldots, \xi_n,\ldots)$ and
$(\dag, \xi'_1,\ldots, \xi'_n,\ldots)$ are two different
representations for $(\dag, m_1,\ldots, m_n,\ldots)$. Then let
$i\geq 1$ be the first natural number such that $\xi_i \neq
\xi'_i$. Consider $m_{i + 1}$. We have  $\xi_{i + 1}, \xi'_{i + 1}
\in m_{i + 1}$ so that $\xi_{i + 1} \equiv \xi'_{i + 1}$. Since
$\xi_i$ and $\xi'_i$ are the greatest proper initial segments of
$\xi_{i + 1}$, $\xi'_{i + 1}$, respectively, the greatest proper
initial segments of $\xi_{i + 1}$ and $\xi'_{i + 1}$ are non-empty
and, by $\xi_{i + 1} \equiv \xi'_{i + 1}$, must coincide, which
cannot be the case since $\xi_i \neq \xi'_i$.

Therefore, if $h = (\dag, m_1,\ldots, m_n,\ldots)$ is a history in
$Tree$ and $(\dag, \xi_1,\ldots, \xi_n,\ldots)$ is the unique
representation of $h$ as a sequence of elements, we define $m_n
\cap h$ to be $\xi_n$.

All the above statements admit of an inversion. Not only can every
history be uniquely represented as a sequence of elements, but
also every sequence of elements of an appropriate form represents
a unique history in $\mathcal{M}$. Not only is every intersection
of an equivalence class of elements and a history an element in
this class, but also, conversely, every element defines the
intersection of at least one history with the equivalence class
induced by this element. More precisely:
\begin{lemma}\label{hist}
The following statements are true:
\begin{enumerate}
\item Fix a sequence $(\dag, \xi_1,\ldots, \xi_n,\ldots)$ where
all of $\xi_1,\ldots, \xi_n,\ldots $ are elements and, for every
natural $n$, $\xi_n$ is the greatest proper initial segment of
    $\xi_{n + 1}$. Then there is a unique history $h = (\dag, m_1,\ldots,
    m_n,\ldots)$ in $\mathcal{M}$ such that for all natural $n$ it
    is true that $\xi_n \in m_n$ (thus $\xi_n = m_n \cap h$).

\item Let $\xi$ be an element. Then there is at least one history
$h \in H_{[\xi]_\equiv}$ such that $[\xi]_\equiv \cap h = \xi$.
\end{enumerate}
\end{lemma}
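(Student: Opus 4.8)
The plan is to prove the two parts separately, with Part 1 essentially establishing that an infinite chain of elements glued along the ``greatest-proper-initial-segment'' relation determines exactly one history, and Part 2 using Lemma \ref{elementcontinuation} to manufacture such a chain starting from any given element.

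For Part 1, I would first argue \emph{existence}. Given the sequence $(\dag, \xi_1,\ldots,\xi_n,\ldots)$, set $m_n := [\xi_n]_\equiv$ for each $n$; since $\xi_n$ is the greatest proper initial segment of $\xi_{n+1}$, the definition of $<$ immediately gives $m_n < m_{n+1}$, and in fact $m_{n+1}$ is an immediate $<$-successor of $m_n$ (because the length of $\xi_{n+1}$ is one more than that of $\xi_n$, using the length observation recorded after Lemma \ref{leq}). Since $\dag$ is the $\leq$-least moment and $\dag < m_1$ with nothing strictly in between (the greatest proper initial segment of $\xi_1$ is empty), the chain $(\dag, m_1,\ldots,m_n,\ldots)$ is a $\leq$-chain in which each moment is the immediate successor of the previous one. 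I then need to verify that this chain is \emph{maximal}, i.e. a genuine history: it cannot be extended downward because $\dag$ is least, and it cannot be extended upward or have a moment inserted, since consecutive moments are immediate successors and $\ddag$ is $<$-comparable only to $\dag$. Maximality together with no-backward-branching (Lemma \ref{leq}) gives that $h$ is a maximal chain, hence $h \in Hist$. That $\xi_n \in m_n$ holds by construction, and $\xi_n = m_n \cap h$ then follows from the uniqueness-of-representation argument already carried out in the text immediately preceding the lemma.

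For \emph{uniqueness} in Part 1, suppose $h'$ is another history with $\xi_n \in m'_n$ and $m'_n \cap h' = \xi_n$ for all $n$; since the moments $m_n = [\xi_n]_\equiv$ are forced by the requirement $\xi_n \in m_n$, the sequence of moments of $h'$ must agree with that of $h$ at every finite level, so $h' = h$ as sets of moments, hence as histories. The one point to handle with care is that a history is a maximal $\leq$-chain of \emph{moments}, whereas the data $(\dag,\xi_1,\ldots)$ is a chain of \emph{elements}; the bridge is exactly the uniqueness of the element-representation of a history established just before the lemma statement, which I would cite rather than reprove.

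For Part 2, given an element $\xi = (\Gamma_1,\ldots,\Gamma_k)\alpha$ of length $k$, I would first build a descending tower of its initial segments $\xi_1,\ldots,\xi_k = \xi$, where $\xi_i := (\Gamma_1,\ldots,\Gamma_i)\alpha$, so that each $\xi_i$ is the greatest proper initial segment of $\xi_{i+1}$. Then I extend upward: repeatedly applying Lemma \ref{elementcontinuation} yields elements $\xi_{k+1}, \xi_{k+2},\ldots$ with each $\xi_{n}$ the greatest proper initial segment of $\xi_{n+1}$, producing an infinite sequence $(\dag,\xi_1,\ldots,\xi_n,\ldots)$ of the exact shape required by Part 1. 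Applying Part 1 to this sequence gives a history $h$ with $\xi_n = m_n \cap h$ for all $n$; in particular $[\xi]_\equiv = [\xi_k]_\equiv = m_k \in h$, so $h \in H_{[\xi]_\equiv}$, and $[\xi]_\equiv \cap h = \xi_k = \xi$, as desired. The main obstacle I anticipate is not any single deep step but the bookkeeping around immediate successors and maximality in Part 1 — making sure that the gluing of elements really produces a \emph{maximal} chain of moments and that $\ddag$ cannot sneak in — so I would state the length/immediate-successor facts explicitly and lean on Lemma \ref{leq} and the pre-lemma uniqueness discussion to keep that verification short.
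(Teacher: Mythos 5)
Your proposal is correct and follows essentially the same route as the paper: Part 1 is proved by setting $m_n := [\xi_n]_\equiv$ (the paper simply declares the resulting chain ``obviously a history''), and Part 2 by taking the initial segments of $\xi$ as the lower part of the chain, extending it upward indefinitely, and invoking Part 1. The only cosmetic differences are that the paper splits Part 2 into the cases of length $1$ and length $>1$ and extends the chain by explicitly repeating a single component (justified via the proof of Lemma \ref{elementcontinuation}) rather than iterating Lemma \ref{elementcontinuation} abstractly, and that the maximality and uniqueness bookkeeping in Part 1, which you spell out, is left implicit in the paper.
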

\begin{proof}
As for Part 1, consider $(\dag, [\xi_1]_\equiv,\ldots,
[\xi_n]_\equiv,\ldots)$; it is obviously a history in
$\mathcal{M}$ and we also have $\xi_n \in [\xi_n]_\equiv$ for all
natural $n$.

As for Part 2, we have to consider two cases.

\emph{Case 1}. The length of $\xi$ equals $1$, so that $\xi =
(\Gamma)\alpha$ for appropriate $\Gamma$ and $\alpha$. Then we
know, by the proof of Lemma \ref{elementcontinuation} above, that
for some $\Delta \subseteq Form_X$ the sequence:
\begin{align*}
    \xi_1 &:= \xi = (\Gamma)\alpha;\\
    \xi_2 &:= (\Gamma, \Delta)\alpha;\\
    &\ldots;\\
    \xi_{n + 1} &:= (\Gamma, \underbrace{\Delta, \ldots, \Delta}_{n\textup{ times}})\alpha;\\
    &\ldots;
\end{align*}
is a sequence of elements in which every element is the greatest
proper initial segment of the next one. Therefore, by Part 1,
there must be a history $h$ in $\mathcal{M}$ such that $h = (\dag,
[\xi_1]_\equiv,\ldots, [\xi_n]_\equiv,\ldots)$ and also $\xi =
\xi_1 = [\xi_1]_\equiv \cap h$.

\emph{Case 2}. The length of $\xi$ is greater than $1$, so that
$\xi = (\Gamma_1,\ldots, \Gamma_n)\alpha$ for appropriate $n > 1$,
$\Gamma_1,\ldots, \Gamma_n$ and $\alpha$. Then we define the
following sequence of elements:
\begin{align*}
    \xi_1 &:= (\Gamma_1)\alpha;\\
    \xi_2 &:= (\Gamma_1, \Gamma_2)\alpha;\\
    &\ldots;\\
    \xi_n &:= \xi = (\Gamma_1,\ldots, \Gamma_n)\alpha;\\
    \xi_{n + 1} &:= (\Gamma_1,\ldots, \Gamma_n, \Gamma_n)\alpha;\\
    &\ldots;\\
    \xi_{n + k} &:= (\Gamma_1,\ldots, \Gamma_n, \underbrace{\Gamma_n, \ldots, \Gamma_n}_{k\textup{ times}})\alpha;\\
    &\ldots.
\end{align*}
Again, it is easy to see that every element in this sequence is
the greatest proper initial segment of the next one, so that,
arguing as in the previous case, we get that  $h = (\dag,
[\xi_1]_\equiv,\ldots, [\xi_n]_\equiv,\ldots, [\xi_{n +
k}]_\equiv,\ldots)$ is a history in $\mathcal{M}$ and $\xi = \xi_n
= [\xi_n]_\equiv \cap h$.
\end{proof}

\subsection{$Choice$}

We now define the choice structures of our canonical model:
\begin{itemize}
    \item $Choice^m_j = H_m$, if $m \in \{ \dag, \ddag \}$;
    \item $Choice^m_j(h) = \{ h' \mid h' \in H_m,\,
    (\forall A \in Form)([j]A \in end(h \cap m) \Rightarrow A \in end(h' \cap
    m))\}$, if $m$ is an equivalence class of elements.
\end{itemize}
Since for every $j \in Ag$, $[j]$ is an S5-modality, $Choice$
induces a partition on $H_m$ for every given $m \in Tree$. We
check that the choice function verifies the relevant semantic
constraints:
\begin{lemma}\label{choice}
The tuple $\langle Tree, \leq, Choice\rangle$, as defined above,
verifies both the independence of agents and the no choice between
undivided histories constraints.
\end{lemma}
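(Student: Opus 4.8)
The plan is to dispatch the two trivial moments first and then concentrate all the effort on the moments that are $\equiv$-classes of elements. For $m \in \{\dag,\ddag\}$ the partition $Choice^m_j$ has $H_m$ as its only cell, so any choice function $f$ gives $\bigcap_{j \in Ag}f(j) = H_m \neq \emptyset$ (every moment carries at least one history), and for no choice between undivided histories either the antecedent $m < m'$ fails, as happens when $m = \ddag$ since $\ddag$ has no proper $<$-successors, or else $m = \dag$ and both cells equal $H_\dag$. So only $\equiv$-classes of elements require genuine argument.

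For no choice between undivided histories at an $\equiv$-class $m$ of length $k$, I would take $m < m'$ and $h,h' \in H_m \cap H_{m'}$ and use the unique representation of a history as a sequence of elements together with the remark (after Lemma~\ref{leq}) that an immediate $<$-successor increases length by exactly one. This shows $h \cap m$ is the length-$k$ initial segment of $h \cap m'$, and likewise for $h'$. Since $h \cap m'$ and $h' \cap m'$ both belong to $m'$ they are $\equiv$-related, and the definition of $\equiv$ for length $> 1$ forces $\equiv$-related elements to agree on all but their last component, hence to share every shorter proper initial segment. Therefore $h \cap m = h' \cap m$, so $end(h\cap m) = end(h'\cap m)$ and $Choice^m_j(h) = Choice^m_j(h')$. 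This half is routine.

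The substantive half is independence of agents at an $\equiv$-class $m$ of elements of length $n$. Given $f$ with $f(j) = Choice^m_j(h_j)$, I would write $\xi_j := h_j \cap m \in m$ and $\Gamma^{(j)} := end(\xi_j)$; all $\xi_j$ share a common initial segment $(\Gamma_1,\dots,\Gamma_{n-1})$ and sign $\alpha$, while their end components $(\Gamma^{(j)})\alpha$ are pairwise $\equiv$ and so, $\Box$ being $S5$, agree on all $\Box$- and $\Diamond$-formulas. I would then form
$$
\Sigma := \{A \mid \Box A \in \Gamma^{(1)}\} \cup \bigcup_{j \in Ag}\{A \mid [j]A \in \Gamma^{(j)}\}
$$
and prove it consistent; this is the main obstacle and the place where \eqref{A3} is needed. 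Were $\Sigma$ inconsistent, finitely many witnesses would yield $\vdash (B \wedge \bigwedge_{j} C^j) \to \bot$ with $\Box B \in \Gamma^{(1)}$ and $[j]C^j \in \Gamma^{(j)}$ for each $j$ (empty conjunctions read as $\top$). Reflexivity of $\Box$ gives $\Diamond[j]C^j \in \Gamma^{(j)}$, and since $\Diamond$-formulas are shared across the $\equiv$-class, $\Diamond[j]C^j \in \Gamma^{(1)}$ for every $j$. As the agents are pairwise distinct, \eqref{A3} yields $\Diamond(\bigwedge_{j}[j]C^j) \in \Gamma^{(1)}$, and reflexivity of each $[j]$ then gives $\Diamond(\bigwedge_{j}C^j) \in \Gamma^{(1)}$; but $\Box B \in \Gamma^{(1)}$ and $\vdash B \to \neg\bigwedge_{j}C^j$ force $\neg\Diamond(\bigwedge_{j}C^j) \in \Gamma^{(1)}$, a contradiction.

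Finally I would extend $\Sigma$ to a $\Delta$ maxiconsistent in $X$ by Lemma~\ref{elementaryconsistency}.1 and set $\eta := (\Gamma_1,\dots,\Gamma_{n-1},\Delta)\alpha$. Since $\{A \mid \Box A \in \Gamma^{(1)}\} \subseteq \Delta$, the $S5$-symmetry of $\equiv$ gives $(\Delta)\alpha \equiv (\Gamma^{(j)})\alpha$, so $\eta \equiv \xi_j$ and $\eta \in m$. That $\eta$ is genuinely an element then follows because, for $n>1$, the conditions of Definition~\ref{element} on the last component demand only that certain $KA$, $Proven(A)$ and $K\Box\neg Prove(j,A)$ formulas lie in $\Delta$; each such formula already belongs to $\Gamma^{(j)}$ (as $\xi_j$ is an element) and is $\Box$-persistent by Lemma~\ref{theorems}.2 and Lemma~\ref{theorems}.5, hence transfers to $\Delta$ via $(\Gamma^{(j)})\alpha \equiv (\Delta)\alpha$ (for $n=1$ these conditions are vacuous). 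Since also $\{A \mid [j]A \in \Gamma^{(j)}\} \subseteq \Delta = end(\eta)$, Lemma~\ref{hist}.2 supplies a history $h^{*} \in H_m$ with $h^{*} \cap m = \eta$, and this $h^{*}$ lies in $Choice^m_j(h_j) = f(j)$ for every $j$, so $\bigcap_{j \in Ag} f(j) \neq \emptyset$.
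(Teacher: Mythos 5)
Your proof is correct and follows essentially the same route as the paper's: the trivial dismissal of $\dag$ and $\ddag$, the shared-greatest-proper-initial-segment argument for no choice between undivided histories, and for independence of agents the same \eqref{A3}-based consistency argument using the sharing of $\Box$- and $\Diamond$-formulas across an $\equiv$-class, followed by extension to a set maxiconsistent in $X$, verification of elementhood of the new tuple via $\Box$-persistence of the $K$-, $Proven$- and $K\Box\neg Prove$-formulas (Lemma \ref{theorems}.2 and \ref{theorems}.5), and Lemma \ref{hist}.2 to produce the witnessing history. The only cosmetic difference is that your auxiliary set $\Sigma$ collects the unboxed formulas where the paper's $\Delta \cup \bigcup_{j}\Delta_j$ keeps the boxed ones, which makes your final membership checks immediate at the cost of re-boxing inside the consistency argument.
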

\begin{proof}
We first tackle no choice between undivided histories. Consider a
moment $m$ and two histories $h, h' \in H_m$ such that $h$ and
$h'$ are undivided at $m$. Since the agents' choices are only
non-vacuous at moments represented by equivalence classes  of
elements, we may safely assume that $m$ is such a class. Since $h$
and $h'$ are undivided at $m$, this means that there is a moment
$m'$ such that $m < m'$ and $m'$ is shared by $h$ and $h'$. Hence
we know that also $m'$ is some equivalence class of elements.
Suppose the length of $m$ is $n$ and the length of $m'$ is $n'$.
Then $n < n'$, also $h \cap m$ is the initial segment of length
$n$ of $h \cap m'$, and similarly, $h' \cap m$ is the initial
segment of length $n$ of $h' \cap m'$. But both $h \cap m'$ and
$h' \cap m'$ are, by definition, in $m'$, therefore, they must
share the greatest proper initial segment. Hence, their initial
segments of length $n$ must coincide as well, and we must have $h
\cap m = h' \cap m$, whence $end(h \cap m) = end(h' \cap m)$. Now,
if $j \in Ag$ and $[j]A \in end(h \cap m)$, then, by \eqref{A1}
and maxiconsistency of $end(h \cap m)$ in $X$, we will have also
$A \in end(h \cap m) = end(h' \cap m)$, and thus $h' \in
Choice^m_j(h)$, so that $Choice^m_j(h) = Choice^m_j(h')$ since
$Choice$ is a partition of $H_m$.

Consider, next, the independence of agents. Let $m \in Tree$ and
let $f$ be a function on $Ag$ such that $\forall j \in Ag(f(j) \in
Choice^m_j)$. We are going to show that in this case $\bigcap_{j
\in Ag}f(j) \neq \emptyset$. If $m \in \{ \dag, \ddag \}$, then
this is obvious, since every agent will have a vacuous choice. We
treat the case when $m$ is an equivalence class of elements.
Assume that $m = [(\Gamma_1,\ldots, \Gamma_{n +
1})\alpha]_\equiv$. We have two cases to consider:

\emph{Case 1}. $n = 0$. By \eqref{A1} we know that there is a set
$\Delta$ of formulas of the form $\Box A$ which is shared by all
sets of the form $end(\xi)$ with $\xi \in m$ in the sense that if
$\xi \in m$, then $\Box A \in end(\xi)$ iff $\Box A \in \Delta$.
By the same axiom scheme and Lemma \ref{hist}.2, we also know that
for every $j \in Ag$ there is set $\Delta_j$ of formulas of the
form $[j]A$ which is shared by all sets of the form $end(\xi)$
such that $\exists h(h \in f(j) \wedge \xi = m \cap h)$. More
precisely:
$$
\xi \in m \Rightarrow (\exists h(h \in f(j) \wedge \xi = m \cap h)
\Leftrightarrow (\forall A \in Form)([j]A \in end(\xi)
\Leftrightarrow [j]A \in \Delta_j)).
$$

We now consider the set $\Delta \cup \bigcup\{ \Delta_j\mid j \in
Ag \}$ and show its consistency. Indeed, if this set is
inconsistent, then, wlog, we would have a provable formula of the
following form:

\begin{equation}\label{E:c1}
\vdash (\Box A \wedge \bigwedge_{j \in Ag}[j]A_j) \to \bot.
\end{equation}

But then, choose for every $j \in Ag$ an element $\xi_j \in m$
such that

\noindent$(\forall A \in Form)([j]A \in end(\xi_j) \Leftrightarrow
[j]A \in \Delta_j)$. This is possible, since we may simply choose
an arbitrary $h_j \in f(j)$ and set $\xi_j: = m \cap h_j$. Then we
will have $[j]A_j \in \xi_j$ for every $j \in Ag$. Next, consider
$\Gamma_1$. Since $m = [(\Gamma_1)\alpha]_\equiv$ and $\Box$ is an
S5-modality, we must have:
$$
\{ \Diamond[j]A_j \in Ag \} \subseteq \Gamma_1,
$$
whence, by Lemma \ref{elementaryconsistency}.5:
$$
\bigwedge_{j \in Ag}\Diamond[j]A_j \in \Gamma_1,
$$
and further, by \eqref{A3} and Lemma
\ref{elementaryconsistency}.4:
$$
\Diamond\bigwedge_{j \in Ag}[j]A_j \in \Gamma_1.
$$
Also, by definition of $\Delta$ and the fact that $m =
[(\Gamma_1)\alpha]_\equiv$, we get successively:
$$
\Box A \in  \Gamma_1,
$$
then, by Lemma \ref{elementaryconsistency}.5:
$$
\Box A \wedge \Diamond\bigwedge_{j \in Ag}[j]A_j \in \Gamma_1,
$$
and finally, by the fact that $\Box$ is an S5-modality:

\begin{equation}\label{E:c2}
\Diamond(\Box A \wedge \bigwedge_{j \in Ag}[j]A_j) \in \Gamma_1.
\end{equation}

From \eqref{E:c1}, together with \eqref{E:c2}, it follows by S5
reasoning for $\Box$ that $\Diamond\bot \in \Gamma_1$, so that,
again by S5 properties of $\Box$ and Lemma
\ref{elementaryconsistency}.4, it follows that $\bot \in
\Gamma_1$, which is in contradiction with maxiconsistency of
$\Gamma_1$ in $X$.

Hence $\Delta \cup \bigcup\{ \Delta_j\mid j \in Ag \}$ is
consistent, and since it is in $X$, we can extend it to a set
$\Xi$ which is maxiconsistent in $X$. We now consider
$(\Xi)\alpha$ which is obviously an element, and since, moreover
$\Delta \subseteq \Xi$, then also $(\Xi)\alpha \in m$. By Lemma
\ref{hist}.2, we can choose a history $g$ such that $(\Xi)\alpha =
g \cap m$. We also know that for every $j \in Ag$, there is a
history $h_j \in f(j)$ such that $h_j \cap m = \xi_j$ by the
choice of $\xi_j$. Therefore, for every $j \in Ag$,
$Choice^m_j(h_j) = f(j)$. Also, if $[j]A \in end(\xi_j) = end(h_j
\cap m)$, then $[j]A \in \Delta_j$, hence $[j]A \in \Xi = end(g
\cap m)$, therefore, by \eqref{A1}, $A \in end(g \cap m)$. Thus we
get that $g \in \bigcap_{j \in Ag}Choice^m_j(h_j) = \bigcap_{j \in
Ag}f_j$ so that the independence of agents is verified.

\emph{Case 2}. $n > 0$. For the most part, we can re-use our
reasoning from Case 1. We again form the sets $\Delta$, $\{
\Delta_j\mid j \in Ag \}$ and $\Xi$, and consider element
$(\Gamma_1,\ldots, \Gamma_n, \Xi)\alpha \in m$. We then choose a
history $g \in H_m$ for which we have $(\Gamma_1,\ldots, \Gamma_n,
\Xi)\alpha = m \cap g$ and show that $g \in \bigcap_{j \in
Ag}Choice^m_j(h_j) = \bigcap_{j \in Ag}f_j$.

The only new ingredient is that now seeing that $(\Gamma_1,\ldots,
\Gamma_n, \Xi)\alpha$ is in fact an element is much less trivial
and has to be argued separately. We show this as follows. If $KA
\in \Gamma_n$, then $KA \in \Gamma_{n + 1}$ by definition of an
element. But then $\Box KA \in \Gamma_{n + 1}$ by Lemma
\ref{theorems}.5 and maxiconsistency of $\Gamma_{n + 1}$ in $X$,
whence $\Box KA \in \Delta$ and, therefore, $\Box KA \in \Xi$. By
\eqref{A1} and maxiconsistency of $\Xi$ we get then $KA \in \Xi$.
Similarly, if $Prove(j,A) \in \Gamma_1$, then $Proven(A) \in
\Gamma_{n + 1}$ by definition of an element. But then $\Box
Proven(A) \in \Gamma_{n + 1}$, by Lemma \ref{theorems}.2 and
maxiconsistency of $\Gamma_{n + 1}$  in $X$, whence $\Box
Proven(A) \in \Delta$ and, therefore, $\Box Proven(A) \in \Xi$. By
\eqref{A1} and maxiconsistency of $\Xi$ in $X$, we get then
$Proven(A) \in \Xi$. Finally, if $A \in Form_X$ and $j \in Ag$,
then, since $n > 0$, we must have $K\Box\neg Prove(j,A) \in
\Gamma_{n + 1}$ by definition of an element, whence $\Box
K\Box\neg Prove(j,A) \in \Gamma_{n + 1}$ by \eqref{A8} and
maxiconsistency of $\Gamma_{n + 1}$  in $X$, so that $\Box
K\Box\neg Prove(j,A) \in \Delta$ and, further, $\Box K\Box\neg
Prove(j,A) \in \Xi$. By \eqref{A1} and maxiconsistency of $\Xi$ in
$X$, we get then that $K\Box\neg Prove(j,A) \in \Xi$. Thus
$(\Gamma_1,\ldots, \Gamma_n, \Xi)\alpha$ is an element, and the
rest is shown exactly as in Case 1.
\end{proof}

\subsection{$R$ and $\mathcal{E}$}

We now define the justifications-related components of our
canonical model. We first define $R$ as follows:
\begin{itemize}
    \item $R([(\Gamma)\alpha]_\equiv, m')\Leftrightarrow (m' \in Tree \setminus \{ \dag,\ddag \})\&$

    $\qquad\qquad\qquad\qquad\qquad\quad\&(\forall
    \tau \in m')(\forall A \in Form_X)(KA \in \Gamma \Rightarrow KA \in
    end(\tau))$;
    \item If $n > 1$, then\begin{align*}
R([(\Gamma_1,\ldots, \Gamma_n)&\alpha]_\equiv, m')
\Leftrightarrow\\
&\Leftrightarrow (\exists \Delta_1,\ldots, \Delta_k \subseteq
Form_X)(k > 0 \& m' =
[(\Gamma_ 1, \Delta_1,\ldots, \Delta_k)\alpha]_\equiv \&\\
 &\qquad\qquad\qquad\qquad\qquad\&(\forall A \in Form_X)(KA \in \Gamma_n \Rightarrow KA \in
    \Delta_k));
\end{align*}
    \item $R(\dag,m)$, for all $m \in Tree$;
    \item $R(\ddag,m) \Leftrightarrow m = \ddag$.
\end{itemize}

Now, for the definition of $\mathcal{E}$:
\begin{itemize}
    \item For
    all $t \in Pol$: $\mathcal{E}(\dag, t) = \mathcal{E}(\ddag, t) = \{ A \in Form \mid
\vdash t\co A \}$;

\item For all $t \in Pol_X$ and $m \in Tree \setminus \{ \dag,
\ddag \}$:
\begin{align*}
    (\forall A \in Form)(&A \in \mathcal{E}(m, t)
    \Leftrightarrow\\
    &\Leftrightarrow (\exists
    t_1\co B_1)\ldots(\exists t_n\co B_n)((\forall \xi \in m)(t_1\co B_1,\ldots,t_n\co B_n \in
    end(\xi)) \&\\
    &\qquad\qquad\qquad\qquad\qquad\qquad\quad\& \vdash (t_1\co B_1\wedge\ldots \wedge t_n\co B_n) \to t\co A));
\end{align*}
    \item $(\forall A \in Form_X)(\{ A \} = \mathcal{E}(m, y_{(j,
    A)})= \mathcal{E}(m, w_A) = \mathcal{E}(m, u_A))$, for all $m \in Tree \setminus \{ \dag, \ddag \}$ and $j \in Ag$;
    \item $(\forall A \in Form_X)(A \in \mathcal{E}(m, z)
     \Leftrightarrow (\forall \xi \in m)(Proven(A) \in end(\xi)))$, for all $m \in Tree \setminus \{ \dag, \ddag \}$;
    \item $\mathcal{E}(m, t) = Form$, if $m \in Tree \setminus \{ \dag, \ddag \}$ and $t
    \in Pol \setminus(Pol_X \cup Y \cup W \cup U \cup \{ z \})$.
\end{itemize}
We start by mentioning a straightforward corollary to the above
definition:
\begin{lemma}\label{proven}
For all $m \in Tree$ and $t \in Pol$ it is true that $\{ A \in
Form \mid \vdash t\co A \} \subseteq \mathcal{E}(m,t)$.
\end{lemma}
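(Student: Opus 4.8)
The plan is to verify the inclusion by running through the clauses defining $\mathcal{E}$ one at a time, since that definition is itself a case split on $m$ and $t$. Three groups of cases hand us the inclusion almost for free. If $m \in \{ \dag, \ddag \}$, then by definition $\mathcal{E}(m,t) = \{ A \in Form \mid \vdash t\co A \}$, so the desired inclusion in fact holds as an equality. If $m$ is an equivalence class of elements and $t \in Pol \setminus (Pol_X \cup Y \cup W \cup U \cup \{ z \})$, then $\mathcal{E}(m,t) = Form$ and the inclusion is trivial.

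The next group is when $m$ is an equivalence class of elements and $t$ is one of the proof variables $y_{(j,A)}$, $w_A$, $u_A$, or $t = z$. Here I would argue that the left-hand set is simply empty: since each such $t$ is a proof variable, Lemma \ref{theorems}.3 gives $\not\vdash t\co A$ for every $A \in Form$, so $\{ A \in Form \mid \vdash t\co A \} = \emptyset \subseteq \mathcal{E}(m,t)$.

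The only genuine case is $m$ an equivalence class of elements together with $t \in Pol_X$. Suppose $\vdash t\co A$ for some $A \in Form$. I would produce a single witness formula to feed into the existential clause defining $\mathcal{E}(m,t)$: fix any $p \in Var$ and any $c \in Const$, and note that by \eqref{R3} (applied to the \eqref{A0}-instance $p \to p$) we have $\vdash c\co(p \to p)$, while $c\co(p \to p) \in Form_X$ since it contains no proof variables at all. Being provable and lying in $Form_X$, it belongs to every set maxiconsistent in $X$ (otherwise its negation would, by Lemma \ref{elementaryconsistency}.2, belong to that set and render it inconsistent), hence $c\co(p \to p) \in end(\xi)$ for every $\xi \in m$. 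Finally, $\vdash t\co A$ gives $\vdash c\co(p \to p) \to t\co A$ trivially. Taking $n = 1$ and $t_1\co B_1 := c\co(p \to p)$ in the defining clause then yields $A \in \mathcal{E}(m,t)$, as required.

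The step I expect to be the main (and essentially only) subtlety is precisely this last witness construction. The naive move would be to use $t\co A$ itself as the witness $t_1\co B_1$, but witnesses are required to lie in $end(\xi) \subseteq Form_X$, and $t\co A$ need not be in $Form_X$ when $A \notin Form_X$ — which is permitted, since the clause for $t \in Pol_X$ quantifies $A$ over all of $Form$. Routing around this by choosing a \emph{fixed} provable $Form_X$-formula as the witness and relegating $t\co A$ to the consequent of the implication is the one point that requires care; everything else is bookkeeping over the definition.
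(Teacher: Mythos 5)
Your proof is correct, and it follows the same overall decomposition as the paper's: a case split tracking the clauses defining $\mathcal{E}$, with the cases $m \in \{\dag,\ddag\}$, $t \in Pol \setminus (Pol_X \cup Y \cup W \cup U \cup \{ z \})$, and $t \in Y \cup W \cup U \cup \{ z \}$ (via Lemma \ref{theorems}.3) handled exactly as in the paper. Where you diverge is the only substantive case, $t \in Pol_X$. The paper treats $\vdash t\co A$ as a \emph{border case} of the defining clause: the existential quantifier over $t_1\co B_1,\ldots,t_n\co B_n$ is read as permitting $n = 0$, so that $t\co A$ follows from the empty conjunction and no witnesses in $end(\xi)$ are needed at all. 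You instead decline to rely on the $n = 0$ reading and manufacture an explicit singleton witness: $c\co(p \to p)$, provable by \eqref{R3}, proof-variable-free and hence in $Form_X$, therefore (being provable) a member of every set maxiconsistent in $X$, in particular of $end(\xi)$ for every $\xi \in m$; then $\vdash c\co(p \to p) \to t\co A$ is immediate from $\vdash t\co A$. This is a mildly different and more robust handling of that case: it goes through even under the strict reading of the definition in which at least one conjunct is required, whereas the paper's one-line argument is shorter but depends on the empty-conjunction convention. Your diagnosis of why the naive witness $t\co A$ fails (it need not lie in $Form_X$, since the clause quantifies $A$ over all of $Form$) is also correct and identifies a real subtlety. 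One small repair: whether $p \to p$ is literally an instance of \eqref{A0} depends on the unspecified axiomatization of classical propositional logic; to be safe, take any instance of an actual \eqref{A0} scheme built from propositional variables only, e.g. $p \to (q \to p)$ --- the rest of your argument is unchanged.
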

\begin{proof}
This holds simply by the definition of $\mathcal{E}$ when $m \in
\{ \dag, \ddag \}$. If $m \in Tree \setminus \{ \dag, \ddag \}$,
then we have another obvious case for $t \in Pol \setminus(Pol_X
\cup Y \cup W \cup U \cup \{ z \})$.

If $t \in Pol_X$, and $\vdash t\co A$, then we have just a border
case in the definition of $\mathcal{E}(m,t)$, with $t\co A$
following from the empty conjunction of elements present in
$end(\xi)$ for every $\xi \in m$.

Finally, if $t \in Y \cup W \cup U \cup \{ z \}$, then $t \in
PVar$, therefore, by Lemma \ref{theorems}.3, we must have:
$$
\{ A \in Form \mid \vdash t\co A \} = \emptyset \subseteq
\mathcal{E}(m,t).
$$
\end{proof}

Note that since we know that for every $c \in PConst$ and every
instance $A$ of one of the axiom schemes in the list
\eqref{A0}--\eqref{A13}, it is true that $\vdash c\co A$  (by
\eqref{R3}), it follows, among other things, that the
above-defined function $\mathcal{E}$ satisfies the additional
normality condition on jstit models.

It is straightforward to check that $R$, as defined above, is a
preorder on $Tree$, using \eqref{A7} and \eqref{A8}. Let us
briefly look into why the future always matters constraint is
verified as well. Assume $m \in Tree$. If $m = \dag$, then it is
connected to all the elements in $Tree$ by both $\leq$ and $R$,
and if $m = \ddag$, then it is connected only to itself by both
$\leq$ and $R$, so these moments cannot falsify the constraint. So
let us assume that $m$ is a class of equivalence generated by some
element, say $m = [(\Gamma_1,\ldots, \Gamma_n)\alpha]_\equiv$. If
$m \leq m'$, then $m'$ must be an equivalence class as well, and
$(\Gamma_1,\ldots, \Gamma_n)\alpha$ must be an initial segment of
every element in $m'$, so that we may assume, wlog, that $m' =
[(\Gamma_1,\ldots, \Gamma_k)\alpha]_\equiv$ for some $k \geq n$.
In particular, if $n
> 1$, then $k - 1 > 0$. But then take an arbitrary $A \in Form_X$. If $KA \in \Gamma_n$, then, since
$(\Gamma_1,\ldots, \Gamma_k)\alpha$ is an element, $KA \in
\Gamma_k$. By Lemma \ref{theorems}.5 and maxiconsistency of
$\Gamma_k$ in $X$ we must have then $\Box KA \in \Gamma_k$. Now,
by definition of $\equiv$, we get $KA \in end(\tau)$ for any given
$\tau \in m'$. It follows then that $R(m, m')$ as desired.

We further check that the semantical constraints for $\mathcal{E}$
are verified:
\begin{lemma}\label{e}
The function $\mathcal{E}$, as defined above, satisfies both the
monotonicity of evidence and the evidence closure properties.
\end{lemma}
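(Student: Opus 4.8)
The plan is to verify the two constraints separately, in each case splitting on the type of the moment $m$ and, where needed, on the type of the proof polynomial involved. For monotonicity I would first dispose of the special moments: if $m = \ddag$ then $R(\ddag, m')$ forces $m' = \ddag$ and the inclusion is an equality, while if $m = \dag$ then $\mathcal{E}(\dag, t) = \{A \mid\, \vdash t\co A\}$ and Lemma \ref{proven} gives $\{A \mid\, \vdash t\co A\} \subseteq \mathcal{E}(m', t)$ for every $m'$. The substantive case is $R(m,m')$ with $m,m'$ both equivalence classes of elements (the definition of $R$ guarantees $m' \notin \{\dag, \ddag\}$ here), and I would case on $t$. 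For $t$ in the residual zone $Pol \setminus (Pol_X \cup Y \cup W \cup U \cup \{z\})$ we have $\mathcal{E}(m', t) = Form$, so the inclusion is trivial; for $t \in Y \cup W \cup U$ the value $\mathcal{E}(m,t)$ is a fixed singleton independent of the (non-trivial) moment, so equality holds.

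The two real cases are $t \in Pol_X$ and $t = z$. For $t \in Pol_X$ I would take $A \in \mathcal{E}(m,t)$ with witnesses $t_1\co B_1,\ldots,t_n\co B_n \in end(\xi)$ for all $\xi \in m$ and $\vdash (\bigwedge_i t_i\co B_i) \to t\co A$. The key observation is that $\vdash t_i\co B_i \to K(t_i\co B_i)$ (two applications of \eqref{A5}, exactly as in the proof of Lemma \ref{theorems}.1), so each $K(t_i\co B_i)$ belongs to the end set of the representative of $m$; the definition of $R$ then pushes $K(t_i\co B_i)$ into the relevant end sets of $m'$ (for length $>1$ I would first upgrade to $\Box K(t_i\co B_i)$ by Lemma \ref{theorems}.5 and propagate it across the $\equiv$-class), and factivity of $K$ from \eqref{A7} recovers $t_i\co B_i \in end(\tau)$ for every $\tau \in m'$. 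Since the displayed entailment is a theorem, the very same witnesses establish $A \in \mathcal{E}(m', t)$. The case $t = z$ is entirely parallel, replacing each $t_i\co B_i$ by $Proven(A)$ and using \eqref{A11} (which yields $Proven(A) \to KProven(A)$) in place of the double \eqref{A5}.

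For evidence closure I would again begin with $m \in \{\dag, \ddag\}$, where $\mathcal{E}(m, t) = \{A \mid\, \vdash t\co A\}$ for every $t$ and the three properties follow immediately by modus ponens from \eqref{A4}, \eqref{A6}, and \eqref{A5}, respectively. For a non-trivial $m$ the decisive point is that a compound polynomial is never a proof variable, so each output polynomial $s\times t$, $s+t$, $!t$ lies in $Pol_X$ precisely when its constituents do, and otherwise falls into the residual zone where $\mathcal{E}(m, \cdot) = Form$ and the required inclusion is automatic. It therefore suffices to treat the case where the constituents lie in $Pol_X$: here I would concatenate the witness lists supplied by the hypotheses and chain the governing axiom (\eqref{A4} for application, \eqref{A6} for sum, \eqref{A5} for the proof-checker) onto the conjunction of the witnessing entailments, obtaining a theorem of the required shape. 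Since all witnesses already sit in every $end(\xi)$ with $\xi \in m$, this exhibits the output formula in $\mathcal{E}(m, \cdot)$ for the output polynomial.

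I expect the main obstacle to be the monotonicity argument for $t \in Pol_X$ and $t = z$, namely threading the evidence facts through the relation $R$: its definition splits into a length-$1$ clause and a length-$>1$ clause with different bookkeeping, and in the latter one must additionally transport $K$-formulas across an entire $\equiv$-class via the $\Box$-internalization of Lemma \ref{theorems}.5 before invoking factivity. The closure half, by contrast, is routine once the observation that compound polynomials are never variables collapses every case to either the $Form$ zone or a direct witness-list computation in $Pol_X$.
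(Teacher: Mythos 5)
Your proposal is correct and follows essentially the same route as the paper's proof: special moments via Lemma \ref{proven}, the trivial zones ($Y \cup W \cup U$ and the residual $Form$-zone) by definition, the substantive $Pol_X$ and $z$ cases by internalizing the witnesses under $K$ (via \eqref{A5}, resp.\ \eqref{A11}), pushing them through $R$ and recovering them by factivity \eqref{A7}, and closure by the $Pol_X$-versus-residual dichotomy with witness-list chaining of \eqref{A4}/\eqref{A6}/\eqref{A5}. In fact you are slightly more explicit than the paper on two points it glosses over: spreading $K$-formulas across an $\equiv$-class via Lemma \ref{theorems}.5 in the length-greater-than-one clause of $R$, and the observation that compound polynomials are never proof variables, which justifies the two-case split in the closure argument.
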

\begin{proof}
We start with the monotonicity of evidence. Assume $R(m,m')$ and
$t \in Pol$. If $m \in \{ \dag, \ddag \}$ then, by Lemma
\ref{proven}, $\mathcal{E}(m,t) = \{ A \in Form \mid \vdash t\co A
\} \subseteq \mathcal{E}(m',t)$ for any $m' \in Tree$.

Assume, further, that $m$ is an equivalence class of elements.
Then, since we have $R(m,m')$, $m'$ must be an equivalence class
of elements as well. Also, we are done if $m = m'$. On the other
hand, if $m \neq m'$, then consider $t$. If $t \in Pol \setminus
(Pol_X \cup \{ z \})$, then we must have $\mathcal{E}(m,t) =
\mathcal{E}(m',t)$ by definition, since $m, m' \in Tree \setminus
\{ \dag, \ddag \}$. If $t = z$, then take an arbitrary $A \in
\mathcal{E}(m,z)$. By the above definition of $\mathcal{E}$, this
means that $Proven(A) \in end(\xi)$ for every element $\xi$ of
$m$. By maxiconsistency of $end(\xi)$ in $X$ and \eqref{A11}, this
further means that $KProven(A) \in end(\xi)$ for every element
$\xi$ of $m$. Therefore, by $R(m,m')$, and the fact that $m, m'
\in Tree \setminus \{ \dag, \ddag \}$, we get that $KProven(A) \in
end(\tau)$ for every element $\tau$ of $m'$, whence, by
\eqref{A7}, it follows that $Proven(A) \in end(\tau)$ for every
element $\tau$ of $m'$. Therefore $A \in \mathcal{E}(m',z)$. Since
$A$ was arbitrary, this means that $\mathcal{E}(m,z) \subseteq
\mathcal{E}(m',z)$, as desired.

Finally, assume that $t \in Pol_X$ and take an arbitrary $A \in
\mathcal{E}(m,t)$. Then we can choose $t_1\co B_1,\ldots,t_n\co
B_n$ in such a way that for all $\xi \in m$ we have $t_1\co
B_1,\ldots,t_n\co B_n \in end(\xi)$, and, moreover, $\vdash
(t_1\co B_1\wedge\ldots \wedge t_n\co B_n) \to t\co A$. Since
$t_1\co B_1,\ldots,t_n\co B_n \in end(\xi)$, we know that $\{
t_1\co B_1,\ldots,t_n\co B_n \} \in Form_X$. We also know that,
for every $\xi \in m$, $end(\xi)$ is maxiconsistent in $X$.
Therefore, using Lemma \ref{elementaryconsistency}, we obtain,
successively:
\begin{align*}
&(\forall \xi \in m)(Kt_1\co B_1,\ldots,Kt_n\co B_n \in end(\xi))
&&\textup{(by
Lemma \ref{theorems}.1)}\\
&(\forall \xi \in m)((Kt_1\co B_1\wedge\ldots\wedge Kt_n\co B_n)
\in end(\xi)) &&\textup{(by
Lemma \ref{elementaryconsistency}.5)}\\
&(\forall \xi \in m)(K(t_1\co B_1\wedge\ldots\wedge t_n\co B_n)
\in end(\xi)) &&\textup{(by \eqref{A7})}
\end{align*}
From the latter it follows by $R(m,m')$ that $K(t_1\co
B_1\wedge\ldots\wedge t_n\co B_n) \in end(\tau)$ for all $\tau \in
m'$. We also know that for every $\tau \in m'$, $end(\tau)$ is
maxiconsistent in $X$ so that, applying Lemma
\ref{elementaryconsistency}, and \eqref{A7}, we get that $t_1\co
B_1,\ldots,t_n\co B_n \in end(\tau)$ for all $\tau \in m'$. Adding
this to our initial assumption that $\vdash (t_1\co
B_1\wedge\ldots \wedge t_n\co B_n) \to t\co A$, we obtain that $A
\in \mathcal{E}(m',t)$.

We turn now to the closure conditions. We verify the first two
conditions, and the third one can be verified in a similar way,
restricting attention to $t$ rather than considering both $s$ and
$t$. Let $s, t \in Pol$. We need to consider two cases:

\emph{Case 1}. $m \in \{ \dag, \ddag \}$. If $A \in
\mathcal{E}(m,s)$, then $\vdash s\co A$. Therefore, by \eqref{A6},
we must also have $\vdash (s + t)\co A$ so that $A \in
\mathcal{E}(m,s + t)$. Similarly, if $A \in \mathcal{E}(m,t)$,
then also $A \in \mathcal{E}(m,s + t)$ and the closure constraint
(b) is verified. If, on the other hand, it is true that for some
$A, B \in Form$ we have both $A \to B \in \mathcal{E}(m,s)$ and $A
\in \mathcal{E}(m,t)$, then, again, this means that both $\vdash
s\co A \to B$ and $\vdash t\co A$. By \eqref{A4}, it follows that
$\vdash s\times t\co B$ and, therefore, also $B \in
\mathcal{E}(m,s\times t)$, so that the closure condition (a) is
also verified.

\emph{Case 2}. $m \in Tree \setminus \{ \dag, \ddag \}$. If $s +
t, s\times t \notin Pol_X$, then we have:
$$
\mathcal{E}(m, s + t) = \mathcal{E}(m, s \times t) = Form,
$$
so that all the closure conditions are verified trivially.
Therefore, assume that

\noindent$s + t, s\times t \in Pol_X$. If $A \in Form$ and  $A \in
\mathcal{E}(m,s)$, then  we can choose $t_1\co B_1,\ldots,t_n\co
B_n$ such that for all $\xi \in m$ we have both $t_1\co
B_1,\ldots,t_n\co B_n \in end(\xi)$ and
$$
\vdash (t_1\co B_1\wedge\ldots \wedge t_n\co B_n) \to s\co A.
$$
By \eqref{A0} and \eqref{A6} we get then that $\vdash (t_1\co
B_1\wedge\ldots \wedge t_n\co B_n) \to s + t\co A$, which means
that $A \in \mathcal{E}(m,s + t)$. Similarly, if $A \in
\mathcal{E}(m,t)$, then $A \in \mathcal{E}(m,s + t)$ as well, and
closure condition (b) is verified.

On the other hand, if $A, B \in Form$ and we have both both $A \to
B \in \mathcal{E}(m,s)$ and $A \in \mathcal{E}(m,t)$, then we can
choose $t_1\co B_1,\ldots,t_n\co B_n$ and also $s_1\co
C_1,\ldots,s_k\co C_k$ such that for every $\xi \in m$ we have all
of the following:
\begin{align*}
    &t_1\co B_1,\ldots,t_n\co B_n, s_1\co
C_1,\ldots,s_k\co C_k \in end(\xi);\\
&\vdash (t_1\co B_1\wedge\ldots \wedge t_n\co B_n) \to t\co A;\\
&\vdash (s_1\co C_1\wedge\ldots \wedge s_k\co C_k) \to s\co (A \to
B);
\end{align*}
It follows then by \eqref{A0} and \eqref{A4} that:
\begin{align*}
  &(t_1\co B_1\wedge\ldots \wedge t_n\co B_n \wedge s_1\co C_1\wedge\ldots \wedge s_k\co
  C_k)\to s\times t\co B,
\end{align*}
so that $B \in \mathcal{E}(m,s \times t)$ follows and closure
condition (a) is verified.
\end{proof}

\subsection{$Act$ and $V$}

It only remains to define $Act$ and $V$ for our canonical model,
and we define them as follows:
\begin{itemize}
    \item $(m,h) \in V(p) \Leftrightarrow p \in end(m \cap h)$,
    for all $p \in Var$;
    \item $Act(\dag, (\dag,\ddag)) = Act(\ddag, (\dag,\ddag)) = \emptyset$;
    \item $Act(\dag, h) = \{ z \}$, if $h \neq (\dag,\ddag)$;
    \item $Act(m,h) = \{ z \} \cup \{ y_{(j, A)} \mid Prove(j, A) \wedge \neg\Box Prove(j, A)
    \in \Gamma_1 \} \cup$

     $\qquad\qquad\qquad\qquad\qquad\qquad\cup\{ u_A \mid \Box Prove(j, A) \in \Gamma_1
    \}$, if $m \cap h = (\Gamma_1,\ldots, \Gamma_n)\uparrow$;
     \item $Act(m,h) = \{ z \} \cup \{ y_{(j, A)} \mid Prove(j, A) \wedge \neg\Box Prove(j, A)
    \in \Gamma_1 \} \cup$

    $\qquad\qquad\qquad\qquad\qquad\qquad\cup \{ w_A \mid \Box Prove(j, A) \in \Gamma_1
    \}$, if $m \cap h = (\Gamma_1,\ldots, \Gamma_n)\downarrow$.
\end{itemize}

We begin by establishing some consequences of the above
definition:
\begin{lemma}\label{intersections}
The following statements are true:
\begin{enumerate}
\item If $(\Gamma)\alpha$ is an element, then:
$$
\bigcap_{h \in
H_{[(\Gamma)\alpha]_\equiv}}(Act([(\Gamma)\alpha]_\equiv, h) = \{
z \}.
$$

\item If $n > 1$ and $(\Gamma_1,\ldots, \Gamma_n)\alpha$ is an
element and $g \in H_{[(\Gamma_1,\ldots, \Gamma_n)\alpha]_\equiv}$
is arbitrary, then:
$$
\bigcap_{h \in H_{(\Gamma_1,\ldots,
\Gamma_n)\alpha]_\equiv}}(Act([(\Gamma_1,\ldots,
\Gamma_n)\alpha]_\equiv, h) = Act([(\Gamma_1,\ldots,
\Gamma_n)\alpha]_\equiv, g).
$$
\end{enumerate}
\end{lemma}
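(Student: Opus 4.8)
The plan is to read off both parts from the single observation that $Act(m,h)$ is entirely determined by the first component and the sign of the element $m \cap h$: for $m \cap h = (\Delta_1,\ldots,\Delta_n)\beta$ the value of $Act(m,h)$ depends only on $\Delta_1$ and on $\beta$, and in no way on the later components $\Delta_2,\ldots,\Delta_n$ or on the length.

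Part 2 is then almost immediate. When $n > 1$, the definition of $\equiv$ on elements of length greater than $1$ forces every element of the class $m = [(\Gamma_1,\ldots,\Gamma_n)\alpha]_\equiv$ to agree on its first $n-1$ components and on its sign; in particular every element of $m$ has first component $\Gamma_1$ and sign $\alpha$. Hence $Act(m,h)$ takes one and the same value for all $h \in H_m$, and this common value is precisely $Act(m,g)$ for the fixed but arbitrary $g$. The intersection over $H_m$ therefore equals $Act(m,g)$.

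For Part 1 the inclusion $\{ z \} \subseteq \bigcap_{h} Act(m,h)$ is trivial, since $z$ belongs to every value of $Act$. For the reverse inclusion I would note that each $Act(m,h)$ is a subset of $\{ z \} \cup Y \cup W \cup U$, so it suffices to exclude from the intersection every $u_A$, every $w_A$ and every $y_{(j,A)}$. The variables $u_A$ and $w_A$ are handled by the sign: since for length-$1$ elements $\equiv$ does not constrain the sign (and $\Box A \in \Gamma \Rightarrow A \in \Gamma$ by the $T$-axiom for $\Box$ and maxiconsistency of $\Gamma$), both $(\Gamma)\uparrow$ and $(\Gamma)\downarrow$ lie in $m$. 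By Lemma \ref{hist}.2 there is then both a $\downarrow$-history and an $\uparrow$-history through $m$; the $Act$-value of the former contains no $u$-variable and that of the latter contains no $w$-variable, which excludes every $u_A$ and every $w_A$ from the intersection.

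The remaining step, the exclusion of the $y_{(j,A)}$, is the one I expect to require the real work. It suffices to exhibit an element $(\Delta)\beta \in m$ with $Prove(j,A) \wedge \neg\Box Prove(j,A) \notin \Delta$. If $(\Gamma)\alpha$ already has this property we are done; otherwise $Prove(j,A), \neg\Box Prove(j,A) \in \Gamma$, so $\Box Prove(j,A) \notin \Gamma$ by maxiconsistency. By standard S5 reasoning for $\Box$ the set $\{ B \mid \Box B \in \Gamma \} \cup \{ \neg Prove(j,A) \}$ is then consistent, and it is contained in $Form_X$ because $y_{(j,A)}$ is indexed by an $A \in Form_X$. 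Extending it to a $\Delta$ that is maxiconsistent in $X$ by Lemma \ref{elementaryconsistency}.1 yields $\{ B \mid \Box B \in \Gamma \} \subseteq \Delta$, hence $(\Delta)\beta \in m$, while $\neg Prove(j,A) \in \Delta$ gives $Prove(j,A) \wedge \neg\Box Prove(j,A) \notin \Delta$. Lemma \ref{hist}.2 then supplies a history $h$ with $m \cap h = (\Delta)\beta$, so that $y_{(j,A)} \notin Act(m,h)$. As every non-$z$ candidate is thereby excluded, the intersection is exactly $\{ z \}$.
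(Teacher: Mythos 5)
Your proposal is correct and follows essentially the same route as the paper's own proof: Part 2 rests on the observation that $Act(m,h)$ is determined by the sign and the first component of $m \cap h$, which all elements of a class of length $n>1$ share, and Part 1 excludes the $u$- and $w$-variables by realizing both signs via Lemma \ref{hist}.2 and excludes the $y_{(j,A)}$ via the consistent set $\{ B \mid \Box B \in \Gamma \} \cup \{ \neg Prove(j,A) \}$ extended to a set maxiconsistent in $X$. The only cosmetic difference is that you exhibit the witnessing element directly (with a case split) where the paper phrases the same construction as a \emph{reductio}.
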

\begin{proof}
(Part 1). Set $m := [(\Gamma)\alpha]_\equiv$. It is clear from the
definition of $Act$ that

\noindent$z \in \bigcap_{h \in H_m}(Act(m,h))$, so that we only
need to show that $z$ is the only member in this intersection. The
other elements of $Act$, according to the definition, can have one
of the following forms: either $y_{(j, A)}$, or $u_A$, or $w_A$,
for some $A \in Form_X$ and $j \in Ag$. We know, further, that
both $(\Gamma)\alpha \equiv (\Gamma)\uparrow$ and $(\Gamma)\alpha
\equiv (\Gamma)\downarrow$.\footnote{One of these two elements
even coincides with $(\Gamma)\alpha$, but we cannot tell, which
one.} Then, using Lemma \ref{hist}.2, take any $h, h' \in H_m$ for
which $h \cap m = (\Gamma)\uparrow$ and $h' \cap m =
(\Gamma)\downarrow$. By definition, $Act(m, h)$ is disjoint from
$\{ w_A \mid A \in Form_X \}$ whereas $Act(m, h')$ is disjoint
from $\{ u_A \mid A \in Form_X \}$, therefore, $\bigcap_{h \in
H_m}(Act(m,h))$ must be disjoint from $\{ w_A \mid A \in Form_X \}
\cup \{ u_A \mid A \in Form_X \}$. Finally, consider a variable of
the form $y_{(j, A)}$ for arbitrary $A \in Form_X$ and $j \in Ag$.
If $y_{(j, A)} \in \bigcap_{h \in H_m}(Act(m,h))$, then recall
that for every $(\Delta)\alpha \in m$ there exists, by Lemma
\ref{hist}.2, a history $h_\Delta \in H_m$ such that
$(\Delta)\alpha = m \cap h_\Delta$. This means that $Prove(j, A)
\wedge \neg\Box Prove(j, A) \in \Delta$ for every $(\Delta)\alpha
\in m$, and thus, by maxiconsistency of $\Delta$ in $X$ and Lemma
\ref{elementaryconsistency}.5, that $Prove(j, A), \neg\Box
Prove(j, A) \in \Delta$ for every $(\Delta)\alpha \in m$. In
particular, we will have $Prove(j, A), \neg\Box Prove(j, A) \in
\Gamma$. Consider then the following set of formulas in $X$:
$$
\Xi = \{ B \mid \Box B \in \Gamma \} \cup \{ \neg Prove(j,A) \}.
$$
$\Xi$ is consistent, for otherwise we would have:
$$
\vdash (B_1 \wedge\ldots \wedge B_k) \to Prove(j,A),
$$
for some $B_1,\ldots,B_k$ such that $\Box B_1,\ldots,\Box B_k$ are
all in $\Gamma$. Since $\Box$ is an S5-modality, we would obtain
that
$$
\vdash (\Box B_1 \wedge\ldots \wedge \Box B_k) \to \Box
Prove(j,A),
$$
whence, by maixiconsistency of $\Gamma$ in $X$, it would follow
that $\Box Prove(j,A) \in \Gamma$, and the latter, given that also
$\neg\Box Prove(j, A) \in \Gamma$, would contradict $\Gamma$'s
maxiconsistency. Therefore, $\Xi$ is consistent and we can extend
$\Xi$ to a set $\Theta \subseteq Form_X$, which is maxiconsistent
in $X$. By definition, we will have $(\Gamma)\alpha \equiv
(\Theta)\alpha$, and thus $(\Theta)\alpha \in m$. But we will also
have $\neg Prove(j,A) \in \Theta$ which contradicts our assumption
that $Prove(j, A) \in \Delta$ for every $(\Delta)\alpha \in m$.
This contradiction shows that no proof variable of the form
$y_{(j, A)}$ is in $\bigcap_{h \in H_m}(Act(m,h))$. Therefore,
finally, we get our claim that $\bigcap_{h \in H_m}(Act(m,h)) = \{
z \}$ verified.

(Part 2). We set $m := [(\Gamma_1,\ldots,
\Gamma_n)\alpha]_\equiv$. It will suffice to show that, for all
$h,h' \in H_m$, we have $Act(m,h) = Act(m,h')$. We know that for
some appropriate $\Delta_1,\ldots, \Delta_n, \Theta_1,\ldots,
\Theta_n$ we will have:
$$
m \cap h = (\Delta_1,\ldots, \Delta_n)\alpha,
$$
and:
$$
m \cap h' = (\Theta_1,\ldots, \Theta_n)\alpha.
$$
Since the length of $m$ is greater than $1$, we know that all
elements in $m$ share the same greatest proper initial segment, so
that we have:
$$
\Gamma_i = \Delta_i = \Theta_i
$$
for all $i < n$, and, in particular:
$$
\Gamma_1 = \Delta_1 = \Theta_1.
$$
Now it is clear from the definition of $Act$, that $Act(m,h)$ and
$Act(m,h')$ are completely determined by $\alpha$, $\Delta_1$ and
$\Theta_1$, respectively, therefore, it follows that $Act(m,h) =
Act(m,h')$.
\end{proof}

 We now check that the remaining semantic constraints on normal jstit models:

\begin{lemma}\label{act}
The canonical model, as defined above, satisfies the constraints
as to the expransion of presented proofs, no new proofs
guaranteed, presenting a new proof makes histories divide, and
epistemic transparency of presented proofs.
\end{lemma}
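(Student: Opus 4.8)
The plan rests on a single structural observation about $Act$: for any moment $m$ that is an equivalence class of elements and any $h \in H_m$, the value $Act(m,h)$ is completely determined by the first component $\Gamma_1$ of the element $m \cap h$ together with its arrow $\alpha$, the later components playing no role whatsoever. Hence, whenever two moment-history pairs present the same first component and the same arrow, their $Act$-values coincide. I would combine this with Lemma~\ref{intersections}, which already computes the relevant intersections: $\bigcap_{h \in H_m} Act(m,h) = \{ z \}$ when $m$ has length $1$, and equals the constant value $Act(m,g)$ when $m$ has length greater than $1$. The distinguished moments $\dag$ and $\ddag$ are always treated separately, but their treatment is immediate, since $Act(\dag,(\dag,\ddag)) = Act(\ddag,(\dag,\ddag)) = \emptyset$ forces the relevant intersections to be empty, while $Act(\dag,h) = \{ z \}$ for $h \neq (\dag,\ddag)$ and $z$ lies in every $Act$-value of a class of elements.

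For expansion of presented proofs I would take $m' < m$ and $h \in H_m$, noting first that $h$ passes through $m'$ as well (the predecessors of $m$ form a chain lying on $h$). If $m' = \dag$ the inclusion $\{ z \} \subseteq Act(m,h)$, or $\emptyset \subseteq \emptyset$ in case $m = \ddag$, is immediate. Otherwise both $m'$ and $m$ are classes of elements and $m' \cap h$ is an initial segment of $m \cap h$, so the two share first component and arrow; by the structural observation $Act(m',h) = Act(m,h)$, yielding the desired inclusion as an equality. Epistemic transparency is handled in the same spirit: if $m$ has length $1$ then $\bigcap_{h} Act(m,h) = \{ z \}$, and since $R(m,m')$ forces $m'$ to be a class of elements, $z$ lies in every $Act(m',h')$; if $m$ has length greater than $1$ then, reading off the definition of $R$, every element of $m'$ begins with the same first component and carries the same arrow as the elements of $m$, so by Lemma~\ref{intersections} both intersections equal the one constant $Act$-value determined by that data and are therefore equal. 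The cases $m \in \{ \dag, \ddag \}$ again reduce to $\emptyset \subseteq \cdots$.

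For no new proofs guaranteed I would use Lemma~\ref{intersections} to identify the left-hand side with $\{ z \}$ (length $1$) or with the constant $Act(m,g)$ (greater length), the moments $\dag,\ddag$ being trivial as their intersection is empty. In either case I exhibit a single witness on the right, namely the immediate $<$-predecessor $m'$ of $m$. When $m$ has length $1$ this predecessor is $\dag$, and $Act(\dag,h) = \{ z \}$ for every $h \in H_m$; when $m$ has greater length the predecessor is again a class of elements whose elements share the first component and arrow of the elements of $m$, so $Act(m',h)$ equals the left-hand side. In both cases the left-hand side is literally one of the sets appearing in the union on the right, which gives the inclusion.

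The genuinely delicate constraint is presenting a new proof makes histories divide, and this is where I expect the main obstacle. For $m$ of length greater than $1$ the conclusion $Act(m,h) = Act(m,h')$ holds for \emph{all} $h,h' \in H_m$ outright by Lemma~\ref{intersections}.2, so the hypothesis is not even needed. For $m$ of length $1$ the hypothesis is essential: given $m' > m$ with $m' \in h \cap h'$, both $m \cap h$ and $m \cap h'$ are the length-$1$ initial segments of $m' \cap h$ and $m' \cap h'$, which are $\equiv$-equivalent elements of length at least $2$. The subtle point is that within a length-$1$ equivalence class the arrow need not be constant, so I must argue that passing to the common successor pins it down: since $m'$ has length at least $2$, the relation $\equiv$ forces both the first component and the arrow of $m' \cap h$ and $m' \cap h'$ to agree, whence $m \cap h$ and $m \cap h'$ carry the same first component and arrow, and the structural observation gives $Act(m,h) = Act(m,h')$. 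The moments $\dag$ and $\ddag$ are checked directly: through $\ddag$ there is only one history, and through $\dag$ any two histories sharing a later moment are either both equal to $(\dag,\ddag)$ or both distinct from it, so their $Act$-values coincide.
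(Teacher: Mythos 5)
Your proposal is correct and follows essentially the same route as the paper's own proof: the same case split between $\dag$, $\ddag$ and equivalence classes of elements, the same appeal to Lemma~\ref{intersections}, and the same key structural observation that $Act(m,h)$ is determined solely by the first component and arrow of $m \cap h$. The only differences are cosmetic: the paper witnesses ``no new proofs guaranteed'' with the length-$1$ predecessor $[(\Gamma_1)\alpha]_\equiv$ rather than the immediate predecessor, and it treats ``presenting a new proof makes histories divide'' uniformly in the length of $m$, whereas you separate out the length-$1$ case as the only one where the undividedness hypothesis (and the arrow-pinning argument) is genuinely needed.
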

\begin{proof}
We consider the expansion of presented proofs first. Let $m' < m$
and let $h \in H_m$. Then $m' \neq \ddag$, since $\ddag$ has no
$<$-successors. If $m' = \dag$ and $m = \ddag$, then $h$ must be
$(\dag,\ddag)$ and we have $Act(\dag, (\dag,\ddag)) = Act(\ddag,
(\dag,\ddag)) = \emptyset$, so that the expansion of presented
proofs holds. If $m' = \dag$ and $m$ is an equivalence class of
elements, then $h \neq (\dag,\ddag)$, and we have $Act(\dag, h) =
\{ z \}$ and $z \in Act(m,h)$. Finally, if $m'$ is an equivalence
class of elements, then $m$ is also an equivalence class of
elements. In this case, $m \cap h$ must be of the form
$(\Gamma_1,\ldots, \Gamma_n)\alpha$ for the respective
$\Gamma_1,\ldots, \Gamma_n \subseteq Form_X$ and $\alpha \in \{
\uparrow, \downarrow \}$. But then, for some $k \leq n$, $m' \cap
h$ must be of the form $(\Gamma_1,\ldots, \Gamma_k)\alpha$. Since
the extension of both $Act(m,h)$ and $Act(m',h)$ is determined by
$\alpha$ and $\Gamma_1$, and these are shared by both $m \cap h$
and $m' \cap h$, it follows that $Act(m,h) = Act(m',h)$ and thus
$Act(m,h) \subseteq Act(m',h)$.

We consider next the no new proofs guaranteed constraint. Let $m
\in Tree$. If $m \in \{ \dag,\ddag \}$, then $\bigcap_{h \in
H_m}(Act(m,h)) = \bigcup_{m' < m, h \in H_m}(Act(m',h)) =
\emptyset$ and the constraint is trivially satisfied. If $m \in
Tree \setminus \{ \dag, \ddag \}$, then we need to distinguish
between two cases:

\emph{Case 1}. The length of $m$ equals $1$. Then $m$ is of the
form $[(\Gamma)\alpha]_\equiv$ for the respective $\Gamma
\subseteq Form_X$ and $\alpha \in \{ \uparrow, \downarrow \}$. By
Lemma \ref{intersections}.1, we have then that $\bigcap_{h \in
H_m}(Act(m,h)) = \{ z \}$. On the other hand, note that the only
$<$-predecessor of $[(\Gamma)\alpha]_\equiv = m$ must be $\dag$
and therefore, by definition of $Act$, we get that $\bigcup_{h \in
H_m}(Act(\dag,h)) = \{ z \}$ so that the no new proofs guaranteed
constraint is verified for $m$.

\emph{Case 2}. The length of $m$ is greater than $1$. Then $m$
must be of the form $[(\Gamma_1,\ldots, \Gamma_n, \Gamma_{n +
1})\alpha]_\equiv$ for the respective $\Gamma_1,\ldots, \Gamma_n,
\Gamma_{n + 1} \subseteq Form_X$, $n > 0$, and $\alpha \in \{
\uparrow, \downarrow \}$. Then we choose, by Lemma \ref{hist}.2,
an arbitrary $g \in H_m$ such that $m \cap g = (\Gamma_1,\ldots,
\Gamma_n, \Gamma_{n + 1})\alpha$. For this $g$ we get, using Lemma
\ref{intersections}.2:
\begin{align*}
    &\bigcap_{h \in
H_m}(Act([(\Gamma_1,\ldots, \Gamma_n, \Gamma_{n +
1})\alpha]_\equiv,h)) = Act([(\Gamma_1,\ldots, \Gamma_n, \Gamma_{n
+
1})\alpha]_\equiv,g)\\
&=\{ z \} \cup \{ y_{(j, A)} \mid Prove(j, A) \wedge \neg\Box
Prove(j, A) \in \Gamma_1 \} \cup \{ w_A \mid \Box Prove(j, A) \in
\Gamma_1 \}\\
&\qquad\qquad\qquad\qquad = Act([(\Gamma_1)\alpha]_\equiv,g)\\
&\qquad\qquad\qquad\qquad\subseteq \bigcup_{m' < m, h \in
H_m}(Act(m',h)),
\end{align*}
since $[(\Gamma_1)\alpha]_\equiv < [(\Gamma_1,\ldots, \Gamma_n,
\Gamma_{n + 1})\alpha]_\equiv$.

We turn next to the presenting a new proof makes histories divide
constraint. Consider an $m, m' \in Tree$ such that $m < m'$ and
arbitrary $h, h' \in H_{m'}$. If $m = \ddag$, then the constraint
is verified trivially since $\ddag$ has no $<$-successors. If $m =
\dag$ and $m' = \ddag$, then we must have $h = h' = (\dag, \ddag)$
and the constraint is verified trivially. If $m = \dag$ and $m'
\neq \ddag$, then both $h$ and $h'$ are different from $(\dag,
\ddag)$, which means that $Act(\dag, h) = Act(\dag, h') = \{ z
\}$, and the constraint is again verified. Finally, if $m \in Tree
\setminus \{ \dag, \ddag \}$, then we must have $m =
[(\Gamma_1,\ldots, \Gamma_n)\alpha]_\equiv$ for some appropriate
$\Gamma_1,\ldots, \Gamma_n,\alpha$. But then, since $m' > m$, it
must be that $m' = [(\Gamma_1,\ldots, \Gamma_k)\alpha]_\equiv$ for
some $k > n$ (so that, among other things, we know that $k > 1$).
Now, given that $h, h' \in H_{m'}$, this means that for
appropriate $\Delta, \Delta' \subseteq Form_X$ we must have $h
\cap m' = (\Gamma_1,\ldots, \Gamma_{k - 1}, \Delta)\alpha$ and $h'
\cap m' = (\Gamma_1,\ldots, \Gamma_{k - 1}, \Delta')\alpha$,
which, in turn, means that:
$$
h \cap m = h' \cap m = (\Gamma_1,\ldots, \Gamma_n)\alpha.
$$
It follows, by definition of $Act$, that in this case $Act(m,h) =
Act(m,h')$, and the constraint is verified.

It remains to check the epistemic transparency of presented proofs
constraint. Assume that $m, m' \in Tree$ are such that $R(m,m')$.
If we have $m \in \{ \dag,\ddag \}$, then, by definition, we must
have $\bigcap_{h \in H_m}(Act(m,h)) = \emptyset$, and the
constraint is verified in a trivial way. If, on the other hand, $m
\in Tree \setminus \{ \dag, \ddag \}$, then, by $R(m,m')$, we must
also have $m' \in Tree \setminus \{ \dag, \ddag \}$. We have then
two cases to consider:

\emph{Case 1}. The length of $m$ equals $1$. Then, by Lemma
\ref{intersections}.1, we know that

\noindent$\bigcap_{h \in H_m}(Act(m,h)) = \{ z \}$. It is also
obvious, by the fact that $m' \in Tree \setminus \{ \dag, \ddag
\}$, that $z \in \bigcap_{h \in H_m'}(Act(m',h))$ and thus the
constraint is satisfied.

\emph{Case 2}. The length of $m$ is greater than $1$. Then $m =
[(\Gamma_1,\ldots, \Gamma_n)\alpha]_\equiv$ for appropriate
$\Gamma_1,\ldots, \Gamma_n,\alpha$, and, since we have $R(m,m')$,
we must also have $m' = [(\Gamma_1,\Delta_1\ldots,
\Delta_k)\alpha]_\equiv$ for appropriate $\Delta_1\ldots,
\Delta_k$. We assume that in fact $\alpha = \downarrow$, the other
subcase is similar. Using Lemma \ref{hist}.2, we choose $g \in
H_m$ and $g' \in H_{m'}$ in such a way that $m \cap g =
(\Gamma_1,\ldots, \Gamma_n)\downarrow$ and $m' \cap g' =
(\Gamma_1,\Delta_1\ldots, \Delta_k)\downarrow$. We get then, by
Lemma \ref{intersections}.2:
\begin{align*}
    &\bigcap_{h \in H_m}(Act(m,h)) = Act(m,g)\\
    &=\{ z \} \cup \{ y_{(j, A)} \mid Prove(j, A) \wedge \neg\Box
Prove(j, A) \in \Gamma_1 \} \cup \{ w_A \mid \Box Prove(j, A) \in
\Gamma_1 \}\\
&\qquad\qquad\qquad\qquad\qquad = Act(m',g')\\
&\qquad\qquad\qquad\qquad\qquad = \bigcap_{h' \in
H_{m'}}(Act(m',h')).
\end{align*}
\end{proof}

\subsection{The truth lemma}
It follows from Lemmas \ref{leq}--\ref{act} that our above-defined
canonical model is in fact a normal jstit model. Now we need to
supply a truth lemma:
\begin{lemma}\label{truth}
Let $A \in Form_X$, let $m \in Tree \setminus \{ \dag,\ddag \}$,
and let $h \in H_m$. Then:
$$
\mathcal{M},m,h \models A \Leftrightarrow A \in end(m \cap h).
$$
\end{lemma}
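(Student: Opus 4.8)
The plan is to prove the biconditional by induction on the complexity of $A$, writing throughout $m\cap h = (\Gamma_1,\ldots,\Gamma_n)\alpha$ so that $end(m\cap h) = \Gamma_n$. The propositional base case is immediate from the definition of $V$, and the Boolean cases follow from the induction hypothesis together with the maxiconsistency properties collected in Lemma \ref{elementaryconsistency}. For the stit modalities $[j]$ and $\Box$ the forward directions are read off directly from the defining clauses of $Choice$ and $\equiv$; the backward directions are the usual existence-lemma arguments. For $\neg\Box B\in\Gamma_n$ (resp. $[j]B\notin\Gamma_n$) I would take the set $\{C\mid\Box C\in\Gamma_n\}\cup\{\neg B\}$ (resp. $\{C\mid [j]C\in\Gamma_n\}\cup\{\neg B\}$), show it consistent by the relevant $S5$ reasoning, extend it to a maxiconsistent $\Delta$ by Lemma \ref{elementaryconsistency}.1, and replace the last component of $m\cap h$ by $\Delta$. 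The one thing to verify is that the resulting tuple is still an \emph{element}: here Lemma \ref{theorems}.5, Lemma \ref{theorems}.2 and \eqref{A8} are used, as they force the persistence of $K$-, $Proven$- and $K\Box\neg Prove$-formulas across the $\equiv$-step. By Lemma \ref{hist}.2 the new element lies on some history $h'\in H_m$ (in the same choice cell, in the $[j]$-case, using \eqref{A2}), witnessing failure of the left-hand side.

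For $KB$ the forward direction uses the definition of $R$ together with the $4$ and $T$ axioms from \eqref{A7} and Lemma \ref{theorems}.5: every $R$-successor $m'$ admits a representation whose final component contains every $C$ with $KC\in\Gamma_n$, and the $T$-axiom then yields $B\in end(\tau)$ for all $\tau\in m'$. The backward direction, where $KB\notin\Gamma_n$, is the main obstacle: I must produce an $R$-successor whose end contains $\neg B$. The natural set to extend is $\{C\mid KC\in\Gamma_n\}\cup\{\neg B\}$, enriched (when $m$ has length $1$) with the $Proven$- and $K\Box\neg Prove$-formulas needed to form an element; its consistency is exactly the argument of Lemma \ref{elementcontinuation}, driven by \eqref{A12} and \eqref{R4}, now run with the extra conjunct $\neg B$ and the hypothesis $KB\notin\Gamma_n$. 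Once consistency is secured the extension yields, by Lemma \ref{hist}.2, a moment-history pair refuting $KB$. The case $t\co B$ is then short: using Lemma \ref{theorems}.1 to push $t\co B$ under $\Box$ and the definition of $\mathcal{E}$ on $Pol_X$, membership $t\co B\in\Gamma_n$ is equivalent to $B\in\mathcal{E}(m,t)$, while \eqref{A5} and the already-established $K$-clause supply the $KB$ conjunct of the satisfaction condition.

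The genuinely delicate cases are $Prove(j,B)$ and $Proven(B)$, where the length of $m$ must be split off. Since every element of length $>1$ carries $K\Box\neg Prove(j,B)$ and hence $\neg Prove(j,B)$ in its end, $Prove(j,B)\in\Gamma_n$ forces $n=1$; conversely, when $n>1$ Lemma \ref{intersections}.2 makes $\bigcap_{h'}Act(m,h')$ equal to $Act(m,h)$, so (by moment-determinacy of $t\co B$) the positive and negative conditions become incompatible and $Prove(j,B)$ is false on both sides. For $n=1$ I would use \eqref{A9} in the forward direction: $Prove(j,B)\in\Gamma_1$ propagates $Prove(j,B)$, hence $KB$, into every cell-alternative via $[j]Prove(j,B)$, so the designated variable ($y_{(j,B)}$, $u_B$, or $w_B$, each evaluated by $\mathcal{E}$ to $\{B\}$) witnesses the positive condition, while $\neg Proven(B)$ together with $\bigcap_{h'}Act(m,h')=\{z\}$ (Lemma \ref{intersections}.1) gives the negative condition. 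For the backward direction I would first dispose of the case where the negative condition already fails (then $Prove(j,B)$ is false outright); otherwise the negative condition holds, so by Lemma \ref{intersections}.1 we have $\mathcal{M},m,h\not\models z\co B$, i.e. $B\notin\mathcal{E}(m,z)$ or $KB\notin\Gamma_1$. Invoking \eqref{A13} to obtain $\langle j\rangle\bigwedge_{i}\neg Prove(i,B)\in\Gamma_1$, I build a cell-alternative $h'$ whose end contains $\bigwedge_i\neg Prove(i,B)$ and check that no proof variable in $Act(m,h')$ evaluates to $B$: the $y$- and $u/w$-variables are excluded by $\neg Prove(i,B)$, and $z$ fails by the preceding line, which breaks the positive condition.

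The $Proven(B)$ clause runs along the same Lemma \ref{intersections} dichotomy. When $n=1$ the variable $z$ is the only common presented proof, and $Proven(B)\in\Gamma_1$ is equivalent to $\mathcal{M},m,h\models z\co B$ via Lemma \ref{theorems}.2 (to secure $Proven(B)\in\mathcal{E}(m,z)$ through all of $m$) and \eqref{A11} (for the $KB$ conjunct). When $n>1$, $\bigcap_{h'}Act(m,h')=Act(m,h)$, and any witnessing $y$-, $u$- or $w$-variable carries $Prove(i,B)\in\Gamma_1$, which by the element constraint $Prove(i,B)\in\Gamma_1\Rightarrow Proven(B)\in\Gamma_n$ (and, for $z$, the same argument as in the length-one case) converts back into $Proven(B)\in\Gamma_n$. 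Because the $Proven$-clause is used inside the $Prove$-clause although neither formula is a subformula of the other, I would organize the induction on the subformula $B$, establishing the $K$-, $t\co$-, $Proven$- and $Prove$-versions for $B$ in that order within a single inductive step.
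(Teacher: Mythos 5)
Your overall strategy is the paper's own: the same induction, the same length-of-$m$ dichotomies, the same witness constructions for $\Box$ and $[j]$ (the paper keeps the prefixes, $\{\Box C\mid \Box C\in\Gamma_n\}$, but with the S5 axioms your stripped sets are interchangeable with these), and the same use of Lemmas \ref{elementaryconsistency}, \ref{hist}.2, \ref{intersections} and \ref{theorems}. Your reduction of the negative condition for $Prove(j,B)$ to the single statement ``$z\co B$ fails'' via Lemma \ref{intersections} is, if anything, a cleaner packaging than the paper's variable-by-variable check, and your closing remark about ordering the cases $K$, $t\co$, $Proven$, $Prove$ for a fixed operand $B$ makes explicit exactly how the paper's proof is implicitly organized.

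The genuine gap is in the backward direction of the $K$-case. First, the ``enrichment'' is attached to the wrong case: by Definition \ref{element}, elements of length $1$ carry \emph{no} closure constraints, so when $m$ has length $1$ the set $\{C\mid KC\in\Gamma_1\}\cup\{\neg B\}$, extended to a maxiconsistent $\Delta$, already gives an element $(\Delta)\alpha$, and $[(\Delta)\alpha]_\equiv$ is an $R$-successor by Lemma \ref{theorems}.5; it is the length ${>}1$ case that must respect the element conditions, because there the $R$-successor has to have the shape $[(\Gamma_1,\Delta)\alpha]_\equiv$. Second, enriching in the length-$1$ case is not just unnecessary but can be outright contradictory: if $Prove(j,A)\in\Gamma_1$, then \eqref{A9} puts $\neg Proven(A)$ into $\Gamma_1$, hence by the $T$-axiom of \eqref{A7} we get $KProven(A)\notin\Gamma_1$; taking $B:=Proven(A)$ satisfies your hypothesis $KB\notin\Gamma_1$, yet your enriched set would contain both $Proven(A)$ and $\neg B=\neg Proven(A)$. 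Third, the consistency argument you appeal to cannot be ``run with the extra conjunct $\neg B$'': inconsistency of the enriched set yields a theorem of the form $KA'\to(\neg Proven(C_1)\vee\ldots\vee\neg Proven(C_r)\vee B)$, and neither \eqref{R4} (whose consequent must be a disjunction of negated $Proven$-formulas only) nor \eqref{A12} can absorb the stray disjunct, so the argument of Lemma \ref{elementcontinuation} simply does not transfer. The repair is the move you yourself make correctly in the $\Box$ and $[j]$ cases, transposed from $\Box$-persistence to $K$-persistence: when the length of $m$ exceeds $1$, the end $\Gamma_n$ of $m\cap h$ \emph{already} contains $Proven(A)$ for each $Prove(j,A)\in\Gamma_1$ and all the formulas $K\Box\neg Prove(j,A)$ (Definition \ref{element}), and these are $K$-prefixed inside $\Gamma_n$ by \eqref{A11} and the $4$-axiom of \eqref{A7}; hence they lie in $\{C\mid KC\in\Gamma_n\}\subseteq\Delta$ automatically, $(\Gamma_1,\Delta)\alpha$ is an element, and the consistency of $\{C\mid KC\in\Gamma_n\}\cup\{\neg B\}$ needs nothing beyond plain S4 reasoning for $K$. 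This is exactly how the paper's Case 3.2 proceeds.
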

\begin{proof}
As is usual, we prove the lemma by induction on the construction
of $A$. The basis of induction with $A = p \in Var$ we have by
definition of $V$, whereas Boolean cases for the induction step
are trivial. We treat the modal cases:

\emph{Case 1}. $A = \Box B$. If $\Box B \in end(m \cap h)$, then
note that for every $h' \in H_m$ we must have $m \cap h' \in m$ so
that $m \cap h' \equiv m \cap h$. By definition of $\equiv$ and
the fact that $m \in Tree \setminus \{ \dag,\ddag \}$, we must
have then $B \in (m \cap h')$ for all $h' \in H_m$ and thus, by
induction hypothesis, we obtain that $\mathcal{M},m,h \models \Box
B$. If, on the other hand, $\Box B \notin end(m \cap h)$, we need
to consider then two subcases:

\emph{Case 1.1}. The length of $m$ equals $1$. We must have then
$m \cap h = (\Gamma)\alpha$ for some appropriate $\Gamma$ and
$\alpha$ so that $\Gamma = end(m \cap h)$. Then the set
$$
\Xi = \{ \Box C \mid \Box C \in \Gamma \} \cup \{ \neg B \}
$$
must be consistent, since otherwise we would have
$$
\vdash (\Box C_1\wedge\ldots\wedge\Box C_n) \to B
$$
for some $\Box C_1,\ldots,\Box C_n \in \Gamma$, whence, since
$\Box$ is an S5-modality, we would get
$$
\vdash (\Box C_1\wedge\ldots\wedge\Box C_n) \to \Box B,
$$
which would mean that $\Box B \in \Gamma$, contrary to our
assumption. Therefore, $\Xi$ is consistent and we can extend $\Xi$
to a set $\Delta \in Form_X$ which is maxiconsistent in $X$. Of
course, in this case $B \notin \Delta$. We will have then that
$(\Delta)\alpha$ is an element, and, by definition of $\equiv$,
that $(\Gamma)\alpha \equiv (\Delta)\alpha$. By Lemma
\ref{hist}.2, for some $h' \in H_m$ we will have $(\Delta)\alpha =
m \cap h'$ and, therefore, $\Delta = end(m \cap h')$. Since $B
\notin \Delta$, it follows, by induction hypothesis, that
$\mathcal{M},m,h' \not\models B$, hence $\mathcal{M},m,h
\not\models \Box B$ as desired.

\emph{Case 1.2}. The length of $m$ is greater than $1$. We must
have then $m \cap h = (\Gamma_1,\ldots,\Gamma_n,\Gamma)\alpha$ for
some appropriate $n > 0$, $\Gamma_1,\ldots,\Gamma_n,\Gamma$ and
$\alpha$ so that $\Gamma = end(m \cap h)$. We then define $\Delta$
as in Case 1.1 so that we have
$(\Gamma_1,\ldots,\Gamma_n,\Gamma)\alpha \equiv
(\Gamma_1,\ldots,\Gamma_n,\Delta)\alpha$ and show that for any $h'
\in H_m$ such that $(\Gamma_1,\ldots,\Gamma_n,\Delta)\alpha = m
\cap h'$ and, \emph{eo ipso}, $\Delta = end(m \cap h')$, we will
have $\mathcal{M},m,h' \not\models B$, whence $\mathcal{M},m,h
\not\models \Box B$ as desired. The only new ingredient is that
now we need to supply a proof that
$(\Gamma_1,\ldots,\Gamma_n,\Delta)\alpha$ is actually an element.
Well, if for any $C \in Form_X$ we have that $KC \in \Gamma_n$,
then, since $(\Gamma_1,\ldots,\Gamma_n,\Gamma)\alpha$ is an
element, we will have $KC \in \Gamma$, whence, by maxiconsistency
of $\Gamma$ in $X$ and Lemma \ref{theorems}.5, $\Box KC \in
\Gamma$, and since every boxed formula from $\Gamma$ is also in
$\Delta$, we get that $\Box KC \in \Delta$, whence $KC \in \Delta$
by maxiconsistency of $\Delta$ in $X$ and S5 reasoning for $\Box$.
Further, if we have $Prove(j, C) \in \Gamma_1$ for $j \in Ag$,
then, since $(\Gamma_1,\ldots,\Gamma_n,\Gamma)\alpha$ is an
element, we will have $Proven(C) \in \Gamma$, whence, by
maxiconsistency of $\Gamma$ in $X$ and Lemma \ref{theorems}.2,
$\Box Proven(C) \in \Gamma$, and since every boxed formula from
$\Gamma$ is also in $\Delta$, we get that $\Box Proven(C) \in
\Delta$, whence $\Box Proven(C) \in \Delta$ by maxiconsistency of
$\Delta$ in $X$ and S5 reasoning for $\Box$. Finally, if $C \in
Form_X$ and $j \in Ag$, then $K\Box\neg Prove(j, C) \in \Gamma$,
because $(\Gamma_1,\ldots,\Gamma_n,\Gamma)\alpha$ is an element,
whence $\Box K\Box\neg Prove(j, C) \in \Gamma$ by \eqref{A8} and
maxiconsistency of $\Gamma$ in $X$. And since every boxed formula
from $\Gamma$ is also in $\Delta$, we get that $\Box K\Box\neg
Prove(j, C) \in \Delta$ as well, hence $K\Box\neg Prove(j, C) \in
\Delta$ by \eqref{A1} and maxiconsistency of $\Delta$ in $X$.

\emph{Case 2}. $A = [j]B$ for some $j \in Ag$. Then, if $[j]B \in
end(m \cap h)$, by definition of $Choice$ and the fact that $m \in
Tree \setminus \{ \dag,\ddag \}$ we must have:
$$
Choice^m_j(h) = \{ h' \mid h' \in H_m,\,
    (\forall C \in Form_X)([j]C \in end(h \cap m) \Rightarrow C \in end(h' \cap
    m))\}.
$$
Therefore, if $h' \in Choice^m_j(h)$, then we must have $B \in
end(h' \cap m)$, and further, by induction hypothesis, that
$\mathcal{M},m,h' \models B$, so that we get $\mathcal{M},m,h
\models [j]B$. On the other hand, if $[j]B \notin end(m \cap h)$,
we need to consider then two subcases:

\emph{Case 2.1}. The length of $m$ equals $1$. We must have then
$m \cap h = (\Gamma)\alpha$ for some appropriate $\Gamma$ and
$\alpha$ so that $\Gamma = end(m \cap h)$. Then the set
$$
\Xi = \{ [j]C \mid [j]C \in \Gamma \} \cup \{ \neg B \}
$$
must be consistent, since otherwise we would have
$$
\vdash ([j]C_1\wedge\ldots\wedge[j]C_n) \to B
$$
for some $[j]C_1,\ldots,[j]C_n \in \Gamma$, whence, since $[j]$ is
an S5-modality, we would get
$$
\vdash ([j]C_1\wedge\ldots\wedge[j]C_n) \to [j]B,
$$
which would mean that $[j]B \in \Gamma$, contrary to our
assumption. Therefore, $\Xi$ is consistent and we can extend $\Xi$
to a set $\Delta \subseteq Form_X$ which is maxiconsistent in $X$.
Of course, in this case $B \notin \Delta$. We will have then that
$(\Delta)\alpha$ is an element.

Now, if $D \in Form_X$ is such that $\Box D \in \Gamma$, then, by
\eqref{A2} and maxiconsistency of $\Gamma$ in $X$, we know that
$[j]D \in \Gamma$, so that also $[j]D \in \Delta$, and hence, by
\eqref{A1} and maxiconsistency of $\Delta$ in $X$, $D \in \Delta$.
We have thus shown that:
 \begin{equation}\label{E:t1}
(\forall D \in Form_X)(\Box D \in \Gamma \Rightarrow D \in
\Delta),
\end{equation}
 and it follows that $(\Gamma)\alpha
\equiv (\Delta)\alpha$ by definition of $\equiv$. By Lemma
\ref{hist}.2, for some $h' \in H_m$ we will have $(\Delta)\alpha =
m \cap h'$ and, therefore, $\Delta = end(m \cap h')$. Also, since
$\Delta$ contains all the $[j]$-modalized formulas from $\Gamma$,
we know that for any such $h'$ we will have $h' \in
Choice^m_j(h)$. Since $B \notin \Delta$, it follows, by induction
hypothesis, that $\mathcal{M},m,h' \not\models B$, hence
$\mathcal{M},m,h \not\models [j]B$ as desired.

\emph{Case 2.2}. The length of $m$ is greater than $1$. We must
have then $m \cap h = (\Gamma_1,\ldots,\Gamma_n,\Gamma)\alpha$ for
some appropriate $n > 0$, $\Gamma_1,\ldots,\Gamma_n,\Gamma$ and
$\alpha$ so that $\Gamma = end(m \cap h)$. We then define $\Delta$
as in Case 2.1 so that we have
$(\Gamma_1,\ldots,\Gamma_n,\Gamma)\alpha \equiv
(\Gamma_1,\ldots,\Gamma_n,\Delta)\alpha$ and show that for any $h'
\in H_m$ such that $(\Gamma_1,\ldots,\Gamma_n,\Delta)\alpha = m
\cap h'$ and, \emph{eo ipso}, $\Delta = end(m \cap h')$, we will
have both $h' \in Choice^m_j(h)$ and $\mathcal{M},m,h' \not\models
B$, whence $\mathcal{M},m,h \not\models [j]B$ as desired. Again, a
separate argument for $(\Gamma_1,\ldots,\Gamma_n,\Delta)\alpha$
being an element needs to be supplied, and it can be done in the
same way as in Case 1.2, given that by \eqref{E:t1} and S5
properties of $\Box$ we know that every boxed formula from
$\Gamma$ is also in $\Delta$.

\emph{Case 3}. $A = KB$. Assume that $KB \in end(m \cap h)$. We
clearly have then $m = [(m \cap h)]_\equiv$. Hence, by definition
of $R$ and the fact that $m \in Tree \setminus \{ \dag,\ddag \}$
we must have for every $m' \in Tree$:
$$
R(m,m') \Rightarrow (\forall \tau \in m')(\forall C \in Form_X)(KC
\in end(m \cap h) \Rightarrow KC \in end(\tau)).
$$
Therefore, if $R(m,m')$ and $h' \in H_{m'}$ is arbitrary, then, of
course, $(h' \cap m') \in m'$ so that $KB \in end(h' \cap m')$,
and, further, $B \in end(h' \cap m')$ by S4 reasoning for $K$.
Therefore, by induction hypothesis, we get that $\mathcal{M},m',h'
\models B$, whence $\mathcal{M},m,h \models KB$. On the other
hand, if $KB \notin end(m \cap h)$, we need to consider then two
subcases:

\emph{Case 3.1}. The length of $m$ equals $1$. We must have then
$m \cap h = (\Gamma)\alpha$ for some appropriate $\Gamma$ and
$\alpha$ so that $\Gamma = end(m \cap h)$. Then the set
$$
\Xi = \{ KC \mid KC \in \Gamma \} \cup \{ \neg\Box B \}
$$
must be consistent, since otherwise we would have
$$
\vdash (KC_1\wedge\ldots\wedge KC_n) \to \Box B
$$
for some $KC_1,\ldots,KC_n \in \Gamma$, whence, since $K$ is an
S4-modality, we would get
$$
\vdash (KC_1\wedge\ldots\wedge KC_n) \to K\Box B,
$$
which would mean that $K\Box B \in \Gamma$, hence, by \eqref{A1},
\eqref{A7} and maxiconsistency of $\Gamma$ in $X$, that $KB \in
\Gamma$, contrary to our assumption. Therefore, $\Xi$ is
consistent and we can extend $\Xi$ to a set $\Delta \subseteq
Form_X$ which is maxiconsistent in $X$. Of course, in this case
$\Box B \notin \Delta$. We will have then that $(\Delta)\alpha$ is
an element. So we set $m' = [(\Delta)\alpha]_\equiv$. Assume that
$(\Delta')\alpha' \equiv (\Delta)\alpha$. Then every boxed formula
from $\Delta$ will be in $\Delta'$. In particular, whenever $KC
\in \Delta$, then also $\Box KC \in \Delta$ and thus $KC \in
\Delta'$, by Lemma \ref{theorems}.5 and maxiconsistency of
$\Delta$ in $X$. Therefore, whenever $KC \in \Gamma$ and $\tau \in
m' = [(\Delta)\alpha]_\equiv$, we have that $KC \in end(\tau)$ so
that we must have $R(m,m')$. On the other hand, since $\Box B
\notin \Delta$, then, by Case 1, there must be a $\tau \in m'$
such that $B \notin end(\tau)$. But then, by Lemma \ref{hist}.2,
we can choose $h' \in H_{m'}$ in such a way that $\tau = m' \cap
h'$, and we get that $B \notin end(m' \cap h')$. Therefore, by
induction hypothesis, we get $\mathcal{M},m',h' \not\models B$. In
view of the fact that also $R(m,m')$, this means that
$\mathcal{M},m,h \not\models KB$ as desired.

\emph{Case 3.2}. The length of $m$ is greater than $1$. We must
have then $m \cap h = (\Gamma_1,\ldots,\Gamma_n,\Gamma)\alpha$ for
some appropriate $n > 0$, $\Gamma_1,\ldots,\Gamma_n,\Gamma$ and
$\alpha$ so that $\Gamma = end(m \cap h)$. We then define $\Delta$
as in Case 3.1 and consider $m' =
[(\Gamma_1,\Delta)\alpha]_\equiv$. We get then $R(m,m')$
immediately by definition of $R$. Just as in Case 3.1, we will use
the fact that $\Box B \notin \Delta$ to find $\tau \in m'$ and $h'
\in H_{m'}$ so that $\tau = m' \cap h'$ and $B \notin end(\tau)$.
It will follow by induction hypothesis that $\mathcal{M},m',h'
\not\models B$, hence, given that $R(m,m')$, that $\mathcal{M},m,h
\not\models KB$.

The only new ingredient is that now we need to supply a proof that
$(\Gamma_1,\Delta)\alpha$ is actually an element. Well, if for any
$C \in Form_X$ we have that $KC \in \Gamma_1$, then, since
$(\Gamma_1,\ldots,\Gamma_n,\Gamma)\alpha$ is an element, we will
have $KC \in \Gamma$, whence $KC \in \Delta$. Further, if we have
$Prove(j, C) \in \Gamma_1$ for $j \in Ag$, then, since
$(\Gamma_1,\ldots,\Gamma_n,\Gamma)\alpha$ is an element, we will
have $Proven(C) \in \Gamma$, whence, by maxiconsistency of
$\Gamma$ in $X$ and \eqref{A11}, $KProven(C) \in \Gamma$, and
since every $K$-modalized formula from $\Gamma$ is also in
$\Delta$, we get that $KProven(C) \in \Delta$, whence $Proven(C)
\in \Delta$ by maxiconsistency of $\Delta$ in $X$ and S4 reasoning
for $K$. Finally, if $C \in Form_X$ and $j \in Ag$, then
$K\Box\neg Prove(j, C) \in \Gamma$, because
$(\Gamma_1,\ldots,\Gamma_n,\Gamma)\alpha$ is an element. And since
every $K$-modalized formula from $\Gamma$ is also in $\Delta$, we
get that $K\Box\neg Prove(j, C) \in \Delta$ as well.

\emph{Case 4}. $A = t\co B$ for some $t \in Pol_X$. Note that by
\eqref{A0} we know that $\vdash t\co B \to t\co B$. Therefore, if
$t\co B \in end(m \cap h)$, we will have $A \in \mathcal{E}(m,t)$
by definition. Also, by \eqref{A5} and maxiconsistency of $end(m
\cap h)$ in $X$, we will have $KB \in end(m \cap h)$. Therefore,
by Case 3, we will have that $\mathcal{M},m,h \models KB$ and
further, by $A \in \mathcal{E}(m,t)$, that $\mathcal{M},m,h
\models t\co B$. On the other hand, if $t\co B \notin end(m \cap
h)$, then for no

\noindent$t_1\co B_1,\ldots, t_n\co B_n \in end(m \cap h)$ can it
be that:
$$
\vdash (t_1\co B_1 \wedge\ldots\wedge t_n\co B_n) \to t\co B,
$$
for in this case we would also have $t\co B \in end(m \cap h)$ by
maxiconsistency of $end(m \cap h)$ in $X$. Therefore, we must have
$A \notin \mathcal{E}(m,t)$ so that $\mathcal{M},m,h \not\models
t\co B$.

\emph{Case 5}. $A = Proven(B)$. Assume that $Proven(B) \in end(m
\cap h)$. Then, by Lemma \ref{theorems}.1 and maxiconsistency of
$end(m \cap h)$, we will also have $\Box Proven(B) \in end(m \cap
h)$. Now, choose an arbitrary $\xi \in m$. We know that $end(\xi)
\equiv end(m \cap h)$, therefore, we must have $Proven(B) \in
end(\xi)$ by definition of $\equiv$, which means that $B \in
\mathcal{E}(m,z)$. We also have $z \in Act(m,h')$ for all $h' \in
H_m$ and we will have $KB \in end(m \cap h)$ by \eqref{A11} so
that we have $\mathcal{M},m,h \models z\co B$ by Case 3 and
induction hypothesis.\footnote{Note that sentences like $z\co B$
are not covered by our induction since $z \notin Pol_X$; but
sentences like $KB$ are covered since $B \in Form_X$. This is the
reason why our argument invokes Case 3 rather than Case 4.} It
follows then that $\mathcal{M},m,h \models Proven(B)$. On the
other hand, assume that $Proven(B) \notin end(m \cap h)$. Then we
have to consider two subcases:

\emph{Case 5.1}. The length of $m$ equals 1. Then, since
$Proven(B) \notin end(m \cap h)$, we will have $B \notin
\mathcal{E}(m,z)$ by definition. Also, by Lemma
\ref{intersections}.1, we know that

\noindent$\bigcap_{h' \in H_m}Act(m,h') = \{ z \}$. It follows
that $\mathcal{M},m,h \not\models Proven(B)$.

\emph{Case 5.2}. The length of $m$ is greater than 1.  We must
have then $m \cap h = (\Gamma_1,\ldots,\Gamma_n,\Gamma)\alpha$ for
some appropriate $n > 0$, $\Gamma_1,\ldots,\Gamma_n,\Gamma$ and
$\alpha$ so that $\Gamma = end(m \cap h)$. We will assume that in
fact $\alpha = \uparrow$, the reasoning for the case $\alpha =
\downarrow$ is similar. It follows then, by Lemma
\ref{intersections}.2:

\begin{align*}
\bigcap_{h' \in H_m}Act(m,h') = \{ z \} \cup \{ y_{(j, C)} \mid
Prove(j, C) \wedge \neg\Box &Prove(j, C)
    \in \Gamma_1 \} \cup\\
    &\cup \{ u_A \mid \Box Prove(j, C) \in \Gamma_1
    \}.
\end{align*}

We know that $B \notin \mathcal{E}(m,z)$ since $Proven(B) \notin
\Gamma$. If, for some $j \in Ag$, we would have $Prove(j, B) \in
\Gamma_1$, it would follow that $Proven(B) \in \Gamma$, since
$(\Gamma_1,\ldots,\Gamma_n,\Gamma)\alpha$ is an element.
Therefore, if $u_C, y_{(j,C)} \in \bigcap_{h' \in H_m}Act(m,h')$
for any $j \in Ag$, then $C \neq B$ and therefore $B \notin
\mathcal{E}(m, u_C) = \mathcal{E}(m, y_{(j,C)}) = \{ C \}$. It
follows then that for no proof which is presented under all
histories through $m$, this proof will be acceptable for $B$,
hence we get $\mathcal{M},m,h \not\models Proven(B)$.

\emph{Case 6}. $A = Prove(j,B)$ for some $j \in Ag$. Assume that
$Prove(j,B) \in end(m \cap h)$. Then we know that the length of
$m$ must be $1$. Indeed, if length of $m$ were greater than $1$,
then we would have $K\Box\neg Prove(j,B) \in end(m \cap h)$,
whence, by S4 reasoning for $K$, S5 reasoning for $\Box$, and
maxiconsistency of $end(m \cap h)$ in $X$ we would have $\neg
Prove(j,B) \in end(m \cap h)$, so that $Prove(j,B) \in end(m \cap
h)$ would be impossible.

So, for some appropriate $\Gamma$ and $\alpha$ we will have both
$m = [(\Gamma)\alpha]_\equiv$ and $m \cap h = (\Gamma)\alpha$. We
need to consider two subcases:

\emph{Case 6.1}. $\Box Prove(j,B) \in \Gamma$. Then, for all $h'
\in Choice^m_j(h)$, we will have, of course, $m \cap h' \equiv m
\cap h$ which means, by maxiconsistency and S5 reasoning for
$\Box$, that we will also have $\Box Prove(j, B) \in end(m \cap
h')$. This will mean that for all $h' \in Choice^m_j(h)$ we will
have either $u_B$ or $w_B$ in $Act(m,h')$ and we will have, of
course $B \in \mathcal{E}(m, u_B) = \mathcal{E}(m, w_B)$. Further,
by \eqref{A9} and maxiconsistency in $X$ of every $end(m \cap h')$
with $h' \in Choice^m_j(h)$ we know that also $KB \in m \cap h'$
for every such $h'$. Therefore, we know by Case 3 above that
either $\mathcal{M},m,h' \models u_B\co B$, or
$\mathcal{M},m,h'\models w_B\co B$ for every $h' \in
Choice^m_j(h)$.

Assume, further, that for some $s \in Pol$ we have
$\mathcal{M},m,h \models s\co B$. Then, in particular, we must
have $B \in \mathcal{E}(m,s)$. By definition of $\mathcal{E}$, $s$
cannot then be a proof variable of the form $u_C$, $w_C$, or
$y_{(j,C)}$ for any $j \in Ag$ and any formula $C$ different from
$B$. Moreover, $s$ cannot be $z$, since we have $Prove(j,B) \in
end(m \cap h)$ whence by maxiconsistency of  $end(m \cap h)$ in
$X$ and \eqref{A9}, $\neg Proven(B) \in end(m \cap h)$, so that,
again by maxiconsistency, $Proven(B) \notin end(m \cap h) \in m$,
which means, by the above definition of $\mathcal{E}$, that $B
\notin \mathcal{E}(m,z)$. Therefore, assuming that
$\mathcal{M},m,h \models s\co B$, $s$ can be either in $Pol_X$ or
in $Pol \setminus (Pol_X \cup Y \cup W \cup U \cup \{ z \})$, or
else in $\{ w_B, u_B, y_{(j,B)} \mid j \in Ag \}$. Well, if $s$ is
either in $Pol_X$ or in $Pol \setminus (Pol_X \cup Y \cup W \cup U
\cup \{ z \})$, then it is immediate from the definition of $Act$
that $s \notin Act(m,h)$. On the other hand, if $s \in  \{
y_{(j,B)} \mid j \in Ag \}$, then note that by maxiconsistency of
$end(m \cap h)$ in $X$ we must have $Prove(j,B) \wedge \neg\Box
Prove(j, B) \notin \Gamma$ whence it immediately follows that,
again $s \notin Act(m,h)$. Finally, consider two elements
$(\Gamma)\uparrow$ and $(\Gamma)\downarrow$. One of these elements
is actually
 $(\Gamma)\alpha$, both elements are in $m$, and, by Lemma \ref{hist}.2, we can choose $h', h'' \in H_m$ in such a way that we have both
 $ (\Gamma)\uparrow = m \cap h'$ and $ (\Gamma)\downarrow = m \cap h''$ . It clearly follows then from the definition of $Act$ that
 $w_B \notin Act(m, h')$, whereas $u_B \notin Act(m, h'')$.

 \emph{Case 6.2}. $\Box Prove(j,B) \notin \Gamma$. Then, by maxiconsistency of $\Gamma = end(m \cap h)$ in $X$,
 we must have $Prove(j,B) \wedge \neg\Box Prove(j,B) \in \Gamma$ as well as (again, by maxiconsistency of $\Gamma$  in $X$ and Lemma \ref{theorems}.4)  $[j](Prove(j,B) \wedge \neg\Box Prove(j,B)) \in \Gamma$.
 Therefore, for every $h' \in Choice^m_j(h)$  we will have $Prove(j,B) \wedge \neg\Box Prove(j,B) \in end(m \cap h')$ simply by definition of  $Choice$.
 This further means that for every such $h'$, the proof variable $y_{(j,B)}$ will be in $Act(m,h')$. Besides, it is immediate
 from the definition of $\mathcal{E}$ that $B \in \mathcal{E}(m, y_{(j,B)})$. Finally, note that by \eqref{A9} and maxiconsistency of the
 respective $end(m \cap h')$  in $X$, we will have $KB \in end(m \cap h')$ for every $h' \in Choice^m_j(h)$. Therefore, by Case 3 above, we will have
 $\mathcal{M},m,h'\models y_{(j,B)}\co B$ for every $h' \in Choice^m_j(h)$.

 Assume, further, that for some $s \in Pol$ we have $\mathcal{M},m,h \models s\co B$. Just as
 in Case 6.1, we can show that $s$ cannot be of the form $z$, $u_C$, $w_C$, or $y_{(j,C)}$ for any $j \in Ag$ and any formula $C$ different from $B$.
 Then, again borrowing our reasoning from the Case 6.1 above, we can show that
 if $s \in Pol_X$ or $s \in Pol \setminus (Pol_X \cup Y \cup W \cup U \cup \{ z \})$, then we must have
 $s \notin Act(m,h)$. If $s$ is $u_B$ or $w_B$ then we must have $s \notin Act(m,h)$ since $\Box Prove(j,B) \notin \Gamma = end(m \cap h)$,
 and therefore, by maxiconsistency of $\Gamma$ in $X$ and \eqref{A10} we must have $\Box Prove(i,B) \notin \Gamma = end(m \cap h)$ for all $i \in Ag$.
 Assume then that $s$ is $y_{(i,B)}$ for some $i \in Ag$. If $y_{(i,B)} \notin Act(m,h)$, then we are done. If, on the other hand,
 $y_{(i,B)} \in Act(m,h)$, then, by definition of $Act$, we must have $Prove(i,B) \wedge \neg\Box Prove(i,B) \in \Gamma = end(m \cap h)$, hence, by
 Lemma \ref{elementaryconsistency}.5, $\neg\Box Prove(i,B) \in end(m \cap h)$. Then the set
$$
\Xi = \{ \Box C \mid \Box C \in \Gamma \} \cup \{ \neg Prove(i,B) \}
$$
must be consistent, since otherwise we would have
$$
\vdash (\Box C_1\wedge\ldots\wedge\Box C_n) \to Prove(i,B)
$$
for some $\Box C_1,\ldots,\Box C_n \in \Gamma$, whence, since
$\Box$ is an S5-modality, we would get
$$
\vdash (\Box C_1\wedge\ldots\wedge\Box C_n) \to \Box Prove(i,B),
$$
which would mean that $\Box Prove(i,B) \in \Gamma$, contrary to
our assumption. Therefore, $\Xi$ is consistent and we can extend
$\Xi$ to a set $\Delta \subseteq Form_X$ which is maxiconsistent
in $X$. Of course, in this case $Prove(i,B) \notin \Delta$. We
will have then that $(\Delta)\alpha$ is an element, and, by
definition of $\equiv$, that $(\Gamma)\alpha \equiv
(\Delta)\alpha$. By Lemma \ref{hist}.2, for some $h' \in H_m$ we
will have $(\Delta)\alpha = m \cap h'$ and, therefore, $\Delta =
end(m \cap h')$. Since $Prove(i,B) \notin \Delta$, it follows that
$y_{(i,B)} \notin Act(m,h')$.

Thus we have shown that if $Prove(j,B) \in end(m \cap h)$, then
$\mathcal{M},m,h\models Prove(j,B)$. For the inverse direction,
assume that $Prove(j,B) \notin end(m \cap h)$. Again, we have to
consider two further subcases:

\emph{Case 6.3}. The length of $m$ equals $1$ so that, for some
appropriate $\Gamma$ and $\alpha$ we have both $m =
[(\Gamma)\alpha]_\equiv$ and $m \cap h = (\Gamma)\alpha$. If
$\mathcal{M},m,h \models Proven(B)$, then by \eqref{A9} we will
have $\mathcal{M},m,h \not\models Prove(j,B)$, and thus we will be
done. Therefore, assume that $\mathcal{M},m,h \not\models
Proven(B)$. Moreover, if $\mathcal{M},m,h \not\models KB$ then we
will again have, by \eqref{A9}, that $\mathcal{M},m,h \not\models
Prove(j,B)$, so that we may also safely assume that
$\mathcal{M},m,h \models KB$. Under these assumptions, in order to
show that $\mathcal{M},m,h \not\models Prove(j,B)$ we have to show
that the positive condition fails in that there is an $h' \in
Choice^m_j(h)$  such that no acceptable proof of $B$ is present in
$Act(m,h')$. To this end, we consider the set
$$
    \Xi = \{ [j]C \mid [j]C \in \Gamma \} \cup \{ \bigwedge_{i \in Ag}\neg Prove(i,B) \}.
$$
This set must be consistent, since otherwise we would have
$$
\vdash ([j]C_1\wedge\ldots\wedge[j] C_n) \to \bigvee_{i \in Ag}Prove(i,B)
$$
for some $[j] C_1,\ldots,[j]C_n \in \Gamma$, whence, since $[j]$
is an S5 modality, we would get
$$
\vdash ([j]C_1\wedge\ldots\wedge[j]C_n) \to [j](\bigvee_{i \in Ag}Prove(i,B)),
$$
which would mean that $[j](\bigvee_{i \in Ag}Prove(i,B)) \in \Gamma$. On the other hand,
since

\noindent$Prove(j,B) \notin \Gamma$, this means, by
maxiconsistency of $\Gamma$ in $X$, that $\neg Prove(j,B) \in
\Gamma$, whence, again by maxiconsistency and \eqref{A13}, we
obtain that $\langle j\rangle(\bigwedge_{i \in Ag}\neg Prove(i,B))
\in \Gamma$. Therefore, by maxiconsistency of $\Gamma$ in $X$, we
must have $\neg [j](\bigvee_{i \in Ag}Prove(i,B)) \in \Gamma$, a
contradiction.

Therefore, $\Xi$ is consistent and we can extend $\Xi$ to a set
$\Delta \subseteq Form_X$ which is maxiconsistent in $X$. Of
course, in this case we will have $Prove(i,B) \notin \Delta$ for
all $i \in Ag$. We will have then that $(\Delta)\alpha$ is an
element, and, arguing as in Case 2.1 we can show \eqref{E:t1} so
that $\Delta$ contains all boxed formulas from $\Gamma$.
Therefore, by definition of $\equiv$, we know that $(\Gamma)\alpha
\equiv (\Delta)\alpha$. By Lemma \ref{hist}.2, we know that for
some $h' \in H_m$ we will have $(\Delta)\alpha = m \cap h'$ and,
therefore, $\Delta = end(m \cap h')$. Also, since $\Delta$
contains all the $[j]$-modalized formulas from $\Gamma$, we know
that for any such $h'$ we will have $h' \in Choice^m_j(h)$. We
also know that $Proven(B) \notin \Delta$, for otherwise we would
have, by maxiconsistency of $\Delta$ and Lemma \ref{theorems}.2,
that $\Box Proven(B) \in \Delta$, whence, by the fact that
$(\Gamma)\alpha \equiv (\Delta)\alpha$ we would have that
$Proven(B) \in \Gamma$, contradicting our assumptions.

Consider then $Act(m,h')$. We may assume that
$\alpha = \uparrow$, the reasoning for the case when $\alpha = \downarrow$
is similar. We have, by definition of $Act$ that:
$$
Act(m,h') = \{ z \} \cup \{ y_{(i, C)} \mid Prove(i, C) \wedge \neg\Box Prove(i, C)
    \in \Delta \} \cup \{ u_C \mid \Box Prove(i, C) \in \Delta
    \}.
$$
We know that $B \notin \mathcal{E}(m, z)$, since we have
established that $Proven(B) \notin \Delta$; we also know that if
$u_C, y_{(i, C)} \in Act(m,h')$ for any $i \in Ag$, then $C \neq
B$ since for all $i \in Ag$ we have $Prove(i,B) \notin \Delta$,
and this means that if $u_C, y_{(i, C)} \in Act(m,h')$ for any $i
\in Ag$, then both $B \notin \mathcal{E}(m, u_C)$ and $B \notin
\mathcal{E}(m, y_{(i, C)})$. Therefore, at $(m,h')$ there exists
no presented proof which would be acceptable for $B$, and since
$h' \in Choice^m_j(h)$, this means that the positive condition for
$Prove(j,B)$ at $(m,h)$ is violated, so that we get
$\mathcal{M},m,h \not\models Prove(j,B)$ as desired.

\emph{Case 6.4}. The length of $m$ is greater than $1$. Then, by
Lemma \ref{intersections}.2, for all $h' \in H_m$ we have that
$$
Act(m,h') = \bigcap_{h'' \in H_m}Act(m,h'').
$$
Assume then, that we have both $s \in Act(m,h)$ and $\mathcal{M},m,h \models s\co B$ for some $s \in Pol$. Then $s \in \bigcap_{h'' \in H_m}Act(m,h'')$,
which means that the negative condition for $Prove(j,B)$ at $(m,h)$ is violated and we must have
$\mathcal{M},m,h \not\models Prove(j,B)$. Assume, on the contrary, that there is no
$s \in Pol$ for which both $s \in Act(m,h)$ and $\mathcal{M},m,h \models s\co B$.
 Then, since $h$ is of course in $Choice^m_j(h)$, it turns out that the positive condition
 for $Prove(j,B)$ at $(m,h)$ is violated and again have
$\mathcal{M},m,h \not\models Prove(j,B)$. So, in any case $\mathcal{M},m,h \not\models Prove(j,B)$,
as desired.

This finishes the list of the modal induction cases at hand, and
thus the proof of our truth lemma is complete.
\end{proof}

\section{The main result}\label{main}

We are now in a position to prove Theorem \ref{completeness}. The
proof proceeds as follows. One direction of the theorem was proved
as Corollary \ref{c-soundness}. In the other direction, assume
that $\Gamma \subseteq Form_X$ is consistent. Then, by Lemma
\ref{elementaryconsistency}.1, $\Gamma$ can be extended to a
$\Delta$ which is maxiconsistent in $X$. But then choose an
arbitrary $\alpha \in \{ \uparrow, \downarrow \}$ and consider
$\mathcal{M} = \langle Tree, \leq, Choice, Act, R, \mathcal{E},
V\rangle$, the canonical model defined in Section
\ref{canonicalmodel}. The structure $(\Delta)\alpha$ is an
element, therefore $[(\Delta)\alpha]_\equiv \in Tree$. By Lemma
\ref{hist}.2, there is a history $h \in H_m$ such that
$(\Delta)\alpha = [(\Delta)\alpha]_\equiv \cap h$. For this $h$,
we will also have $\Delta = end([(\Delta)\alpha]_\equiv \cap h)$.
By Lemma \ref{truth}, we therefore get that:
$$
\mathcal{M}, [(\Delta)\alpha]_\equiv, h \models \Delta \supseteq
\Gamma,
$$
and thus $\Gamma$ is shown to be satisfiable in a normal jstit
model.

\textbf{Remark}. Note that the canonical model used in this proof
is $X$-universal in the sense that it satisfies every consistent
subset of $Form_X$.

As an obvious corollary of Theorem \ref{completeness} we get the
following weak completeness result:

\begin{corollary}\label{weakcompleteness}
For every $A \in Form$, $\vdash A$ iff $A$ is valid over normal
jstit models.
\end{corollary}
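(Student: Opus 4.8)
The plan is to derive the corollary directly from the two global results already in hand, namely Theorem~\ref{soundness} (soundness) and Theorem~\ref{completeness} (the restricted completeness theorem), treating the two directions of the biconditional separately. The left-to-right direction is essentially immediate: if $\vdash A$, then $A$ sits at the end of some finite proof, and by Theorem~\ref{soundness} every instance of \eqref{A0}--\eqref{A13} is valid over normal jstit models while each of \eqref{R1}--\eqref{R4} preserves such validity; a routine induction on the length of the proof then shows that $A$ itself is valid over normal jstit models. So only the right-to-left direction requires genuine work, and I would handle it by contraposition.

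For the completeness direction, I would assume $\not\vdash A$ and produce a normal jstit model refuting $A$. The first step is to observe that $\{ \neg A \}$ is consistent in the sense of our relativized definition: were it inconsistent, we would have $\vdash \neg A \to \bot$, and hence $\vdash A$ by \eqref{A0} and \eqref{R1}, contrary to assumption. The second step is to pick the right parameter set $X$. Since $A$, and therefore $\neg A$, contains only finitely many proof variables, I can choose $X \subseteq PVar$ so that every proof variable occurring in $A$ lies in $X$ while $PVar \setminus X$ remains countably infinite; this is possible precisely because $PVar$ is countably infinite and $A$ consumes only finitely many of its members. With this choice we have $\neg A \in Form_X$, so $\{ \neg A \} \subseteq Form_X$ is a consistent set in $X$ to which Theorem~\ref{completeness} applies. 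It follows that $\{ \neg A \}$ is satisfiable in a normal jstit model, i.e. $\mathcal{M}, m, h \models \neg A$ for some normal $\mathcal{M}$ and some $(m,h) \in MH(\mathcal{M})$, whence $\mathcal{M}, m, h \not\models A$ and $A$ is not valid over the class of normal jstit models. Contraposing yields that validity of $A$ entails $\vdash A$.

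I do not expect any serious obstacle here, as the corollary is a routine specialization of the main theorem to singletons. The only point deserving explicit care is the coordination between the hypothesis of Theorem~\ref{completeness} and the object $A$: the theorem is stated only for $\Gamma \subseteq Form_X$ with $PVar \setminus X$ countably infinite, so I must verify that such an $X$ can simultaneously accommodate all proof variables of $A$ and leave a countably infinite complement. The finiteness of the set of proof variables occurring in a single formula is exactly what makes this compatible, and this is the one observation I would flag explicitly rather than leave implicit.
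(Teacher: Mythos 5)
Your proposal is correct and takes essentially the same route as the paper's own proof: soundness (Theorem~\ref{soundness}) gives one direction, and for the other direction the paper likewise notes that $\not\vdash A$ makes $\{\neg A\}$ consistent, takes $X$ to be the (finite) set of proof variables occurring in $A$ so that $PVar \setminus X$ is countably infinite, and applies Theorem~\ref{completeness}. Your only deviations are cosmetic: you spell out the induction behind the soundness direction and allow any sufficiently large $X$ rather than exactly the variables of $A$, neither of which changes the argument.
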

\begin{proof}
One direction follows from Theorem \ref{soundness}. In the other
direction, if $\not\vdash A$, then $\{ \neg A \}$ is consistent.
Setting $X$ to be the set of proof variables occurring in $A$, we
see that $PVar \setminus X$ must be countably infinite. Therefore,
Theorem \ref{completeness} applies, $\{ \neg A \}$ must be
satisfied in some normal jstit model, and $A$ cannot be valid.
\end{proof}
As a further corollary, we deduce a restricted form of compactness
property:
\begin{corollary}\label{compactness}
Let $X \subseteq PVar$ be such that $PVar \setminus X$ is
countably infinite. Then an arbitrary $\Gamma \subseteq Form_X$ is
satisfiable iff every finite $\Gamma_0 \subseteq \Gamma$ is
satisfiable.
\end{corollary}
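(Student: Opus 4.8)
The plan is to derive this restricted compactness directly from Theorem \ref{completeness}, exploiting the fact that the notion of consistency adopted in this paper is, by its very definition, finitary. The left-to-right direction is immediate and needs no completeness machinery: if $\Gamma$ is satisfiable, say $\mathcal{M}, m, h \models \Gamma$, then every subset of $\Gamma$ — finite or not — is satisfied at the very same $(m,h)$, so in particular every finite $\Gamma_0 \subseteq \Gamma$ is satisfiable.

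For the converse I would route the argument through consistency. Suppose every finite $\Gamma_0 \subseteq \Gamma$ is satisfiable. Since each such $\Gamma_0$ is a subset of $Form_X$ and $PVar \setminus X$ is countably infinite (the hypothesis of the corollary), Corollary \ref{c-soundness} applies to each of them and yields that every finite $\Gamma_0$ is consistent. Now I would invoke the definition of inconsistency from Section \ref{axioms}: $\Gamma$ is inconsistent iff $\vdash (A_1 \wedge\ldots\wedge A_n) \to \bot$ for some $A_1,\ldots,A_n \in \Gamma$. Were $\Gamma$ inconsistent, this would be witnessed by the finite subset $\{ A_1,\ldots,A_n \} \subseteq \Gamma$, which would itself be inconsistent, contradicting the previous sentence. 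Hence $\Gamma$ is consistent. Finally, applying Theorem \ref{completeness} — whose hypothesis that $PVar \setminus X$ be countably infinite is exactly the one assumed here — the consistency of $\Gamma \subseteq Form_X$ delivers its satisfiability in a normal jstit model, as required.

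The main point, and the only real subtlety, is the observation that the somewhat unusual, provability-based definition of (in)consistency introduced in Section \ref{axioms} is already finitary: inconsistency is certified by a single provable implication out of a finite conjunction, so consistency passes automatically from all finite subsets of a set to the set itself. Consequently there is no genuine obstacle here — no model-gluing, ultraproduct, or further canonical-model argument is needed — because all the heavy lifting was already carried out in the canonical model construction of Section \ref{canonicalmodel} and packaged into Theorem \ref{completeness}. I would therefore present this corollary as a short two-step deduction (finite satisfiability $\Rightarrow$ finite consistency $\Rightarrow$ consistency $\Rightarrow$ satisfiability), making explicit only the finitary-consistency step to keep the reader from expecting the usual compactness-theorem apparatus.
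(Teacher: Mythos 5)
Your proof is correct and follows essentially the same route as the paper: both directions hinge on the finitary definition of consistency, soundness, and Theorem \ref{completeness}. The only cosmetic difference is that you pass through Corollary \ref{c-soundness} applied to each finite subset, whereas the paper applies Theorem \ref{soundness} directly to the purported proof of $(A_1 \wedge\ldots\wedge A_n) \to \bot$ to conclude that finite subset would be unsatisfiable --- the same argument packaged slightly differently.
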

\begin{proof}
If $\Gamma$ is satisfiable, then clearly every finite $\Gamma_0
\subseteq \Gamma$ is satisfiable. On the other hand, if every
finite $\Gamma_0 \subseteq \Gamma$ is satisfiable, then for no
$A_1,\ldots, A_n \in \Gamma$ can we have that $\vdash (A_1
\wedge\ldots \wedge A_n) \to \bot$, for otherwise, by Theorem
\ref{soundness}, the finite set $\{ A_1,\ldots, A_n \}$ would be
unsatisfiable. Therefore, $\Gamma$ must be consistent, and, by
Theorem \ref{completeness}, also satisfiable.
\end{proof}

\section{Conclusions and future research}\label{conclusion}

Theorem \ref{completeness}, the main result of this paper, proves
what might be called a restricted strong completeness theorem for
the implicit jstit logic. As we have shown in Section \ref{main},
this means, among other things, that this logic allows for a
finitary proof system and enjoys a restricted form of compactness
property. Taken together, these results show that, given the rich
variety of expressive means present in the implicit jstit logic
and non-trivial semantic constraints imposed on its models, this
logic displays a surprising degree of regularity.

Of course, the results of the present paper give room to some
generalization. One obvious observation would be that the rule
\eqref{R3} gives but one variant out of the infinite family of the
so-called \emph{constant specifications} allowed for in
justification logic; and it is straightforward to see that the
above completeness proof can be easily adapted for the systems
with other versions of constant specification. The other obvious
direction of generalizing the results above would be to relieve
the restriction that $R = R_e$ and consider the semantics of
\cite{OLWA} in its full generality, although, as we have already
mentioned, it is not so clear whether this generalization will
affect the set of validities.

In the broader perspective, Theorem  \ref{completeness} is a step
towards axiomatization of the full basic justification stit logic
in case such an axiomatization is possible. Viewing Theorem
\ref{completeness} as a partial success in axiomatizing the full
basic jstit logic, it is easy to see which steps shall come next.
First, one needs to understand the mechanics behind the proving
modalities omitted from the implicit jstit logic and axiomatize
the logic of $Prove(j,t,A)$ and $Proven(t,A)$ placed on top of
stit and justification modalities; then an axiomatization of a
system combining both explicit and implicit proving modalities and
their interplay may turn out to be possible. As a promising
further step in this direction, one can consider, for example, the
logic of the so-called $E$-notions, introduced in \cite{OLWA2}. It
allows one to define a combination of implicit and explicit
proving modalities, even though this combination is but a subset
of the variety of proving modalities definable within the full
basic jstit logic, and can, therefore, provide a demo version of
the problems to be encountered in an attempt to explore the
properties of the full system.

\section{Acknowledgements}
To be inserted.

}

\end{document}